\documentclass[reqno]{amsart}
\textwidth=5in
\textheight=7.5in
\usepackage{amssymb}

\usepackage{tikz}

\usepackage[colorlinks=true]{hyperref}
\hypersetup{urlcolor=blue, citecolor=red}

\parindent0pt

\newtheorem{Theorem}{Theorem}[section]
\newtheorem{Corollary}[Theorem]{Corollary}
\newtheorem{Lemma}[Theorem]{Lemma}
\newtheorem{Proposition}[Theorem]{Proposition}
\newtheorem{Remark}[Theorem]{Remark}
\numberwithin{equation}{section}

\def\C {\mathbb C}

\def\R {\mathbb R}

\newcommand{\supp}{\operatorname{supp}}
\newcommand{\<}{\langle}
\renewcommand{\>}{\rangle}
\renewcommand{\(}{\left(}
\renewcommand{\)}{\right)}

\newcommand{\lesim}{\lesssim}
\renewcommand{\div}{\operatorname{div}}
\newcommand{\de}[2]{\frac{\partial #1}{\partial #2}}
\newcommand{\p}{\partial}
\newcommand{\Vol}{\operatorname{Vol}}
\newcommand{\sgn}{\operatorname{sgn}}
\newcommand{\id}{\operatorname{Id}}
\newcommand{\dist}{\operatorname{dist}}

\begin{document}
\title[Reconstruction in the partial data Calder\'on problem]{Reconstruction in the partial data Calder\'on problem on admissible manifolds}

\author[Yernat M. Assylbekov]{Yernat M. Assylbekov}
\address{Department of Mathematics, University of Washington, Seattle, WA 98195-4350, USA}
\email{y\_assylbekov@yahoo.com}

\maketitle

\begin{abstract}
We consider the problem of developing a method to reconstruct a potential $q$ from the partial data Dirichlet-to-Neumann map for the Schr\"odinger equation $(-\Delta_g+q)u=0$ on a fixed admissible manifold $(M,g)$. If the part of the boundary that is inaccessible for measurements satisfies a flatness condition in one direction, then we reconstruct the local attenuated geodesic ray transform of the one-dimensional Fourier transform of the potential $q$. This allows us to reconstruct $q$ locally, if the local (unattenuated) geodesic ray transform is constructively invertible. We also reconstruct $q$ globally, if $M$ satisfies certain concavity condition and if the global geodesic ray transform can be inverted constructively. These are reconstruction procedures for the corresponding uniqueness results given by Kenig and Salo \cite{KSa}. Moreover, the global reconstruction extends and improves the constructive proof of Nachman and Street \cite{NaS} in Euclidean setting. We derive a certain boundary integral equation which involves the given partial data and describes the traces of complex geometrical optics solutions. For construction of complex geometrical optics solutions, following \cite{NaS} and improving their arguments, we use a new family of Green's functions for the Laplace-Beltrami operator and the corresponding single layer potentials. The constructive inversion problem for local or global geodesic ray transforms is one of the major topics of interest in integral geometry.
\end{abstract}

\section{Introduction}
In 1980, Alberto Calder\'on \cite{Cal} proposed the problem whether one can determine the electrical conductivity of a medium from voltage and current measurements at the boundary. In the mathematical literature, this problem is known as Calder\'on's inverse conductivity problem.
\medskip

The Calder\'on's problem can be reduced to the problem of determining electric potential $q$ from the Dirichlet-to-Neumann map associated to the Schr\"odinger operator $-\Delta+q$. We will first discuss the case of Euclidean space in dimension $n\ge 3$. In the fundamental paper by Sylvester and Uhlmann \cite{SyU} it was shown that bounded potential in a bounded domain of Euclidean space can be uniquely determined from the knowledge of the Dirichlet-to-Neumann map. Since then, substantial progress has been achieved on Calder\'on's problem. Then corresponding reconstruction procedure was given by Nachman \cite{Na} and independently by Novikov \cite{No}. The reader is referred to recent expository paper by Uhlmann \cite{U} for a survey of progress made on Calder\'on's problem.
\medskip

In the current paper we are interested in the case when the Dirichlet-to-Neumann map is known only on part of the boundary. Let $\Gamma_+$ and $\Gamma_-$ be the open subsets of the boundary where Dirichlet data inputs are prescribed and Neumann data measurements are made. The first result is due to Bukhgeim and Uhlmann \cite{BU}. They prove unique determination result if $\Gamma_+$ and $\Gamma_-$ are roughly complementary and slightly more than half of the boundary. This result has been improved significantly by Kenig, Sj\"ostrand and Uhlmann \cite{KSU} where they show that bounded potential can be uniquely recovered if $\Gamma_-$ possibly very small open subset of the boundary, but $\Gamma_+$ must be slightly larger than the complement of $\Gamma_-$ in the boundary. Constructive proof of this result is given by Nachman and Street \cite{NaS}. For recent results on Calder\'on's inverse problem with partial data, see \cite{KSa-survey}. The approaches of \cite{BU,KSU,NaS} are based on Carleman estimates with boundary terms. 
\medskip

There is a result by Isakov \cite{Is} where he gives uniqueness result when $\Gamma_-=\Gamma_+=\Gamma$ and the inaccessible part of the boundary for measurements is either part of a hyperplane or part of a sphere. This work is based on a reflection argument.
\medskip

In the current paper we consider partial data Calder\'on's problem on manifolds. The methods of \cite{KSU,Is} were unified and extended to so-called admissible manifolds (which will be described below) by Kenig and Salo \cite{KSa} obtaining improved results. To appreciate these improvements, the reader is referred to \cite[Section~3]{KSa} for detailed corresponding discussions. The goal of this paper, is to give the reconstruction procedures to the corresponding results of \cite{KSa}.
\medskip

Let us give the precise mathematical formulation of the problem. Let $(M,g)$ be a compact oriented Riemannian manifold with boundary. Following Bukhgeim and Uhlmann \cite{BU}, we work with the following Hilbert space which is the largest domain of the Laplace-Beltrami operator $\Delta_g$:
$$
H_{\Delta_g}(M)=\{u\in L^2(M):\Delta_g u\in L^2(M)\}.
$$
The trace maps $\mathtt{tr}(u)=u|_{\p M}$ and $\mathtt{tr}_\nu (u)=\de{u}{\nu}\big|_{\p M}$ defined on $C^\infty(M)$ have extensions to a bounded operators $H_{\Delta_g}(M)\to H^{-1/2}(\p M)$ and $H_{\Delta_g}(M)\to H^{-3/2}(\p M)$, respectively; see Proposition~\ref{extensions of the trace operators}.
\medskip

Now we introduce the following space on the boundary $\p M$:
$$
\mathcal H_g(\p M)=\{\mathtt{tr}(u):u\in H_{\Delta_g}(M)\}\subset H^{-1/2}(\p M).
$$
The topology on $\mathcal H_g(\p M)$ is defined in Section~\ref{S2}, right before Proposition~\ref{solvability of the Dirichlet problem}. Under this topology, the operator $\mathtt{tr}:H_{\Delta_g}(M)\to \mathcal H_g(\p M)$ is bounded.
\medskip

We consider the closely related problem for Schr\"odinger operators. Suppose that $q\in L^\infty(M)$ such that $0$ is not a Dirichlet eigenvalue of $-\Delta_g+q$ in $M$. Then for $f\in\mathcal H_g(\p M)$, the Dirichlet problem has a unique solution $u\in H_{\Delta_g}(M)$
\begin{align*}
(-\Delta_g+q)u&=0\quad\text{\rm in}\quad M,\\
\mathtt{tr}(u)&=f\quad\text{\rm on}\quad \p M.
\end{align*}
The Dirichlet-to-Neumann map is defined by
$$
\Lambda_{g,q}(f)=\mathtt{tr}_\nu(u).
$$
By the results of Section~\ref{S2}, $\Lambda_{g,q}$ is a bounded linear operator $\Lambda_{g,q}:\mathcal H_g(\p M)\to H^{-3/2}(\p M)$. Given two open subsets $\Gamma_-,\Gamma_+\subset \p M$. The partial data inverse problem is to determine $q$ from the knowledge of $\Lambda_{g,q}f$ on ${\Gamma_-}$ for all $f\in \mathcal H_g(\p M)$ supported in $\Gamma_+$.
\medskip

We need to introduce the notion of admissible manifolds. 
\medskip

{\bf Definition.} A compact Riemannian manifold $(M,g)$ with boundary of dimension $n\ge 3$, is said to be \emph{admissible} if it is conformal to a submanifold with boundary of $\R\times (M_0,g_0)$ where $(M_0,g_0)$ is simple $(n-1)$-dimensional manifold. By simplicity of $(M_0,g_0)$ we mean that the boundary $\p M_0$ is strictly convex, and for any point $x\in M_0$ the exponential map $\exp_x$ is a diffeomorphism from its maximal domain in $T_x M_0$ onto $M_0$.
\medskip

Compact submanifolds of Euclidean space, the sphere minus a point and of hyperbolic space are all examples of admissible manifolds.
\medskip

If $(M,g)$ is admissible, points of $M$ can written as $x=(x_1,x')$, where $x_1$ is the Euclidean coordinate. We define
\begin{align*}
\p M_\pm&=\{x\in\p M:\pm\p_\nu\varphi(x)>0\},\\
\p M_{\rm tan}&=\{x\in\p M:\p_\nu\varphi(x)=0\},
\end{align*}
where $\varphi(x)=x_1$. The function $\varphi$ is a natural limiting Carleman weight in $(M,g)$; see \cite{DKSU}. In the results below we assume that there is a part which inaccessible for measurements $\Gamma_{\rm i}\subset \p M_{\rm tan}$, and the accessible part will be denoted by $\Gamma_{\rm a}=\p M_{\rm tan}\setminus \Gamma_{\rm i}$.
\medskip

We say that a unit speed geodesic $\gamma:[0,T]\to M_0$, on transversal simple manifold $(M_0,g_0)$, is \emph{nontangential} if $\gamma(0),\gamma(T)\in\p M_0$ and $\gamma(t)\in M_0^{\rm int}$ if $t\in(0,T)$.
\medskip

The first main result of our paper, says that one can reconstruct the local attenuated geodesic ray transform of the one-dimensional Fourier transform (with respect to $x_1$-variable) of the potential $q$ from the partial knowledge of the Dirichlet-to-Neumann map with $\Gamma_+\supset\p M_+\cup \Gamma_{\rm a}$ and $\Gamma_-\supset\p M_-\cup \Gamma_{\rm a}$.

\begin{Theorem}\label{main th}
Let $(M,g)$ be an admissible manifold, and suppose that $q\in C(M)$ such that $0$ is not a Dirichlet eigenvalue of $-\Delta_g+q$. Let $\Gamma_{\rm i}\subset\p M_{\rm tan}$ be closed such that for some open $E\subset \p M_0$ one has
$$
\Gamma_{\rm i}\subset \R\times(\p M_0\setminus E).
$$
Let $\Gamma_{\rm a}=\p M_{\rm tan}\setminus \Gamma_{\rm i}$ and let $\Gamma_\pm\subset \p M$ be a neighborhood of $\p M_\pm\cup \Gamma_{\rm a}$. Then for any given nontangential geodesic $\gamma:[0,T]\to M_0$ with endpoints on $E$ and for any $\lambda\in\R$, the integral
$$
\int_0^T e^{-2\lambda t}\widehat{(cq)}(2\lambda,\gamma(t))\,dt
$$
can be constructively recovered from the knowledge of $\Lambda_{g,q}(f)$ on $\Gamma_-$ for all $f\in \mathcal H_{g}(\p M)$ supported in $\Gamma_+$. Here $q$ is extended outside of $M$ by zero, and $\widehat{(cq)}$ is the one-dimensional Fourier transform of $q$ with respect to $x_1$-variable.
\end{Theorem}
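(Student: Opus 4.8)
The plan is to carry out, for admissible manifolds, the reconstruction scheme of Nachman and Street \cite{NaS}, built on the complex geometrical optics (CGO) solutions of \cite{DKSU} and the Gaussian beam quasimodes on $(M_0,g_0)$ from \cite{KSa}.

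\emph{CGO solutions and the target limit.} Write $g=c(e\oplus g_0)$ with $e=dx_1^2$. Conjugating $-\Delta_g$ by the conformal factor $c^{(n-2)/4}$ reduces the CGO construction to the product metric $e\oplus g_0$, at the cost of an explicit conformal correction of the potential which is the same for the $q$-problem and for the reference problem, and hence cancels in the identity below. For $s=\tau+i\lambda$ with $\tau>1$ large, I would build two families of CGO solutions in $H_{\Delta_g}(M)$,
\[
u_1=c^{-(n-2)/4}e^{-sx_1}(v_1+r_1),\qquad (-\Delta_g+q)u_1=0,
\]
\[
u_2=c^{-(n-2)/4}e^{\bar s x_1}(v_2+r_2),\qquad (-\Delta_g)u_2=0,
\]
where $v_1,v_2$ are Gaussian beam amplitudes on $(M_0,g_0)$ concentrating on the nontangential geodesic $\gamma$ and carrying the attenuation factor, $r_j\to0$ in $H^1(M)$ as $\tau\to\infty$, and $u_1,u_2$ are labelled (interchanging them if necessary) so that $\mathtt{tr}(u_1)$ is essentially supported in $\Gamma_+$ and $\mathtt{tr}(u_2)$ in $\Gamma_-$; taking the reference potential to be $0$ is legitimate because the first Dirichlet eigenvalue of $-\Delta_g$ is positive. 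In the product, $e^{-sx_1}e^{\bar s x_1}=e^{-2i\lambda x_1}$ is independent of $\tau$, so that $\tau\to\infty$ acts only in the transversal variables; since the conformal prefactors combine with $dV_g$ to turn $q$ into $cq$, and $v_1v_2\,dV_{g_0}$ concentrates on $\gamma$ with density $e^{-2\lambda t}\,dt$, one gets
\[
\int_M q\,u_1u_2\,dV_g\ \longrightarrow\ \int_0^T e^{-2\lambda t}\,\widehat{(cq)}(2\lambda,\gamma(t))\,dt\qquad(\tau\to\infty).
\]

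\emph{Reduction to a boundary integral equation.} By Green's identity (in its bilinear form, so complex $q$ is admitted) one has $\int_M q\,u_1u_2\,dV_g=\langle(\Lambda_{g,q}-\Lambda_{g,0})\mathtt{tr}(u_1),\mathtt{tr}(u_2)\rangle_{\p M}$, where $\Lambda_{g,0}$, the family $u_2$ and its trace are all computable from the known $(M,g)$; the only unknown on the right is $\mathtt{tr}(u_1)$. To recover it from the partial data I would, following \cite{NaS}, introduce a new family of Green's functions $G_s$ for $-\Delta_g$ adapted to the weight $e^{-sx_1}$ together with the associated single layer potentials $S_s$, and obtain---after inserting cutoffs supported near $\Gamma_\pm$, so that only $\Lambda_{g,q}|_{\Gamma_-}$ applied to inputs supported in $\Gamma_+$ enters---a boundary integral equation of Fredholm second kind, schematically
\[
\mathtt{tr}(u_1)+S_s\,(\Lambda_{g,q}-\Lambda_{g,0})\,\mathtt{tr}(u_1)=\bigl(\text{explicit CGO boundary term}\bigr),
\]
uniquely solvable for $\tau$ large by $\tau$-uniform mapping estimates for $S_s$. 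Inserting the solution into Green's identity and letting $\tau\to\infty$ then yields the theorem, modulo the estimates discussed next.

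\emph{The main obstacle.} The crux, and where the hypothesis $\Gamma_{\rm i}\subset\R\times(\p M_0\setminus E)$ is indispensable, is to show that every contribution from the inaccessible part $\p M\setminus(\Gamma_+\cup\Gamma_-)\subset\Gamma_{\rm i}$ is negligible as $\tau\to\infty$, both in the boundary integral equation and in the final passage to the limit. I would handle this by combining the Carleman estimate with boundary terms---whose boundary contribution has a good sign for $u_1$ on $\p M_+$ and for $u_2$ on $\p M_-$, but vanishes on $\p M_{\rm tan}$---with a reflection across $\p M_0\setminus E$, available precisely because $\Gamma_{\rm i}$ is a cylinder in the $x_1$-direction so that the Carleman weight $\varphi=x_1$ is left invariant; this allows one to extend $(M_0,g_0)$ to a slightly larger simple manifold, extend $q$ by zero, and place the Gaussian beams on geodesics of the extension whose endpoints lie in $E$ and whose interiors avoid $\p M_0\setminus E$. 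The companion technical point---and the place where the Euclidean arguments of \cite{NaS} must be strengthened---is establishing the $\tau$-uniform mapping properties of $G_s$ and $S_s$ on the manifold that make the Fredholm equation well posed. Granting these, $\mathtt{tr}(u_1)$ is reconstructed, and the Gaussian beam concentration on $\gamma$ together with the $e^{-2i\lambda x_1}$ oscillation extracts $\int_0^T e^{-2\lambda t}\widehat{(cq)}(2\lambda,\gamma(t))\,dt$ as the limit of the now computable pairing $\langle(\Lambda_{g,q}-\Lambda_{g,0})\mathtt{tr}(u_1),\mathtt{tr}(u_2)\rangle_{\p M}$.
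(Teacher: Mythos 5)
Your overall architecture is the same as the paper's: two families of CGO solutions, $u_1$ solving $(-\Delta_g+q)u_1=0$ with trace supported in $\Gamma_+$ and a harmonic $u_2$ with trace supported in $\Gamma_-$; the integral identity $\<\mathtt{tr}(u_2),(\Lambda_{g,q}-\Lambda_{g,0})\mathtt{tr}(u_1)\>=(u_2|qu_1)_{L^2(M)}$; weighted Green's operators $G_\tau$ and single layer potentials $S_\tau$ in the Nachman--Street style, with $\tau$-uniform bounds from the Carleman estimate with boundary terms; the second-kind boundary integral equation $(\id+S_\tau(\Lambda_{g,q}-\Lambda_{g,0}))\mathtt{tr}(u_1)=\mathtt{tr}(u_1')$, solvable for large $\tau$ by a Neumann series; and the concentration limit extracting the attenuated transform along $\gamma$. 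Whether the amplitude is a Gaussian beam or, as in the paper, the explicit DKSU amplitude $e^{-i\tau r}|g|^{-1/4}c^{1/2}e^{i\lambda(x_1+ir)}b(\theta)$ in global polar coordinates with $b$ shrinking to $\theta_0$ is immaterial. Also, the partial-data structure of the equation comes for free from the support properties of $\mathtt{tr}\circ G_\tau$ (hence of $S_\tau$), not from inserted cutoffs, but that is a cosmetic difference.

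The genuine gap is in how you deal with the inaccessible set $\Gamma_{\rm i}$. First, ``essentially supported'' is not good enough: the data consist of $\Lambda_{g,q}f|_{\Gamma_-}$ only for $f$ supported in $\Gamma_+$, so if $\mathtt{tr}(u_1)$ carries any mass outside $\Gamma_+$, both the boundary integral equation and the final pairing involve quantities you were never given, and no smallness as $\tau\to\infty$ can substitute for missing data. The paper arranges \emph{exact} support: since $\gamma$ has endpoints in $E$, the angular factor $b$ is chosen supported near $\theta_0$ so that the amplitude $a$ vanishes near $\R\times(\p M_0\setminus E)\supset\Gamma_{\rm i}$, and the remainder $r_0$ is produced by a new solution operator $R_\tau$ (Section 7) which permits prescribing $\mathtt{tr}(r_0)$ on $S^-_\tau$ -- equal to $-a$ on the bad part and $0$ elsewhere -- with norm controlled via the splitting $S^-_\tau=S^-_{\tau,\delta}\cup S^0_{\tau,\delta}$ in the Carleman estimate; this yields exact support in $\Gamma_{\sgn(\tau)}$ together with $\|r_0\|_{L^2(M)}\to0$. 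Second, the mechanism you propose for this step, a reflection across $\p M_0\setminus E$, is not available: a general simple transversal manifold admits no metric-preserving reflection across an arbitrary boundary portion (this is precisely why Isakov's argument needs the inaccessible part to lie in a hyperplane or sphere, and why Kenig--Salo replace reflection by Carleman estimates with boundary terms plus the support choice of the amplitude). The $x_1$-cylindrical structure of $\Gamma_{\rm i}$ only gives $\p_\nu\varphi=0$ there, so that this region contributes no boundary term to the Carleman estimate and enters only through the measure of $S^0_{\tau,\delta}$; it does not produce an extension of $(M_0,g_0)$ by reflection across $\p M_0\setminus E$. As written, the step ``extend by reflection and place the beams on the extension'' has no foundation, and with it the claimed negligibility (in fact, the needed exact vanishing) of the $\Gamma_{\rm i}$-contributions; replacing it by the prescribed-trace construction via $R_\tau$ is exactly what the paper's proof supplies.
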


This is a constructive version of the corresponding uniqueness result by Kenig and Salo \cite[Theorem~2.1]{KSa}.
\medskip

In the next result, we consider the local geodesic ray transform $I_O$ in an open subset $O$ of the transversal simple manifold $(M_0,g_0)$ which is defined for $f\in C(M_0)$ as
$$
I_Of(\gamma):=\int_\gamma f(\gamma(t))\,dt,\quad \gamma\text{ is a nontangential geodesic contained in }O.
$$
We say that $I_O$ is \emph{constructively invertible} in $O$, if any $f\in C(M_0)$ can be recovered in $O$ from the knowledge of $I_Of$.
\medskip

Using Theorem~\ref{main th} one can constructively recover potentials in the set where the local geodesic ray transform is invertible.

\begin{Theorem}\label{main th2}
Suppose that $(M,g)$, $q\in C(M)$, $E\subset\p M_0$ and $\Gamma_\pm$ are as in Theorem~\ref{main th}. Assume that $O\subset M_0$ is open such that $O\cap \p M_0\subset E$ and the local ray transform is constructively invertible on $O$. Then $q$ can be constructively determined in $M\cap(\R\times M_0)$ from the knowledge of $\Lambda_{g,q}(f)$ on $\Gamma_-$ for all $f\in \mathcal H_{g}(\p M)$ supported in $\Gamma_+$.
\end{Theorem}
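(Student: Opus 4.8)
The plan is to feed the conclusion of Theorem~\ref{main th} into the constructive inversion of the local ray transform $I_O$. The only genuinely new issue is that Theorem~\ref{main th} produces the \emph{attenuated} geodesic ray transform of $\widehat{(cq)}(2\lambda,\cdot)$, while $I_O$ inverts only the \emph{unattenuated} transform; this gap will be closed by stripping off the attenuation one order at a time in a Taylor expansion in $\lambda$ about $\lambda=0$. Since $(M,g)$ is a fixed, known admissible manifold, the conformal diffeomorphism identifying $(M,g)$ with a submanifold of $\R\times(M_0,g_0)$ and the conformal factor $c>0$ are known. Extend $q$ by zero; then $cq$ is bounded and supported in $\{|x_1|\le R\}$ for a known $R$, so for each $x'\in M_0$ the function $\lambda\mapsto\widehat{(cq)}(2\lambda,x')$ extends to an entire function of exponential type at most $R$, and $\p_\lambda^k\widehat{(cq)}(2\lambda,\cdot)=\widehat{((-2ix_1)^kcq)}(2\lambda,\cdot)\in C(M_0)$ for every $k\ge 0$. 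Write $v_k:=\p_\lambda^k\widehat{(cq)}(2\lambda,\cdot)\big|_{\lambda=0}\in C(M_0)$. Since $O\cap\p M_0\subset E$, every nontangential geodesic $\gamma:[0,T]\to M_0$ contained in $O$ has both endpoints on $E$, so Theorem~\ref{main th} allows one to constructively compute, for every such $\gamma$ and every $\lambda\in\R$,
\[
A(\lambda,\gamma):=\int_0^T e^{-2\lambda t}\,\widehat{(cq)}(2\lambda,\gamma(t))\,dt .
\]
Differentiating under the integral sign in $\lambda$ (legitimate because $q$ has compact $x_1$-support), using the Leibniz rule and setting $\lambda=0$ gives, for every $m\ge 0$,
\[
\p_\lambda^m A(0,\gamma)=\sum_{k=0}^{m}\binom{m}{k}\int_0^T(-2t)^{m-k}\,v_k(\gamma(t))\,dt .
\]

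I would then recover the functions $v_m$ on $O$ by induction on $m$. For $m=0$ the last identity reads $A(0,\gamma)=I_Ov_0(\gamma)$, so the assumed constructive invertibility of $I_O$ on $O$ reconstructs $v_0$ there. Assume $v_0,\dots,v_{m-1}$ have already been reconstructed on $O$. For each nontangential geodesic $\gamma$ contained in $O$ the summands with $k<m$ are then explicitly computable: the weights $t\mapsto(-2t)^{m-k}$ and the length $T$ are determined by the known transversal geometry, while $v_k(\gamma(t))$ is known because $\gamma$ never leaves $O$. Subtracting these known summands from the known quantity $\p_\lambda^mA(0,\gamma)$ isolates $\int_0^T v_m(\gamma(t))\,dt=I_Ov_m(\gamma)$, and one further application of the constructive inversion of $I_O$ reconstructs $v_m$ on $O$. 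This closes the induction, so every $v_m$ is known on $O$.

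Finally, fix $x'\in O$. The values $v_m(x')$ are $m!$ times the Taylor coefficients at $\lambda=0$ of the entire function $\lambda\mapsto\widehat{(cq)}(2\lambda,x')$, whose exponential type forces that Taylor series to converge on all of $\C$; summing it recovers $\widehat{(cq)}(2\lambda,x')$ for every $\lambda\in\R$, i.e.\ $\widehat{(cq)}(\xi_1,x')$ for every $\xi_1\in\R$. One-dimensional Fourier inversion in $x_1$ then yields $(cq)(x_1,x')$ for all $x_1\in\R$ and all $x'\in O$, and dividing by the known positive factor $c$ reconstructs $q$ on $M\cap(\R\times O)$, as desired. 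The step I expect to be the main obstacle is the inductive peeling: its success rests entirely on the observation that at order $m$ the still-unknown $v_m$ enters the data only through the plain ray transform $I_Ov_m$, every lower-order contribution having been turned into a \emph{weighted} ray transform of an already-reconstructed function; the supporting analytic facts — differentiation of $A(\lambda,\gamma)$ under the integral, and convergence of the Taylor summation — are routine consequences of the compact $x_1$-support of $q$.
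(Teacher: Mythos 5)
Your proposal is correct and follows essentially the same route as the paper: set $\lambda=0$ and invert $I_O$, then differentiate the attenuated transform in $\lambda$ at $0$, peel off the already-reconstructed lower-order (weighted) contributions, invert $I_O$ again, iterate, and finally use analyticity of $\widehat{(cq)}(\cdot,x')$ coming from the compact $x_1$-support to sum the Taylor series and apply one-dimensional Fourier inversion. Your explicit Leibniz formula and induction merely spell out the paper's iterative differentiation step in more detail (up to an immaterial sign convention in $\p_\lambda^k\widehat{(cq)}$), so there is nothing substantive to add.
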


This result gives a constructive proof of the corresponding uniqueness result by Kenig and Salo \cite[Theorem~2.2]{KSa}; the latter is the above mentioned generalization of the result of Isakov~\cite{Is}.
\medskip

Constructive invertibility of the local ray transform, to the best of author's knowledge, is known in the following case: if $M_0$ has dimension $n\ge 3$ and if $p\in \p M_0$ is such that $\p M_0$ is strictly convex near $p$, then there is an open $O\subset M_0$ containing $p$ on which $I_O$ is constructively invertible; this result is due to Uhlmann and Vasy \cite{UV}. In two dimensions, no such result is known. Even injectivity of the local geodesic ray transform is an open question.
\medskip

If $\p M_{\rm tan}$ has zero measure in $\p M$, we give the reconstruction procedure to determine potentials globally. The problem is reduced to the constructive invertibility of the global geodesic ray transform on the transversal simple manifold $M_0$.
\begin{Theorem}\label{main th3}
Let $(M,g)$ be an admissible manifold, and suppose that $q\in C(M)$ such that $0$ is not a Dirichlet eigenvalue of $-\Delta_g+q$. Suppose that $\p M_{\rm tan}$ is of zero measure in $\p M$. If the global geodesic ray transform is constructively invertible in $M_0$, then $q$ can be constructively determined in $M$ from the knowledge of $\Lambda_{g,q}(f)$ on $\p M_-$ for all $f\in \mathcal H_{g}(\p M)$ supported in $\p M_+$.
\end{Theorem}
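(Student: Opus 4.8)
\emph{Approach.} The plan is to deduce Theorem~\ref{main th3} from Theorem~\ref{main th} and then to convert the resulting attenuated data into the ordinary (unattenuated) geodesic ray transform on $M_0$, which the hypothesis lets us invert constructively.

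\emph{Step 1 (reduction to Theorem~\ref{main th}).} Fix an exhaustion $E_1\subset\subset E_2\subset\subset\cdots$ of $\p M_0$ by open sets with $\bigcup_j E_j=\p M_0$, and set $\Gamma_{{\rm i},j}=\p M_{\rm tan}\cap(\R\times(\p M_0\setminus E_j))$; this is closed and contained in $\R\times(\p M_0\setminus E_j)$, so Theorem~\ref{main th} applies with $E=E_j$, recovering $\int_0^Te^{-2\lambda t}\widehat{(cq)}(2\lambda,\gamma(t))\,dt$ for every $\lambda\in\R$ and every nontangential $\gamma:[0,T]\to M_0$ with endpoints in $E_j$, from $\Lambda_{g,q}(f)$ on a neighbourhood of $\p M_-\cup\Gamma_{{\rm a},j}$ for $f$ supported in a neighbourhood of $\p M_+\cup\Gamma_{{\rm a},j}$, where $\Gamma_{{\rm a},j}=\p M_{\rm tan}\setminus\Gamma_{{\rm i},j}$. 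Because $\p M_{\rm tan}$, hence each $\Gamma_{{\rm a},j}$, has zero measure in $\p M$, one checks---by inspecting the boundary integral equation and the single layer potentials used in the proof of Theorem~\ref{main th}, which live in $L^2$-based spaces on $\p M$---that the extra data near the null set $\Gamma_{{\rm a},j}$ is dispensable, so that the knowledge of $\Lambda_{g,q}(f)$ on $\p M_-$ for $f$ supported in $\p M_+$ suffices. Letting $j\to\infty$ yields $G(\lambda,\gamma):=\int_0^Te^{-2\lambda t}\widehat{(cq)}(2\lambda,\gamma(t))\,dt$ for all $\lambda\in\R$ and all nontangential geodesics $\gamma$ of $M_0$.

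\emph{Step 2 (attenuated data $\Rightarrow$ unattenuated ray transform).} Since $q\in C(M)$ and $M$ is compact, $x_1\mapsto(cq)(x_1,x')$ is bounded with support in a fixed compact interval, so by Paley--Wiener $\lambda\mapsto\widehat{(cq)}(2\lambda,x')=\sum_{k\ge0}h_k(x')\lambda^k$ is entire, with $h_k\in C(M_0)$ and uniform convergence on compacta. Substituting this into $G$ and expanding $e^{-2\lambda t}$ gives
$$
G(\lambda,\gamma)=\sum_{m\ge0}\lambda^m\Big(\sum_{j=0}^m\frac{(-2)^j}{j!}\int_0^Tt^j\,h_{m-j}(\gamma(t))\,dt\Big)=:\sum_{m\ge0}\lambda^m\,G_m(\gamma),
$$
and $G_m(\gamma)$ is read off from $G(\cdot,\gamma)$ by differentiating at $\lambda=0$. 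Recover the $h_k$ inductively: $G_0(\gamma)=\int_0^Th_0(\gamma(t))\,dt$ is the global geodesic ray transform of $h_0$, so the assumed constructive inversion recovers $h_0$; and once $h_0,\dots,h_{m-1}$ are known the moment integrals $\int_0^Tt^jh_{m-j}(\gamma(t))\,dt$ for $1\le j\le m$ are computable outright (known functions, known geodesics), so $\int_0^Th_m(\gamma(t))\,dt=G_m(\gamma)-\sum_{j=1}^m\frac{(-2)^j}{j!}\int_0^Tt^jh_{m-j}(\gamma(t))\,dt$ is known for all nontangential $\gamma$, and one more inversion gives $h_m$.

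\emph{Step 3 (conclusion) and main obstacle.} Knowing all $h_k$ determines the entire function $\lambda\mapsto\widehat{(cq)}(2\lambda,x')$ for every $x'\in M_0$; inverting the one-dimensional Fourier transform in $x_1$ then recovers $(cq)$ on $\R\times M_0$, and dividing by the known positive conformal factor $c$ recovers $q$ on $M$. The delicate step is Step~1: Theorem~\ref{main th} nominally demands the data on neighbourhoods of $\p M_\pm\cup\Gamma_{\rm a}$, and one must verify that in the zero-measure regime the construction genuinely sees only $\p M_\pm$ --- this has to be extracted from the proof of Theorem~\ref{main th} (from the nullity of $\Gamma_{\rm a}$ together with the fact that the Dirichlet-to-Neumann map enters only through $L^2(\p M)$-pairings against traces of complex geometrical optics solutions), rather than invoking Theorem~\ref{main th} as a black box. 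The Paley--Wiener bootstrap of Step~2 and the Fourier inversion of Step~3 are then routine.
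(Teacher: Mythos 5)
Your Steps 2 and 3 are fine --- they are essentially the paper's own argument for Theorem~\ref{main th2} (recover the Taylor coefficients of $\lambda\mapsto\widehat{(cq)}(2\lambda,\cdot)$ inductively from the $\lambda$-expansion of the attenuated transform, invoke analyticity from compact support in $x_1$, then invert the one-dimensional Fourier transform). The genuine gap is Step 1. Theorem~\ref{main th} takes data on open neighborhoods $\Gamma_\pm$ of $\partial M_\pm\cup\Gamma_{\rm a}$, and its proof produces complex geometrical optics solutions $u_1,u_2$ whose traces are supported in those neighborhoods, not in $\partial M_\pm$. Even when $\partial M_{\rm tan}$ is null, the excess sets $\Gamma_+\setminus\partial M_+$ and $\Gamma_-\setminus\partial M_-$ contain open (hence positive-measure) pieces of $\partial M_\mp$ adjacent to $\Gamma_{\rm a}$, so the identity \eqref{rewritten integral identity} and the boundary integral equation genuinely consume Dirichlet inputs supported outside $\partial M_+$ and Neumann measurements taken outside $\partial M_-$. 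Your claim that ``the extra data near the null set $\Gamma_{{\rm a},j}$ is dispensable because the Dirichlet-to-Neumann map enters only through $L^2$-pairings'' therefore does not close the argument: the excess data region is not a null set, and nothing in the proposal shows that CGO traces supported in $\Gamma_+$ can be replaced by traces supported in $\partial M_+$ while keeping the left-hand side of \eqref{rewritten integral identity} computable from the restricted data. In the paper the zero-measure hypothesis is used at a different point, namely inside the CGO construction itself (Remarks~\ref{CGO harmonic supported in pM_(sgn(tau))} and \ref{CGO Schrodinger supported in pM_(sgn(tau))}): one drops the support restriction on the angular amplitude $b$ (so no set $E$, and no exhaustion, is needed), prescribes the boundary value $f^-_{\tau,\delta}=-a$ on all of $S^-_\tau$ in Proposition~\ref{solvability result giving R_tau}, and uses $\sigma_{\partial M}(\partial M_{\rm tan})=0$ to make $\|f^-_{\tau,\delta}\|_{L^2(S^0_{\tau,\delta})}$, and hence the correction term $r_0$, small as $\delta\to0$, $|\tau|\to\infty$. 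This forces $\mathtt{tr}(u_1)$ to be supported in $\partial M_+$ and $\mathtt{tr}(u_2)$ in $\partial M_-$, after which the proof of Theorem~\ref{main th} runs verbatim with $\Gamma_\pm$ replaced by $\partial M_\pm$. That modification is exactly what your Step 1 would have to supply explicitly; flagging it as ``to be extracted'' leaves the decisive point unproven, and the mechanism you sketch misidentifies where the nullity of $\partial M_{\rm tan}$ actually enters.

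A secondary remark: once the CGOs are constructed with traces in $\partial M_\pm$, the attenuated transform is obtained directly for every nontangential geodesic of $M_0$, so the exhaustion $E_j\uparrow\partial M_0$ in your Step 1 is unnecessary; it was only needed because you tried to route the argument through Theorem~\ref{main th} as stated, which is precisely what cannot be done with the restricted data.
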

This is a generalization with refinements to admissible manifolds of the corresponding result by Nachman and Street \cite{NaS} in Euclidean setting. More precisely, comparing to \cite{NaS}, we do not assume that the subsets of Dirichlet data inputs overlap with the subsets of Neumann data measurements. So our reconstruction procedure is new even in Eulidean space. The version of Theorem~\ref{main th3} was given by Kenig, Salo and Uhlmann \cite{KSaU} for full data case on admissible manifolds of dimension three.
\medskip

Constructive invertibility of the global ray transform is known in the following cases:
\begin{itemize}
\item $(M_0,g_0)=(\overline \Omega,e)$ where $\Omega\subset\R^n$ is open and bounded with $C^\infty$ boundary, and $e$ is the Euclidean metric. In this case inversion formula is given in the book of Sharafutdinov \cite[Section~2.12]{Shar}.

\item $(M_0,g_0)$ of dimension $n\ge 3$, have strictly convex boundary and is globally foliated by strictly convex hypersurfaces. For such case, there is a layer stripping type algorithm for reconstruction developed by Uhlmann and Vasy~\cite{UV}.

\item $(M_0,g_0$ is a simple surface. In this case, there is a Fredholm type inversion formula which was derived by Pestov and Uhlmann \cite{PeU}; see also the article of Krishnan \cite{Kr}.
\end{itemize}
\medskip

The problem of constructive inversion of local or global geodesic ray transforms is of independent interest in integral geometry.
\medskip

The structure of the paper is as follows. In Section~\ref{S2} we give some preliminaries about trace operators and Green's identity for the space $H_{\Delta_g}(M)$. We also consider the well-posedness of the Dirichlet problem for the Schr\"odinger equation $(-\Delta_g+q)u=0$ with boundary condition in $\mathcal H_g(\p M)$. Section~\ref{S4}, following the arguments of \cite{NaS} and modifying them, is devoted to the construction of the new Green's operators for the Laplace-Beltrami operator, and in Section~\ref{S5} the corresponding single layer potentials are constructed. The solvability of the required boundary integral equation is given in Section~\ref{S6}. Then we construct complex geometrical optics solutions in Section~\ref{S7}, and we use these solutions to give reconstruction procedures in Section~\ref{S8}.

\section{Trace operators and the Dirichlet-to-Neumann map}\label{S2}
Let $(M,g)$ be a compact Riemannian manifold with boundary. We use the notation $d\Vol_g$ for the volume form of $(M,g)$ and $d(\p M)_g$ for the induced volume form on the boundary $\p M$. For any two functions $u,v$ on $M$, define an inner product
$$
(u|v)_{L^2(M)}:=\int_M u(x)\overline{v(x)}\,d\Vol_g(x),
$$
and the corresponding norm will be denoted by $\|\cdot\|_{L^2(M)}$. For any two functions $f,h$ on $\Gamma\subset \p M$, define an inner product
$$
(f|h)_\Gamma:=\int_\Gamma f(x)\overline{h(x)}\,d\sigma_{\p M}(x),
$$
and by $\|\cdot\|_\Gamma$ will be denoted the corresponding norm. We also write for short
$$
\|\nabla u\|_{L^2(M)}=\(\int_M |\nabla u(x)|_g^2\,d\Vol_g(x)\)^{1/2}.
$$
Following Bukhgeim and Uhlmann \cite{BU}, we work with the following Hilbert space which is the largest domain of the Laplace-Beltrami operator $\Delta_g$:
$$
H_{\Delta_g}(M)=\{u\in L^2(M):\Delta_g u\in L^2(M)\}.
$$
The norm on $H_{\Delta_g}(M)$ is
$$
\|u\|_{H_{\Delta_g}(M)}^2=\|u\|_{L^2(M)}^2+\|\Delta_g u\|_{L^2(M)}^2.
$$
The proof of the following result is essentially the same as in \cite{BU} (see also, for example \cite{LM}). We include it here for the completeness and accuracy of the exposition.
\begin{Proposition}\label{extensions of the trace operators}
The trace maps $\mathtt{tr}(u)=u|_{\p M}$ and $\mathtt{tr}_\nu (u)=\de{u}{\nu}\big|_{\p M}$ defined on $C^\infty(M)$ have extensions to a bounded operators $H_{\Delta_g}(M)\to H^{-1/2}(\p M)$ and $H_{\Delta_g}(M)\to H^{-3/2}(\p M)$, respectively. Moreover, if $u\in H_{\Delta_g}(M)$ and $\mathtt{tr}(u)\in H^{3/2}(\p M)$, then $u\in H^2(M)$ and $\mathtt{tr}_\nu(u)\in H^{1/2}(\p M)$.
\end{Proposition}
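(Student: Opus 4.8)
The plan is to deduce the whole statement from Green's second identity together with the standard surjectivity of the Cauchy trace map on $H^2(M)$, via a duality argument. For $u,v\in C^\infty(M)$ the divergence theorem gives
\begin{equation}\label{green-prop}
(\Delta_g u\,|\,v)_{L^2(M)}-(u\,|\,\Delta_g v)_{L^2(M)}=(\mathtt{tr}_\nu u\,|\,\mathtt{tr}\,v)_{\p M}-(\mathtt{tr}\,u\,|\,\mathtt{tr}_\nu v)_{\p M}.
\end{equation}
The two ingredients I would bring in are the density of $C^\infty(M)$ in $H_{\Delta_g}(M)$ (with respect to the graph norm) and the trace theorem that the Cauchy trace map $(\mathtt{tr},\mathtt{tr}_\nu)\colon H^2(M)\to H^{3/2}(\p M)\times H^{1/2}(\p M)$ is bounded and onto with a bounded right inverse. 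In particular, for $\psi\in H^{1/2}(\p M)$ there is $v\in H^2(M)$ with $\mathtt{tr}\,v=0$, $\mathtt{tr}_\nu v=\psi$, $\|v\|_{H^2(M)}\lesim\|\psi\|_{H^{1/2}(\p M)}$, and for $\phi\in H^{3/2}(\p M)$ there is $v\in H^2(M)$ with $\mathtt{tr}\,v=\phi$, $\mathtt{tr}_\nu v=0$, $\|v\|_{H^2(M)}\lesim\|\phi\|_{H^{3/2}(\p M)}$. By density of $C^\infty(M)$ in $H^2(M)$ and continuity of the classical traces, \eqref{green-prop} extends to $u\in C^\infty(M)$ and arbitrary $v\in H^2(M)$.

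Next I would prove the two a priori bounds on $C^\infty(M)$ by testing \eqref{green-prop} against the functions just produced. For $u\in C^\infty(M)$ and $\psi\in H^{1/2}(\p M)$, choosing $v$ with $\mathtt{tr}\,v=0$, $\mathtt{tr}_\nu v=\psi$ collapses \eqref{green-prop} to $(\mathtt{tr}\,u\,|\,\psi)_{\p M}=(u\,|\,\Delta_g v)_{L^2(M)}-(\Delta_g u\,|\,v)_{L^2(M)}$, whence by Cauchy--Schwarz
\[
|(\mathtt{tr}\,u\,|\,\psi)_{\p M}|\le\|u\|_{H_{\Delta_g}(M)}\,\|v\|_{H_{\Delta_g}(M)}\lesim\|u\|_{H_{\Delta_g}(M)}\,\|\psi\|_{H^{1/2}(\p M)},
\]
and taking the supremum over $\psi$ gives $\|\mathtt{tr}\,u\|_{H^{-1/2}(\p M)}\lesim\|u\|_{H_{\Delta_g}(M)}$. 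Symmetrically, for $\phi\in H^{3/2}(\p M)$ and $v$ with $\mathtt{tr}\,v=\phi$, $\mathtt{tr}_\nu v=0$ one gets $(\mathtt{tr}_\nu u\,|\,\phi)_{\p M}=(\Delta_g u\,|\,v)_{L^2(M)}-(u\,|\,\Delta_g v)_{L^2(M)}$ and hence $\|\mathtt{tr}_\nu u\|_{H^{-3/2}(\p M)}\lesim\|u\|_{H_{\Delta_g}(M)}$. By density of $C^\infty(M)$ in $H_{\Delta_g}(M)$ both maps then extend uniquely to bounded operators $H_{\Delta_g}(M)\to H^{-1/2}(\p M)$ and $H_{\Delta_g}(M)\to H^{-3/2}(\p M)$, and passing \eqref{green-prop} to the limit (interpreting the boundary terms as the $H^{-1/2}$--$H^{1/2}$ and $H^{-3/2}$--$H^{3/2}$ dualities) upgrades Green's identity to all $u\in H_{\Delta_g}(M)$ and $v\in H^2(M)$; I keep this for the regularity part.

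For the last assertion, let $u\in H_{\Delta_g}(M)$ with $f:=\Delta_g u\in L^2(M)$ and $\phi:=\mathtt{tr}\,u\in H^{3/2}(\p M)$, and solve the Dirichlet problem $\Delta_g w=f$ in $M$, $\mathtt{tr}\,w=\phi$ on $\p M$ (uniquely solvable, since $-\Delta_g$ with Dirichlet condition has trivial kernel); by elliptic regularity up to the boundary, $w\in H^2(M)$ and $\mathtt{tr}_\nu w\in H^{1/2}(\p M)$. Then $v:=u-w\in H_{\Delta_g}(M)$ satisfies $\Delta_g v=0$ and $\mathtt{tr}\,v=0$, so applying the extended Green identity with test functions $\psi\in H^2(M)$ having $\mathtt{tr}\,\psi=0$ yields $(v\,|\,\Delta_g\psi)_{L^2(M)}=0$; since $\Delta_g$ maps $\{\psi\in H^2(M):\mathtt{tr}\,\psi=0\}$ onto $L^2(M)$, this forces $v=0$, i.e. $u=w\in H^2(M)$ and $\mathtt{tr}_\nu u=\mathtt{tr}_\nu w\in H^{1/2}(\p M)$. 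The duality estimates and this last step are routine once the two infrastructural facts are available, so the main obstacle is establishing those: chiefly the density of $C^\infty(M)$ in $H_{\Delta_g}(M)$, which (following \cite{BU}, see also \cite{LM}) I would obtain by first solving $\Delta_g h=\Delta_g u$ with zero Dirichlet data to reduce the problem to approximating an $L^2$ distributional solution of $\Delta_g v=0$ by smooth solutions, and the surjectivity of the Cauchy trace map on $H^2(M)$, which can also be cited.
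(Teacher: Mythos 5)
Your proposal is correct. The first half — the duality argument establishing the bounded extensions of $\mathtt{tr}$ and $\mathtt{tr}_\nu$, via Green's identity tested against $H^2(M)$ functions with prescribed Cauchy data and then density of $C^\infty(M)$ in $H_{\Delta_g}(M)$ — is essentially identical to the paper's proof. For the final regularity assertion, however, you take a genuinely different route. The paper first proves an a priori estimate $\|u\|_{H^2(M)}\le C\|u\|_{H_{\Delta_g}(M)}$ for smooth $u$ with $\mathtt{tr}(u)=0$ (combining the Green-identity bound on $\|u\|_{H^1}$ with boundary elliptic regularity), extends it by density to all zero-trace elements of $H_{\Delta_g}(M)$, and then handles general $\mathtt{tr}(u)\in H^{3/2}(\p M)$ by subtracting an $H^2$ extension of the trace. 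You instead solve the Dirichlet problem with data $(\Delta_g u,\mathtt{tr}(u))$ to produce an $H^2$ candidate $w$, and show $u=w$ by a uniqueness-by-duality argument: the extended Green identity gives $(v\,|\,\Delta_g\psi)_{L^2(M)}=0$ for $v=u-w$ and all $\psi\in H^2(M)$ with $\mathtt{tr}\,\psi=0$, and surjectivity of the Dirichlet Laplacian onto $L^2(M)$ forces $v=0$. Your version leans more heavily on solvability and $H^2$ regularity of the Dirichlet problem plus the limiting form of Green's identity (the paper's Corollary~\ref{generalized Green's identity}), but it sidesteps the slightly delicate point in the paper's density step, namely approximating zero-trace elements of $H_{\Delta_g}(M)$ by smooth functions vanishing on $\p M$; the paper's route, in exchange, yields the quantitative bound $\|u\|_{H^2}\lesim\|u\|_{H_{\Delta_g}}$ on the zero-trace subspace rather than only the qualitative membership $u\in H^2(M)$. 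Both arguments rest on the same infrastructure (surjectivity of the Cauchy trace on $H^2(M)$, elliptic regularity, and density of $C^\infty(M)$ in $H_{\Delta_g}(M)$, which you correctly flag and outsource to \cite{BU,LM}), so your proof is a valid alternative.
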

\begin{proof}
First, we show that the trace map $\mathtt{tr}$ has an extension to a bounded operator $H_{\Delta_g}(M)\to H^{-1/2}(\p M)$. Let $u\in C^\infty(M)$ and $w\in H^{1/2}(\p M)$. 
By the surjectivity of the trace map on $H^2(M)$, there is $v\in H^2(M)$ such that
$$
\mathtt{tr}(v)=0,\quad \mathtt{tr}_\nu(v)=\de{v}{\nu}\bigg|_{\p M}=w,\quad \|v\|_{H^2(M)}\le C\|w\|_{H^{1/2}(\p M)}.
$$
Using Green's formula, we get
\begin{align*}
(\mathtt{tr}(u)|w)_{\p M}&=\int_{\p M}\mathtt{tr}(u)\overline{w}\,d(\p M)_g\\
&=\int_{\p M}\mathtt{tr}(u)\overline{\mathtt{tr}_\nu(v)}\,d(\p M)_g=\int_M (u\overline{\Delta_g v}-\overline{v}\Delta_g u)\,d\Vol_g.
\end{align*}
Therefore,
$$
|(\mathtt{tr}(u)|w)_{\p M}|\le \|u\|_{H_{\Delta_g}(M)}\|v\|_{H^2(M)}\le C\|u\|_{H_{\Delta_g}(M)}\|w\|_{H^{1/2}(\p M)}.
$$
This proves that the map $\mathtt{tr}:C^\infty(M)\to H^{-1/2}(\p M)$ is bounded and controlled by the $H_{\Delta_g}(M)$-norm. Since $C^\infty(M)$ is dense in $H_{\Delta_g}(M)$, we can extend $\mathtt{tr}$ to a bounded linear map $H_{\Delta_g}(M)\to H^{-1/2}(\p M)$.
\medskip

Next, we show that the trace map $\mathtt{tr}_\nu$ has an extension to a bounded operator $H_{\Delta_g}(M)\to H^{-3/2}(\p M)$. Let $u\in C^\infty(M)$ and $w\in H^{3/2}(\p M)$. 
By the surjectivity of the trace map on $H^2(M)$, there is $v\in H^2(M)$ such that
$$
\mathtt{tr}(v)=w,\quad \mathtt{tr}_\nu(v)=\de{v}{\nu}\bigg|_{\p M}=0,\quad \|v\|_{H^2(M)}\le C\|w\|_{H^{3/2}(\p M)}.
$$
Using Green's formula, we get
\begin{align*}
(\mathtt{tr}_\nu(u)|w)_{\p M}&=\int_{\p M}\mathtt{tr}_\nu(u)\overline{w}\,d(\p M)_g\\
&=\int_{\p M}\mathtt{tr}_\nu(u)\overline{\mathtt{tr}(v)}\,d(\p M)_g=\int_M (\overline{v}\Delta_g u-u\overline{\Delta_g v})\,d\Vol_g.
\end{align*}
Therefore,
$$
|(\mathtt{tr}_\nu(u)|w)_{\p M}|\le \|u\|_{H_{\Delta_g}(M)}\|v\|_{H^2(M)}\le C\|u\|_{H_{\Delta_g}(M)}\|w\|_{H^{3/2}(\p M)}.
$$
This proves that the map $\mathtt{tr}_\nu:C^\infty(M)\to H^{-3/2}(\p M)$ is bounded and controlled by the $H_{\Delta_g}(M)$-norm. Since $C^\infty(M)$ is dense in $H_{\Delta_g}(M)$, we can extend $\mathtt{tr}_\nu$ to a bounded linear map $H_{\Delta_g}(M)\to H^{-3/2}(\p M)$.
\medskip

Now, we give the proof of the last statement. First, we consider the case when $\mathtt{tr}(u)=0$. Let $u\in C^\infty(M)$ with $\mathtt{tr}(u)=0$. Using, Green's identity, we have
\begin{equation}\label{H^1 norm is bounded by H_Delta norm}
\begin{aligned}
\|u\|_{H^1(M)}^2&=\|u\|_{L^2(M)}^2+\|\nabla u\|_{L^2(M)}^2\\
&=\|u\|_{L^2(M)}^2-(u,\Delta_g u)_{L^2(M)}\\
&\le\|u\|_{L^2(M)}^2+\frac{1}{2}\(\|u\|_{L^2(M)}^2+\|\Delta_g u\|^2_{L^2(M)}\)\\
&\le C\|u\|_{H_{\Delta_g}(M)}^2,
\end{aligned}
\end{equation}
for some constant $C>0$. By \cite[Theorem~1.3]{Tay} in Chapter~5, we have
$$
\|u\|_{H^2(M)}^2\le C\|\Delta_g u\|^2_{L^2(M)}+C\|u\|_{H^1(M)}^2,
$$
for some another constant $C>0$. Combining this with \eqref{H^1 norm is bounded by H_Delta norm}, we obtain
$$
\|u\|_{H^2(M)}^2\le C\|u\|_{H_{\Delta_g}(M)}^2,\quad u\in C^\infty(M),\quad \mathtt{tr}(u)=0.
$$
By density arguments, we obtain
$$
\|u\|_{H^2(M)}^2\le C\|u\|_{H_{\Delta_g}(M)}^2,\quad u\in H_{\Delta_g}(M),\quad \mathtt{tr}(u)=0.
$$
This proves the last statement for the case when $\mathtt{tr}(u)=0$.
\medskip

Suppose now that $u\in H_{\Delta_g}(M)$ with $\mathtt{tr}(u)\in H^{3/2}(\p M)$. By the surjectivity of the trace operator, there is $v\in H^2(M)$ such that $\mathtt{tr}(v)=\mathtt{tr}(u)$. Set $w:=u-v$, then $w\in H_{\Delta_g}(M)$ with $\mathtt{tr}(w)=0$. By what we have proved above, $w\in H^2(M)$, and hence $u\in H^2(M)$.
\end{proof}
The proof of Proposition~\ref{extensions of the trace operators} gives the following.
\begin{Corollary}\label{generalized Green's identity}
For $u\in H_{\Delta_g}(M)$ and $v\in H^2(M)$ we have the generalized Green's identity
\begin{align*}
(u|(-\Delta_g)v)_{L^2(M)}&-((-\Delta_g)u|v)_{L^2(M)}\\
&=\<\mathtt{tr}_\nu(u),\mathtt{tr}(v)\>_{H^{-3/2,3/2}(\p M)}-\<\mathtt{tr}(u),\mathtt{tr}_\nu(v)\>_{H^{-1/2,1/2}(\p M)},
\end{align*}
where $\<\cdot,\cdot\>_{H^{-s,s}(\p M)}$ is the duality between $H^{-s}(\p M)$ and $H^s(\p M)$.
\end{Corollary}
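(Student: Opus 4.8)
The plan is to obtain the identity from the classical Green's second identity on $C^\infty(M)$ and then extend it in two density steps, quoting only the boundedness properties already established in Proposition~\ref{extensions of the trace operators}. For $u,v\in C^\infty(M)$ ordinary integration by parts gives
$$
(u|(-\Delta_g)v)_{L^2(M)}-((-\Delta_g)u|v)_{L^2(M)}=\int_M\big(\overline v\,\Delta_g u-u\,\overline{\Delta_g v}\big)\,d\Vol_g=(\mathtt{tr}_\nu(u)|\mathtt{tr}(v))_{\p M}-(\mathtt{tr}(u)|\mathtt{tr}_\nu(v))_{\p M},
$$
and in the smooth case the $L^2(\p M)$–inner products on the right coincide with the duality pairings $\<\cdot,\cdot\>_{H^{-3/2,3/2}(\p M)}$ and $\<\cdot,\cdot\>_{H^{-1/2,1/2}(\p M)}$. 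So the only work is to check that both sides stay continuous as $u$ and $v$ are relaxed to $H_{\Delta_g}(M)$ and $H^2(M)$ respectively.

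First I would fix $v\in C^\infty(M)$ and let $u$ vary over $H_{\Delta_g}(M)$. The left-hand side is continuous in $u$ for $\|\cdot\|_{H_{\Delta_g}(M)}$: the term $(u|(-\Delta_g)v)_{L^2(M)}$ is controlled by $\|u\|_{L^2(M)}$ since $(-\Delta_g)v\in L^2(M)$, and $((-\Delta_g)u|v)_{L^2(M)}$ is controlled by $\|\Delta_g u\|_{L^2(M)}\le\|u\|_{H_{\Delta_g}(M)}$. On the right, $\mathtt{tr}(v)\in H^{3/2}(\p M)$ and $\mathtt{tr}_\nu(v)\in H^{1/2}(\p M)$ are fixed, while by Proposition~\ref{extensions of the trace operators} the maps $u\mapsto\mathtt{tr}_\nu(u)\in H^{-3/2}(\p M)$ and $u\mapsto\mathtt{tr}(u)\in H^{-1/2}(\p M)$ are bounded on $H_{\Delta_g}(M)$, so both pairings on the right are continuous in $u$. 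Since $C^\infty(M)$ is dense in $H_{\Delta_g}(M)$ (as used in the proof of Proposition~\ref{extensions of the trace operators}), the identity holds for all $u\in H_{\Delta_g}(M)$ and all $v\in C^\infty(M)$.

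Next I would fix $u\in H_{\Delta_g}(M)$ and let $v$ vary over $H^2(M)$. The left-hand side is now continuous in $v$ for $\|\cdot\|_{H^2(M)}$: the first term is controlled by $\|u\|_{L^2(M)}$ and $\|\Delta_g v\|_{L^2(M)}\le C\|v\|_{H^2(M)}$, the second by the fixed $\|\Delta_g u\|_{L^2(M)}$ and $\|v\|_{L^2(M)}$. On the right, the trace theorem gives that $v\mapsto\mathtt{tr}(v)\in H^{3/2}(\p M)$ and $v\mapsto\mathtt{tr}_\nu(v)\in H^{1/2}(\p M)$ are bounded on $H^2(M)$, while $\mathtt{tr}_\nu(u)\in H^{-3/2}(\p M)$ and $\mathtt{tr}(u)\in H^{-1/2}(\p M)$ are fixed. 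Using the density of $C^\infty(M)$ in $H^2(M)$ and passing to the limit yields the identity for all $v\in H^2(M)$, which is the claim.

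There is no genuine obstacle here; the content is the bookkeeping of function spaces so that each pairing $\<\mathtt{tr}_\nu(u),\mathtt{tr}(v)\>_{H^{-3/2,3/2}(\p M)}$ and $\<\mathtt{tr}(u),\mathtt{tr}_\nu(v)\>_{H^{-1/2,1/2}(\p M)}$ is between matching dual Sobolev spaces — which is precisely why the regularity $v\in H^2(M)$, forcing $\mathtt{tr}(v)\in H^{3/2}(\p M)$ and $\mathtt{tr}_\nu(v)\in H^{1/2}(\p M)$, is assumed. The only mildly delicate point worth stating explicitly is that the first limiting step relies on approximating an arbitrary element of $H_{\Delta_g}(M)$ by smooth functions in the graph norm, which is already invoked in Proposition~\ref{extensions of the trace operators} and may be cited from there rather than reproved.
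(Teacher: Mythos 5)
Your proposal is correct and is essentially the paper's own argument: the paper states the corollary as a direct consequence of the proof of Proposition~\ref{extensions of the trace operators}, which rests on exactly the ingredients you use — the classical Green's formula for smooth functions, the boundedness of $\mathtt{tr}$ and $\mathtt{tr}_\nu$ on $H_{\Delta_g}(M)$, the trace theorem on $H^2(M)$, and density of $C^\infty(M)$ in both spaces. Your explicit two-step limiting argument is just a slightly more detailed write-up of the same route.
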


Now we introduce the following space on the boundary $\p M$:
$$
\mathcal H_g(\p M)=\{\mathtt{tr}(u):u\in H_{\Delta_g}(M)\}\subset H^{-1/2}(\p M).
$$
Assume that $q\in L^\infty(M)$ and let us introduce the Bergman space $b_q(M)$ as follows
$$
b_q(M)=\{u\in L^2(M):(-\Delta_g+q)u=0\}\subset H_{\Delta_g}(M).
$$
The topology on this space is a subspace topology in $L^2(M)$. It is not difficult to check that $b_q(M)$ is a closed subspace of $L^2(M)$.
\medskip

We need the following result to define a topology on $\mathcal H_g(\p M)$:
\begin{Proposition}\label{pre solv}
If $q\in L^\infty(M)$ and $0$ is not a Dirichlet eigenvalue of $-\Delta_g+q$ in $M$, then $\mathtt{tr}:b_q(M)\to \mathcal H_g(\p M)$ is one-to-one and onto.
\end{Proposition}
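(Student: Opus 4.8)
The plan is to derive injectivity directly from the eigenvalue hypothesis together with the elliptic regularity contained in Proposition~\ref{extensions of the trace operators}, and to derive surjectivity by reducing to the solvability of an inhomogeneous Dirichlet problem with $L^2$ right-hand side and vanishing boundary trace.

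For injectivity I would argue as follows. Suppose $u\in b_q(M)$ with $\mathtt{tr}(u)=0$. Since $b_q(M)\subset H_{\Delta_g}(M)$ and $\mathtt{tr}(u)=0\in H^{3/2}(\p M)$, the last statement of Proposition~\ref{extensions of the trace operators} gives $u\in H^2(M)$; thus $u\in H^2(M)\cap H^1_0(M)$ solves $(-\Delta_g+q)u=0$, and because $0$ is not a Dirichlet eigenvalue of $-\Delta_g+q$ it follows that $u=0$.

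For surjectivity, the preliminary fact I need is that the hypothesis on $0$ upgrades to invertibility: since the Dirichlet resolvent of $-\Delta_g$ is compact on $L^2(M)$, the Fredholm alternative together with the assumption that $0$ is not a Dirichlet eigenvalue shows that for every $F\in L^2(M)$ there is a unique $v\in H^1_0(M)$ with $(-\Delta_g+q)v=F$; then $\Delta_g v=qv-F\in L^2(M)$ (here $q\in L^\infty(M)$ enters), so $v\in H_{\Delta_g}(M)$ with $\mathtt{tr}(v)=0$, and Proposition~\ref{extensions of the trace operators} again yields $v\in H^2(M)$. Given an arbitrary $f\in\mathcal H_g(\p M)$, I would choose $w\in H_{\Delta_g}(M)$ with $\mathtt{tr}(w)=f$, set $F:=\Delta_g w-qw\in L^2(M)$, let $v\in H^2(M)\cap H^1_0(M)$ solve $(-\Delta_g+q)v=F$ as above, and put $u:=w+v$. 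Then $u\in L^2(M)$ and $(-\Delta_g+q)u=(-\Delta_g+q)w+F=-F+F=0$, so $u\in b_q(M)$, while $\mathtt{tr}(u)=\mathtt{tr}(w)+\mathtt{tr}(v)=f$; hence $\mathtt{tr}:b_q(M)\to\mathcal H_g(\p M)$ is onto.

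The only step requiring genuine input beyond bookkeeping is the solvability of the inhomogeneous problem $(-\Delta_g+q)v=F$, $\mathtt{tr}(v)=0$ for $F\in L^2(M)$ --- that is, passing from ``$0$ is not a Dirichlet eigenvalue'' to bijectivity of $-\Delta_g+q$ acting on $H^2(M)\cap H^1_0(M)$. This is classical elliptic theory, and the regularity bootstrap from $H^1_0(M)$ to $H^2(M)$ is precisely the last assertion of Proposition~\ref{extensions of the trace operators}, so I do not expect any serious obstacle here.
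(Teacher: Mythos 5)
Your proposal is correct and follows essentially the same route as the paper: injectivity via the $H^2$-regularity statement of Proposition~\ref{extensions of the trace operators} plus the eigenvalue hypothesis, and surjectivity by correcting an arbitrary $w\in H_{\Delta_g}(M)$ with $\mathtt{tr}(w)=f$ by the solution of the inhomogeneous Dirichlet problem $(-\Delta_g+q)v=F$, $v\in H^1_0(M)$ (your $u=w+v$ is just the paper's $w=u-v$ with signs rearranged). The only difference is that you spell out, via the Fredholm alternative, the solvability of that Dirichlet problem, which the paper simply invokes.
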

\begin{proof}
Let $u,v\in b_q(M)$ is such that $\mathtt{tr}(u)=\mathtt{tr}(v)$. Set $w=u-v$, then $w\in b_q(M)$ and $\mathtt{tr}(w)=0$. By the last statement of Proposition~\ref{extensions of the trace operators}, $w\in H^2(M)$. By assumption, $0$ is not a Dirichlet eigenvalue of $-\Delta_g+q$ in $M$. Therefore, $(-\Delta_g+q)w=0$ with $w|_{\p M}=0$ imply that $w=0$.
\medskip

Let $h\in\mathcal H_g(\p M)$. By definition of $\mathcal H_g(\p M)$, there is $u\in H_{\Delta_g}(M)$ such that $\mathtt{tr}(u)=h$. Take $v\in H^1_0(M)$ being the solution to the Dirichlet problem $(-\Delta_g+q)v=(-\Delta_g+q)u$, $v|_{\p M}=0$. Set $w=u-v$. Then $w\in H_{\Delta_g}(M)$, $(-\Delta_g+q)w=0$ and $\mathtt{tr}(w)=0$. In other words, $w\in b_q(M)$ with $\mathtt{tr}(w)=h$.
\end{proof}
Let $P_q$ denote the inverse of $\mathtt{tr}:b_q(M)\to \mathcal H_g(\p M)$. We define the norm on $\mathcal H_g(\p M)$ as
$$
\|f\|_{\mathcal H_g(\p M)}=\|P_0 f\|_{L^2(M)}.
$$
In particular, by Proposition~\ref{pre solv}, this implies that $\mathtt{tr}:b_0\to\mathcal H_g(\p M)$ as well as $P_0:\mathcal H_g(\p M)\to b_0$ are bounded. Next, we give the following solvability result of the Dirichlet problem with boundary data in $\mathcal H_g(\p M)$:
\begin{Proposition}\label{solvability of the Dirichlet problem}
The operator $\mathtt{tr}:H_{\Delta_g}(M)\to \mathcal H_g(\p M)$ is bounded. If $q\in L^\infty(M)$ and $0$ is not a Dirichlet eigenvalue of $-\Delta_g+q$ in $M$, then $\mathtt{tr}:b_q(M)\to \mathcal H_g(\p M)$ is a homeomorphism.
\end{Proposition}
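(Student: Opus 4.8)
The plan is to handle the two assertions in turn: first the continuity of $\mathtt{tr}$ out of $H_{\Delta_g}(M)$, which reduces to a standard elliptic estimate, and then the homeomorphism claim, which I will obtain by upgrading the bijection of Proposition~\ref{pre solv} via the open mapping theorem rather than by a second explicit estimate.

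For the first assertion, fix $u\in H_{\Delta_g}(M)$ and recall that, by definition of the norm on $\mathcal H_g(\p M)$, one has $\|\mathtt{tr}(u)\|_{\mathcal H_g(\p M)}=\|w\|_{L^2(M)}$ where $w:=P_0\,\mathtt{tr}(u)$ is the unique element of $b_0(M)$ with $\mathtt{tr}(w)=\mathtt{tr}(u)$ (note $0$ is never a Dirichlet eigenvalue of $-\Delta_g$, so $P_0$ is well defined). Following the construction in the proof of Proposition~\ref{pre solv}, $w=u-v$ where $v\in H^1_0(M)$ solves $-\Delta_g v=-\Delta_g u$ with $v|_{\p M}=0$; the standard energy estimate for the Dirichlet Laplacian gives $\|v\|_{H^1(M)}\le C\|\Delta_g u\|_{H^{-1}(M)}\le C\|\Delta_g u\|_{L^2(M)}$. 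Hence $\|\mathtt{tr}(u)\|_{\mathcal H_g(\p M)}=\|u-v\|_{L^2(M)}\le \|u\|_{L^2(M)}+C\|\Delta_g u\|_{L^2(M)}\le C'\|u\|_{H_{\Delta_g}(M)}$, which is the desired boundedness.

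For the second assertion, I first check that both spaces are Banach spaces: $b_q(M)$ is a closed subspace of $L^2(M)$, as already noted, and since $P_0:\mathcal H_g(\p M)\to b_0(M)$ is by construction an isometric bijection onto the closed subspace $b_0(M)\subset L^2(M)$, the space $\mathcal H_g(\p M)$ is complete as well. Next, on $b_q(M)$ the graph norm is equivalent to the $L^2$ norm, since $(-\Delta_g+q)u=0$ forces $\Delta_g u=qu$ and therefore $\|u\|_{H_{\Delta_g}(M)}^2\le(1+\|q\|_{L^\infty(M)}^2)\|u\|_{L^2(M)}^2$; combined with the first assertion this shows $\mathtt{tr}:b_q(M)\to\mathcal H_g(\p M)$ is bounded. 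By Proposition~\ref{pre solv} it is also a bijection, and a bounded linear bijection of Banach spaces has bounded inverse, so it is a homeomorphism.

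There is no serious analytic obstacle here; the only point requiring care is keeping the topologies straight, in particular verifying that $\mathcal H_g(\p M)$ endowed with $\|\cdot\|_{\mathcal H_g(\p M)}=\|P_0\,\cdot\|_{L^2(M)}$ is genuinely complete (which is exactly where one uses that $b_0(M)$ is closed in $L^2(M)$ together with Proposition~\ref{pre solv} at $q=0$) before the open mapping theorem can be applied. If one prefers to avoid that theorem, the boundedness of $P_q$ can also be obtained directly: writing $P_q f=P_0 f-v$ with $v\in H^1_0(M)$ solving $(-\Delta_g+q)v=q\,P_0 f$, $v|_{\p M}=0$, the energy estimate yields $\|v\|_{L^2(M)}\le C\|q\|_{L^\infty(M)}\|P_0 f\|_{L^2(M)}$, hence $\|P_q f\|_{L^2(M)}\le(1+C\|q\|_{L^\infty(M)})\|f\|_{\mathcal H_g(\p M)}$.
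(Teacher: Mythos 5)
Your proof is correct and follows essentially the same route as the paper: the decomposition $u=w+v$ with $v\in H^1_0(M)$ solving $-\Delta_g v=-\Delta_g u$ gives boundedness of $\mathtt{tr}$ on $H_{\Delta_g}(M)$, and the homeomorphism claim is obtained from the bounded inclusion $b_q(M)\hookrightarrow H_{\Delta_g}(M)$, the bijectivity in Proposition~\ref{pre solv}, and the Open Mapping Theorem. Your explicit verification that $\mathcal H_g(\p M)$ is complete (via the isometry with the closed subspace $b_0(M)\subset L^2(M)$) is a detail the paper leaves implicit but which is genuinely needed to invoke that theorem, and your alternative direct estimate for $P_q$ is a valid way to bypass it.
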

\begin{proof}
Let $u\in H_{\Delta_g}(M)$. Consider $v\in H^1_0(M)$ being the solution to the Dirichlet problem $(-\Delta_g)v=(-\Delta_g)u$, $v|_{\p M}=0$. Set $w=u-v$. Note that $u\mapsto v$ is bounded $H_{\Delta_g}(M)\to L^2(M)$, and hence $u\mapsto w$ is bounded $H_{\Delta_g}(M)\to b_0(M)$ as well, since $(-\Delta_g)w=0$. Since $\mathtt{tr}:b_0\to\mathcal H_g(\p M)$ is bounded and since $\mathtt{tr}(w)=\mathtt{tr}(u)$, we can conclude that the map $u\mapsto \mathtt{tr}(u)$ is bounded $H_{\Delta_g}(M)\to \mathcal H_g(\p M)$.
\medskip

Since the inclusion $b_q\hookrightarrow H_{\Delta_g}(M)$ is bounded, by the first part of the proposition, the map $\mathtt{tr}:b_q\to \mathcal H_g(M)$ is bounded. Bijectivity of the latter map, which follows from Proposition~\ref{pre solv}, together with Open Mapping Theorem, implies the last statement.
\end{proof}

We also extend the domain of the Dirichlet-to-Neumann map to $\mathcal H_g(\p M)$:
\begin{Proposition}\label{extending difference of DN maps}
Suppose that $q\in L^\infty(M)$ and $0$ is not a Dirichlet eigenvalue of $-\Delta_g+q$ in $M$. Then $(\Lambda_q-\Lambda_0)|_{\mathcal H(\p M)}$ is a bounded operator $\mathcal H_g(\p M)\to(\mathcal H_g(\p M))^*$. Moreover, the following integral identity holds
\begin{equation}\label{main integral identity}
\<h,(\Lambda_{g,q}-\Lambda_{g,0})f\>_{H^{-1/2,1/2}(\p M)}=(P_0(h)|qP_q(f))_{L^2(M)},
\end{equation}
for all $f,h\in \mathcal H_g(\p M)$.
\end{Proposition}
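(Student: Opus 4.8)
We expect the whole proposition to come out of the regularity assertion in Proposition~\ref{extensions of the trace operators} together with the generalized Green's identity of Corollary~\ref{generalized Green's identity}; the underlying point is that, although $\Lambda_{g,q}f$ and $\Lambda_{g,0}f$ separately lie only in $H^{-3/2}(\p M)$, their difference is considerably smoother. Here is the plan. Fix $f\in\mathcal H_g(\p M)$, put $u=P_q(f)\in b_q(M)$ and $u_0=P_0(f)\in b_0(M)$, so that $\Lambda_{g,q}f=\mathtt{tr}_\nu(u)$ and $\Lambda_{g,0}f=\mathtt{tr}_\nu(u_0)$, and set $w=u-u_0$. From $(-\Delta_g+q)u=0$ and $\Delta_g u_0=0$ one gets $\Delta_g w=\Delta_g u=qu\in L^2(M)$, so $w\in H_{\Delta_g}(M)$, while $\mathtt{tr}(w)=f-f=0$.

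The last assertion of Proposition~\ref{extensions of the trace operators} then yields $w\in H^2(M)$ and $\mathtt{tr}_\nu(w)\in H^{1/2}(\p M)$, together with the bound $\|w\|_{H^2(M)}\le C\|w\|_{H_{\Delta_g}(M)}$ (see its proof). Hence $\|w\|_{H^2(M)}\le C(1+\|q\|_{L^\infty(M)})\|u\|_{L^2(M)}+C\|u_0\|_{L^2(M)}$, which is $\le C\|f\|_{\mathcal H_g(\p M)}$ since $P_q$ and $P_0$ are bounded by Proposition~\ref{solvability of the Dirichlet problem}. As $(\Lambda_{g,q}-\Lambda_{g,0})f=\mathtt{tr}_\nu(u)-\mathtt{tr}_\nu(u_0)=\mathtt{tr}_\nu(w)$, this shows that $(\Lambda_{g,q}-\Lambda_{g,0})|_{\mathcal H_g(\p M)}$ maps boundedly into $H^{1/2}(\p M)$. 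Moreover the inclusion $\mathcal H_g(\p M)\hookrightarrow H^{-1/2}(\p M)$ is continuous, since $\|h\|_{H^{-1/2}(\p M)}=\|\mathtt{tr}(P_0h)\|_{H^{-1/2}(\p M)}\le C\|P_0h\|_{H_{\Delta_g}(M)}=C\|h\|_{\mathcal H_g(\p M)}$ using $P_0h\in b_0(M)$; so restriction of functionals embeds $H^{1/2}(\p M)$ continuously into $(\mathcal H_g(\p M))^*$, and composing gives the asserted boundedness of $(\Lambda_{g,q}-\Lambda_{g,0})|_{\mathcal H_g(\p M)}$ into $(\mathcal H_g(\p M))^*$.

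For the identity~\eqref{main integral identity}, I would take $h\in\mathcal H_g(\p M)$, set $v=P_0(h)\in b_0(M)\subset H_{\Delta_g}(M)$, and apply Corollary~\ref{generalized Green's identity} with $v$ playing the role of the $H_{\Delta_g}(M)$-argument and $w$ the role of the $H^2(M)$-argument. Since $(-\Delta_g)v=0$ and $\mathtt{tr}(w)=0$, all but one term drops and
$$(v\,|\,(-\Delta_g)w)_{L^2(M)}=-\langle\mathtt{tr}(v),\mathtt{tr}_\nu(w)\rangle_{H^{-1/2,1/2}(\p M)}=-\langle h,(\Lambda_{g,q}-\Lambda_{g,0})f\rangle_{H^{-1/2,1/2}(\p M)}.$$
On the other hand $(-\Delta_g)w=-\Delta_g w=-qu$, so the left-hand side equals $-(v\,|\,qu)_{L^2(M)}=-(P_0(h)\,|\,qP_q(f))_{L^2(M)}$, and comparing the two expressions yields~\eqref{main integral identity}. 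One may also note that~\eqref{main integral identity} by itself re-proves the boundedness, as its right-hand side is bounded by $\|q\|_{L^\infty(M)}\|P_0(h)\|_{L^2(M)}\|P_q(f)\|_{L^2(M)}\le C\|h\|_{\mathcal H_g(\p M)}\|f\|_{\mathcal H_g(\p M)}$.

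I do not anticipate a genuine obstacle here: the substance is the elliptic regularity step that $\mathtt{tr}(w)=0$ and $\Delta_g w\in L^2(M)$ force $w\in H^2(M)$, which is exactly the last statement of Proposition~\ref{extensions of the trace operators}. The only points requiring care are to keep $w$ — and not $v=P_0(h)$, which is merely in $H_{\Delta_g}(M)$ — as the $H^2(M)$-argument in Corollary~\ref{generalized Green's identity}, and to track the sign and the complex conjugation when passing from $(v\,|\,(-\Delta_g)w)_{L^2(M)}$ to $-(v\,|\,qu)_{L^2(M)}$, so that~\eqref{main integral identity} emerges with the stated sign.
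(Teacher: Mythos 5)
Your proposal is correct and follows essentially the same route as the paper: the same decomposition $w=P_q(f)-P_0(f)$ with $\mathtt{tr}(w)=0$, the same appeal to the last statement of Proposition~\ref{extensions of the trace operators} to get $w\in H^2(M)$, and the same application of Corollary~\ref{generalized Green's identity} with $P_0(h)$ in the $H_{\Delta_g}(M)$ slot and $w$ in the $H^2(M)$ slot, yielding \eqref{main integral identity}. The only cosmetic difference is that you establish boundedness via the quantitative bound $\|\mathtt{tr}_\nu(w)\|_{H^{1/2}(\p M)}\le C\|f\|_{\mathcal H_g(\p M)}$ and the embedding $\mathcal H_g(\p M)\hookrightarrow H^{-1/2}(\p M)$, whereas the paper reads boundedness off the continuity of the right-hand side of \eqref{main integral identity} — a variant you also note.
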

\begin{proof}
Suppose that $f,h\in \mathcal H_g(\p M)$. Let $u\in H_{\Delta_g}(M)$ be the unique solution to the boundary value problem
$$
(-\Delta_g+q)u=0\,\text{ \rm in }\Omega,\quad \mathtt{tr}(u)=f,
$$
and let $u_0$ be the unique solution to the boundary value problem
$$
(-\Delta_g)u_0=0\,\text{ \rm in }\Omega,\quad \mathtt{tr}(u_0)=f.
$$
Set $w:=u-u_0$, then we have
$$
(-\Delta_g)w=-qu\,\text{ \rm in }\Omega,\quad \mathtt{tr}(w)=0.
$$
By the last statement of Proposition~\ref{extensions of the trace operators}, we can conclude that $w\in H^2(M)$. Note that by Proposition~\ref{solvability of the Dirichlet problem}, there is $v_h\in H_{\Delta_g}(M)$ such that $(-\Delta_g)v_h=0$ and $\mathtt{tr}(v_h)=h$. Now, we can apply Corollary~\ref{generalized Green's identity} and get
\begin{align*}
(v_h|qu)_{L^2(M)}&=-(v_h|(-\Delta_g)w)_{L^2(M)}\\
&=-((-\Delta_g)v_h|w)_{L^2(M)}+\<h,\mathtt{tr}_\nu(w)\>_{H^{-1/2,1/2}(\p M)}.
\end{align*}
Since $(-\Delta_g)v_h=0$ and $\mathtt{tr}_\nu(w)=\mathtt{tr}_\nu(u-u_0)=(\Lambda_{g,q}-\Lambda_{g,0})f$, we obtain
\begin{equation}\label{main integral identity'}
\<h,(\Lambda_{g,q}-\Lambda_{g,0})f\>_{H^{-1/2,1/2}(\p M)}=(v_h|qu)_{L^2(M)}.
\end{equation}
The right-hand side depends continuously on $f,h\in \mathcal H_g(\p M)$. Hence, so does the left hand-side and this together with~\eqref{main integral identity'} implies that the result.
\end{proof}

\section{The Green's operators}\label{S4}
Let $(M,g)$ be an admissible manifold and let $q\in L^\infty(M)$. Let us introduce certain notations which will be used thoughout the paper. For $\tau\in\R$, we consider the following disjoint decomposition $\p M=S^+_\tau\cup S^-_\tau$, where
$$
S^+_\tau:=\{x\in \p M:\sgn(\tau)\p_\nu \varphi(x)\ge|3\tau|^{-1}\},\quad S^-_\tau:=\p M\setminus S^+_\tau.
$$
For $\delta>0$,  we can write $S^-_\tau=S^-_{\tau,\delta}\cup S^0_{\tau,\delta}$, where
\begin{align*}
S^-_{\tau,\delta}&:=\{x\in \p M:\sgn(\tau)\p_\nu \varphi(x)\le-\delta\},\\
S^0_{\tau,\delta}&:=\{x\in \p M:-\delta<\sgn(\tau)\p_\nu \varphi(x)<(3|\tau|)^{-1}\}.
\end{align*}

Constructions of Green's operators and the corresponding single layer potentials, as well as construction of complex geometrical optics solutions are based on the following Carleman estimates with boundary terms for the conjugated operator
$$
e^{\tau x_1}(-\Delta_g+q)e^{-\tau x_1}.
$$
\begin{Proposition}
Let $(M,g)$ be an admissible manifold and let $q\in L^\infty(M)$. There are constants $C_0,\tau_0>$ such that for all $\tau\in\R$ with $|\tau|\ge \tau_0$ and $\delta>0$, we have
\begin{multline}\label{Carleman estimate}
(\delta|\tau|)^{1/2}\|\p_\nu u\|_{S^-_{\tau,\delta}}+\|\p_\nu u\|_{S^0_{\tau,\delta}}+|\tau|\|u\|_{L^2(M)}+\|\nabla u\|_{L^2(M)}\\
\le C_0 \|e^{\tau x_1}(-\Delta_g+q)e^{-\tau x_1}u\|_{L^2(M)}+C_0|\tau|^{1/2}\|\p_\nu u\|_{S^+_\tau}
\end{multline}
for all $u\in C^\infty(M)$ with $u|_{\p M}=0$.
\end{Proposition}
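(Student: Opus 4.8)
The plan is to use the conformal structure of admissible manifolds to reduce matters to a Carleman estimate with boundary terms for the Laplace--Beltrami operator on the product $\R\times(M_0,g_0)$ with weight $\varphi(x)=x_1$, and then to run the Rellich--type positive commutator argument, tracking the boundary contributions carefully.

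\textbf{Reduction.} Since $(M,g)$ is admissible we may write $g=c\tilde g$ on a submanifold of $\R\times M_0$ with $\tilde g=e\oplus g_0$, $(M_0,g_0)$ simple and $c$ smooth, bounded above and below. By the conformal covariance of the Laplacian,
\[
e^{\tau x_1}(-\Delta_g+q)e^{-\tau x_1}u=c^{-\frac{n+2}{4}}\,e^{\tau x_1}\big(-\Delta_{\tilde g}+q_c\big)e^{-\tau x_1}\tilde u,\qquad \tilde u:=c^{\frac{n-2}{4}}u,
\]
where $q_c\in L^\infty(M)$ absorbs $q$ together with the curvature terms and, crucially, no first order term survives. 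As $c$ is bounded above and below, the norms $\|u\|_{L^2}$, $\|\nabla u\|_{L^2}$, $\|\p_\nu u\|_{\p M}$ and the sets $S^\pm_\tau$, $S^0_{\tau,\delta}$ for $g$ are comparable, up to bounded factors and up to lower order errors of size $O(|\tau|^{-1})\cdot\big(|\tau|\,\|\tilde u\|_{L^2}\big)$ that are absorbed for $|\tau|$ large, to the corresponding quantities for $\tilde g$; the potential $q_c$ is absorbed in the same way. Hence it suffices to prove the estimate for $P_\tau:=e^{\tau x_1}(-\Delta_{\tilde g})e^{-\tau x_1}$ on the product, and, replacing $x_1$ by $-x_1$ if necessary, we may assume $\tau>0$.

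\textbf{Interior coercivity and the boundary identity.} The weight $\varphi=x_1$ is a limiting Carleman weight on $(\R\times M_0,\tilde g)$; to gain coercivity we convexify it, $\varphi_\varepsilon=x_1+\frac{\varepsilon}{2\tau}x_1^2$ for a small fixed $\varepsilon>0$, so that $e^{\tau\varphi_\varepsilon}\asymp e^{\tau x_1}$ on the compact $M$ and it is enough to work with $\varphi_\varepsilon$. Splitting the conjugated operator into its formally self--adjoint and skew--adjoint parts, $P_{\tau,\varepsilon}=A+iB$ with $A=-\Delta_{\tilde g}-\tau^2|\nabla\varphi_\varepsilon|^2$ and $iB=2\tau\langle\nabla\varphi_\varepsilon,\nabla\,\cdot\,\rangle+\tau(\Delta_{\tilde g}\varphi_\varepsilon)$, and expanding $\|P_{\tau,\varepsilon}u\|^2$ for $u\in C^\infty(M)$ with $u|_{\p M}=0$, one obtains an identity of the form
\[
\|P_{\tau,\varepsilon}u\|_{L^2(M)}^2=\|Au\|_{L^2}^2+\|Bu\|_{L^2}^2+i\big([A,B]u,u\big)_{L^2}-2\tau\int_{\p M}(\p_\nu\varphi)\,|\p_\nu u|^2\,d\sigma+\mathcal R(u),
\]
where we have used $u|_{\p M}=0$, whence $\nabla u=(\p_\nu u)\nu$ and all tangential derivatives vanish on $\p M$, so that every boundary term produced by the integrations by parts is an integral of $|\p_\nu u|^2$ against a function; the leading one is displayed and $\mathcal R(u)$ is a remainder with $|\mathcal R(u)|\le C\int_{\p M}|\p_\nu u|^2+(\text{small interior terms})$. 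The positivity of the commutator is exactly the Carleman estimate of Dos Santos Ferreira--Kenig--Salo--Uhlmann on admissible manifolds: it is here that the simplicity of $(M_0,g_0)$ enters, through a strictly convex function on $M_0$ supplying the transversal pseudoconvexity that $x_1$ alone lacks, and together with the convexification it gives, for $\tau\ge\tau_0$,
\[
\|Au\|_{L^2}^2+\|Bu\|_{L^2}^2+i\big([A,B]u,u\big)_{L^2}\ \ge\ c_0\big(\tau^2\|u\|_{L^2}^2+\|\nabla u\|_{L^2}^2\big),
\]
so that
\[
c_0\big(\tau^2\|u\|_{L^2}^2+\|\nabla u\|_{L^2}^2\big)\ \le\ \|P_\tau u\|_{L^2}^2+2\tau\int_{\p M}(\p_\nu\varphi)\,|\p_\nu u|^2\,d\sigma+C\int_{\p M}|\p_\nu u|^2\,d\sigma.
\]

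\textbf{Boundary bookkeeping, and the main difficulty.} It remains to distribute the boundary integral $2\tau\int_{\p M}(\p_\nu\varphi)|\p_\nu u|^2$ using the trichotomy $\p M=S^+_\tau\cup S^-_{\tau,\delta}\cup S^0_{\tau,\delta}$ (recall $\tau>0$). On $S^+_\tau$, where $0<\p_\nu\varphi\le\|\p_\nu\varphi\|_\infty$, the contribution is at most $C|\tau|\,\|\p_\nu u\|_{S^+_\tau}^2$ and stays on the right, yielding the term $C_0|\tau|^{1/2}\|\p_\nu u\|_{S^+_\tau}$. On $S^-_{\tau,\delta}$, where $\p_\nu\varphi\le-\delta$, the contribution is $\le-2\delta\tau\,\|\p_\nu u\|_{S^-_{\tau,\delta}}^2\le0$ and is moved to the left, yielding $(\delta|\tau|)^{1/2}\|\p_\nu u\|_{S^-_{\tau,\delta}}$. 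On $S^0_{\tau,\delta}$ one has $\p_\nu\varphi<(3|\tau|)^{-1}$, so $2\tau\,\p_\nu\varphi<\tfrac23$ and the whole boundary contribution over $S^0_{\tau,\delta}$, including the piece of $\mathcal R(u)$ living there, is dominated by $\|\p_\nu u\|_{S^0_{\tau,\delta}}^2$; this is the mechanism by which $\|\p_\nu u\|_{S^0_{\tau,\delta}}$ becomes a controlled quantity on the left. Collecting the three pieces, dividing by a fixed constant, taking square roots and treating cross terms by Young's inequality gives the claimed inequality for all $|\tau|\ge\tau_0$ and all $\delta>0$. The interior coercivity is imported from the DKSU machinery with the simplicity of $M_0$ as a black box; the technical heart of the proposition is this last step — pinning the sign of each boundary term and engineering the decomposition so that the intermediate region $S^0_{\tau,\delta}$, on which $\p_\nu\varphi$ is too small to carry a useful sign yet the conjugation still produces an $O(|\tau|)\,|\p_\nu\varphi|=O(1)$ boundary weight, is absorbed via the $(3|\tau|)^{-1}$ threshold rather than costing a new term on the right.
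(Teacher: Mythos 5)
Your reduction to the product metric, the convexified weight, the $A/B$ splitting, and the bookkeeping on $S^+_\tau$ and $S^-_{\tau,\delta}$ are all sound, and they reproduce the Bukhgeim--Uhlmann-type estimate in which $(\delta|\tau|)^{1/2}\|\p_\nu u\|_{S^-_{\tau,\delta}}+|\tau|\|u\|_{L^2(M)}+\|\nabla u\|_{L^2(M)}$ is controlled by $\|e^{\tau x_1}(-\Delta_g+q)e^{-\tau x_1}u\|_{L^2(M)}+|\tau|^{1/2}\|\p_\nu u\|_{S^+_\tau}$. (For the record, the paper does not prove this proposition at all: it cites Kenig--Salo \cite[Proposition~4.2]{KSa}, so you are attempting strictly more than the text does.) The genuine gap is exactly the term you call the technical heart: the unweighted $\|\p_\nu u\|_{S^0_{\tau,\delta}}$ on the left. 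The expansion of $\|P_{\tau,\varepsilon}u\|^2$ produces a single boundary term, weighted by $\p_\nu\varphi_\varepsilon$ (with $u|_{\p M}=0$ all other integrations by parts are boundary-free, so the boundary remainder you allow in $\mathcal R(u)$ does not occur -- and if it did occur with uncontrolled sign it would itself be fatal). On $S^0_{\tau,\delta}$ that weight is small and of either sign. Your observation that $2|\tau|\,\sgn(\tau)\p_\nu\varphi<\frac{2}{3}$ there shows only that the $S^0$ contribution, sitting on the right-hand side, is bounded by $\frac{2}{3}\|\p_\nu u\|_{S^0_{\tau,\delta}}^2$; being ``dominated by'' that quantity does not create a positive multiple of it on the left, and the identity supplies nothing on the left into which it could be absorbed. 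So your argument yields an estimate in which $\|\p_\nu u\|_{S^0_{\tau,\delta}}$ appears harmlessly on the wrong side, not the stated estimate in which it is controlled.

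Closing this gap is precisely the new content of \cite[Proposition~4.2]{KSa} relative to the older Carleman estimates with boundary terms, and it needs an additional ingredient: a separate bound for the normal derivative on the nearly tangential region with right-hand side $\|e^{\tau x_1}(-\Delta_g+q)e^{-\tau x_1}u\|_{L^2(M)}+|\tau|^{1/2}\|\p_\nu u\|_{S^+_\tau}$. Note that the obvious fix fails quantitatively: the Rellich-type inequality $\|\p_\nu u\|_{\p M}^2\lesim \|\Delta_g u\|_{L^2(M)}\|\nabla u\|_{L^2(M)}+\|\nabla u\|_{L^2(M)}^2$, with $\Delta_g u$ rewritten through the conjugated equation and the interior terms estimated by the already-proved bounds, gives $\|\p_\nu u\|_{\p M}\lesim|\tau|^{1/2}\|e^{\tau x_1}(-\Delta_g+q)e^{-\tau x_1}u\|_{L^2(M)}+|\tau|\|\p_\nu u\|_{S^+_\tau}$, which is weaker by a factor $|\tau|^{1/2}$ than what the proposition asserts; so the $(3|\tau|)^{-1}$ threshold has to be exploited through a finer multiplier/integration-by-parts argument, not through sign bookkeeping alone. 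Two smaller points: the interior positivity you import is not where simplicity of $(M_0,g_0)$ enters (the convexified weight alone gives it; simplicity is used later for the CGO amplitudes and the ray transform), and in the conformal reduction the regions $S^\pm_\tau$, $S^-_{\tau,\delta}$, $S^0_{\tau,\delta}$ are defined through $\p_\nu\varphi$ for $g$, which rescales by $c^{-1/2}$ under $g=c\tilde g$, so the thresholds $(3|\tau|)^{-1}$ and $\delta$ must be adjusted by bounded factors when passing to $\tilde g$; this is routine but should be said.
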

\begin{proof}
This estimate was proven by Kenig and Salo; see \cite[Proposition~4.2]{KSa}.
\end{proof}
Define
$$
\mathcal D^\pm_\tau=\{u\in C^\infty(M):u|_{\p M}=\mathtt{tr}_\nu(u)|_{S^\pm_\tau}=0\}.
$$

The aim of this section is to prove the following result.
\begin{Theorem}\label{existence of G_tau operator}
Let $(M,g)$ be an admissible manifold. There is a constant $\tau_0>0$ such that for all $\tau\in\R$ with $|\tau|\ge\tau_0$, there is a linear operator
$$
G_\tau:L^2(M)\to L^2(M)
$$
such that
$$
e^{\tau x_1}(-\Delta_g)e^{-\tau x_1}G_\tau v=v,\quad v\in L^2(M)
$$
and
\begin{equation}\label{main difficulty in the proof of existence of G_tau}
G_\tau e^{\tau x_1}(-\Delta_g)e^{-\tau x_1}u=u,\quad u\in\mathcal D^+_{-\tau}.
\end{equation}
This operator satisfies
$$
\|G_\tau f\|_{L^2(M)}\le \frac{C_0}{|\tau|}\|f\|_{L^2(M)},\quad f\in L^2(M),
$$
where $C_0>0$ is independent of $\tau$. Moreover, $G_\tau:L^2(M)\to e^{\tau x_1}H_{\Delta_g}(M)$ and for all $v\in L^2(M)$ support of $\mathtt{tr}(G_\tau v)$ is in $S^+_{\tau}$.
\end{Theorem}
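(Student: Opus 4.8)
The plan is to construct $G_\tau$ by the standard Hahn–Banach / Riesz duality argument that turns a Carleman estimate into a solvability statement, and then to extract the extra properties (the left-inverse identity on $\mathcal D^+_{-\tau}$, the mapping into $e^{\tau x_1}H_{\Delta_g}(M)$, and the support of the trace) by carefully choosing which functional one represents. Write $L_\tau = e^{\tau x_1}(-\Delta_g)e^{-\tau x_1}$ for the conjugated operator and note that $L_\tau^* = e^{-\tau x_1}(-\Delta_g)e^{\tau x_1} = L_{-\tau}$. The Carleman estimate \eqref{Carleman estimate} (with $q=0$), applied with $-\tau$ in place of $\tau$, gives for all $w \in \mathcal D^+_{-\tau}$ the bound $|\tau|\,\|w\|_{L^2(M)} \le C_0\|L_{-\tau} w\|_{L^2(M)}$, since for such $w$ the boundary term on $S^+_{-\tau}$ vanishes and the left side dominates $|\tau|\|w\|_{L^2(M)}$.

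First I would fix $v \in L^2(M)$ and define a linear functional on the subspace $\{L_{-\tau} w : w \in \mathcal D^+_{-\tau}\} \subset L^2(M)$ by $L_{-\tau} w \mapsto (w | \overline{v})_{L^2(M)}$; the Carleman estimate shows this is well-defined (injectivity of $L_{-\tau}$ on $\mathcal D^+_{-\tau}$) and bounded with norm $\le C_0|\tau|^{-1}\|v\|_{L^2(M)}$. Hahn–Banach extends it to all of $L^2(M)$, and Riesz representation produces $G_\tau v \in L^2(M)$ with $\|G_\tau v\|_{L^2(M)} \le C_0|\tau|^{-1}\|v\|_{L^2(M)}$ and $(L_{-\tau} w | \overline{G_\tau v})_{L^2(M)} = (w|\overline{v})_{L^2(M)}$ for all $w \in \mathcal D^+_{-\tau}$. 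Linearity of $G_\tau$ in $v$ follows from the construction (choosing the extension linearly, e.g. via a fixed Hahn–Banach-extended projection, or simply observing uniqueness is not needed — one picks any linear selection). Testing against $w \in C_c^\infty(M^{\rm int}) \subset \mathcal D^+_{-\tau}$ gives $L_\tau G_\tau v = v$ in the distributional sense, which together with $v \in L^2(M)$ shows $\Delta_g(e^{-\tau x_1} G_\tau v) \in L^2(M)$, i.e. $G_\tau v \in e^{\tau x_1} H_{\Delta_g}(M)$.

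Next I would establish the left-inverse identity \eqref{main difficulty in the proof of existence of G_tau}. Given $u \in \mathcal D^+_{-\tau}$, set $f = L_\tau u$ and compare $G_\tau f$ with $u$. For any $w \in \mathcal D^+_{-\tau}$, the generalized Green's identity (Corollary~\ref{generalized Green's identity}, applied after unwinding the conjugation — here all boundary traces of $u$ and $w$ on the relevant parts vanish) gives $(L_{-\tau} w | \overline{u})_{L^2(M)} = (w | \overline{L_\tau u})_{L^2(M)} = (w|\overline{f})_{L^2(M)} = (L_{-\tau}w|\overline{G_\tau f})_{L^2(M)}$, so $G_\tau f - u$ is orthogonal to the range of $L_{-\tau}$ on $\mathcal D^+_{-\tau}$; but I want equality, so one must instead argue that $G_\tau f$ is itself in $b_0$-type class and use a uniqueness statement. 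The cleanest route is: $L_\tau(G_\tau f - u) = f - f = 0$, $G_\tau f - u \in e^{\tau x_1} H_{\Delta_g}(M)$, and the boundary conditions force $e^{-\tau x_1}(G_\tau f - u)$ to be a $\Delta_g$-harmonic function whose Cauchy data is constrained enough to vanish — this is where one invokes that $0$ is not a Dirichlet eigenvalue together with the support condition on the trace. For the support statement, test the defining relation against $w \in \mathcal D^+_{-\tau}$ whose normal derivative is supported on $S^+_\tau$ and use Green's identity again to read off that $\langle \mathtt{tr}_\nu w, \mathtt{tr}(G_\tau v)\rangle$ is the only surviving boundary pairing, forcing $\supp \mathtt{tr}(G_\tau v) \subset S^+_\tau$.

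The main obstacle I anticipate is precisely the left-inverse identity and the support claim: the Hahn–Banach construction only gives a one-sided relation $L_\tau G_\tau = \id$ and an orthogonality statement, and upgrading this to the genuine two-sided identity on $\mathcal D^+_{-\tau}$ plus pinning down the trace support requires a delicate bookkeeping of boundary terms in the generalized Green's identity — one has to verify that the conjugation by $e^{\pm\tau x_1}$ interacts correctly with the trace maps, that the functional on $\{L_{-\tau}w\}$ can be chosen so that $G_\tau$ lands in the right subspace, and that the decomposition $\p M = S^+_\tau \cup S^-_\tau$ is respected. I would handle this by working throughout with the pair $(\mathtt{tr}, \mathtt{tr}_\nu)$ of $e^{-\tau x_1} G_\tau v$, showing it solves a well-posed mixed Cauchy problem whose only solution has the asserted support. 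Everything else — the norm bound, $L_\tau G_\tau = \id$, and membership in $e^{\tau x_1}H_{\Delta_g}(M)$ — is a direct consequence of the Carleman estimate and the definition.
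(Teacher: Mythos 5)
Your Hahn--Banach/Riesz construction reproduces what the paper calls $H_\tau$ (Proposition~\ref{solvability result giving H_tau}, Corollary~\ref{corollary of solvability result giving H_tau}): a right inverse with the $O(|\tau|^{-1})$ bound, mapping into $e^{\tau x_1}H_{\Delta_g}(M)$, with the trace support pinned down by testing against the admissible class. The genuine gap is the left-inverse identity \eqref{main difficulty in the proof of existence of G_tau}, which is exactly the ``main difficulty'' the label refers to. As you yourself observe, for $u\in\mathcal D^+_{-\tau}$ the comparison of $G_\tau(e^{\tau x_1}(-\Delta_g)e^{-\tau x_1}u)$ with $u$ only shows that the difference is annihilated by $e^{\tau x_1}(-\Delta_g)e^{-\tau x_1}$ and has trace supported in the distinguished boundary set, i.e.\ it lies in the space $\mathcal A_\tau$ of Lemma~\ref{orthogonal projection of pi^perp_tau}. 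Your proposed rescue --- that the ``Cauchy data is constrained enough to vanish,'' invoking that $0$ is not a Dirichlet eigenvalue --- fails: the difference has Dirichlet trace vanishing only on \emph{part} of the boundary and its Neumann data is completely unconstrained, so no uniqueness theorem applies; $\mathcal A_\tau$ is a nontrivial (infinite-dimensional) space of conjugated harmonic functions. (Also, Theorem~\ref{existence of G_tau operator} concerns $-\Delta_g$ alone, so no eigenvalue hypothesis is even available.) The gap cannot be closed by a sharper uniqueness argument: the bare duality operator genuinely does \emph{not} satisfy \eqref{main difficulty in the proof of existence of G_tau} in general, since for $u\in\mathcal D^+_{-\tau}$ it returns the projected solution rather than $u$ itself, the discrepancy being precisely a component in $\mathcal A_\tau$.

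The paper's missing idea is an operator-level correction plus an adjoint symmetry. One first normalizes the duality solution so that $\pi_\tau H_\tau=H_\tau$, where $\pi_\tau$ projects onto the closure of $e^{-\tau x_1}(-\Delta_g)e^{\tau x_1}\mathcal D^+_\tau$, and then sets $G_\tau:=H_\tau+\pi_\tau^\perp H_{-\tau}^*$. The added term lies in $\mathcal A_\tau$, so the equation, the norm bound and the trace support are unaffected; its purpose is to make $G_\tau^*=G_{-\tau}$, which is proved via the technical Lemma~\ref{technical lemma} ($T_\tau^*=T_{-\tau}$ with $T_\tau=H_\tau\pi_{-\tau}$). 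Once $G_\tau^*=G_{-\tau}$ is known, \eqref{main difficulty in the proof of existence of G_tau} follows from the short duality computation $(f|G_\tau e^{\tau x_1}(-\Delta_g)e^{-\tau x_1}u)=(G_{-\tau}f|e^{\tau x_1}(-\Delta_g)e^{-\tau x_1}u)=(e^{-\tau x_1}(-\Delta_g)e^{\tau x_1}G_{-\tau}f|u)=(f|u)$, the boundary terms vanishing because $\mathtt{tr}(G_{-\tau}f)$ is supported in $S^+_{-\tau}$ while $u\in\mathcal D^+_{-\tau}$. None of this appears in your proposal. A secondary point: with your test space $\mathcal D^+_{-\tau}$, the Green's identity argument you sketch yields vanishing of $\mathtt{tr}(G_\tau v)$ on $S^-_{-\tau}$, i.e.\ support in $S^+_{-\tau}$ rather than the $S^+_\tau$ asserted in the theorem, so the pairing of test class, Carleman estimate \eqref{Carleman estimate} and projection $\pi_\tau$ has to be matched more carefully than in your sketch.
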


Let $\pi_\tau$ be the orthogonal projection onto $\mathcal L_{\tau}$ the closure of $e^{-\tau x_1}(-\Delta_g)e^{\tau x_1}\mathcal D^+_{\tau}$ in $L^2(M)$.

\begin{Lemma}\label{orthogonal projection of pi^perp_tau}
Let $\pi_\tau^\perp:=\id-\pi_\tau$. Then $\pi_\tau^\perp$ is the orthogonal projection onto
$$
\mathcal A_\tau=\{u\in L^2(M):e^{\tau x_1}(-\Delta_g)e^{-\tau x_1} u=0\text{ \rm and }\mathtt{tr}(u)\text{ \rm is supported in }S^+_{\tau}\}.
$$
\end{Lemma}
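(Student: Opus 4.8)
The plan is to reduce the statement to an identity of subspaces. Since $\mathcal L_\tau$ is by construction a closed subspace of $L^2(M)$ and $\pi_\tau$ is the orthogonal projection onto it, $\pi_\tau^\perp=\id-\pi_\tau$ is automatically the orthogonal projection onto $\mathcal L_\tau^\perp$, so the lemma amounts to the identity $\mathcal A_\tau=\mathcal L_\tau^\perp$. (Morally $\mathcal A_\tau$ should be the kernel of the formal adjoint $e^{\tau x_1}(-\Delta_g)e^{-\tau x_1}$ equipped with the adjoint boundary condition of $\mathcal D^+_\tau$, namely Dirichlet data supported in $S^+_\tau$.) Thus I want to characterise the $u\in L^2(M)$ with $\bigl(u\,\big|\,e^{-\tau x_1}(-\Delta_g)e^{\tau x_1}\phi\bigr)_{L^2(M)}=0$ for all $\phi\in\mathcal D^+_\tau$. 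Writing $v:=e^{-\tau x_1}u$ and $\psi:=e^{\tau x_1}\phi$ — legitimate inside the inner product since the weight is real — turns the pairing into $\bigl(v\,\big|\,(-\Delta_g)\psi\bigr)_{L^2(M)}$ with $\psi\in C^\infty(M)$, $\psi|_{\p M}=0$ and $\mathtt{tr}_\nu(\psi)|_{S^+_\tau}=0$; the weight drops out of the boundary conditions because $\psi$ vanishes on $\p M$.

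First I would test against $\phi\in C_c^\infty(M^{\mathrm{int}})\subset\mathcal D^+_\tau$, which forces $(-\Delta_g)v=0$ in $M$, hence $v\in H_{\Delta_g}(M)$, equivalently $e^{\tau x_1}(-\Delta_g)e^{-\tau x_1}u=0$. Granting this, Corollary~\ref{generalized Green's identity} applies to $v\in H_{\Delta_g}(M)$ and $\psi\in H^2(M)$; using $(-\Delta_g)v=0$ and $\mathtt{tr}(\psi)=0$ it collapses to
\[
\bigl(u\,\big|\,e^{-\tau x_1}(-\Delta_g)e^{\tau x_1}\phi\bigr)_{L^2(M)}=-\bigl\langle\mathtt{tr}(v),\mathtt{tr}_\nu(\psi)\bigr\rangle_{H^{-1/2,1/2}(\p M)}.
\]
Since $\psi$ vanishes on $\p M$ we have $\mathtt{tr}_\nu(\psi)=e^{\tau x_1}\mathtt{tr}_\nu(\phi)$ on $\p M$, and $\phi\mapsto\mathtt{tr}_\nu(\phi)$ carries $\mathcal D^+_\tau$ onto $\{k\in C^\infty(\p M):k|_{S^+_\tau}=0\}$ (a prescribed such $k$ is realised by extending it off $\p M$ through a collar). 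Therefore $u\in\mathcal L_\tau^\perp$ if and only if $(-\Delta_g)(e^{-\tau x_1}u)=0$ and $\mathtt{tr}(e^{-\tau x_1}u)$ annihilates every smooth function vanishing on $S^+_\tau$. As multiplication by the nonvanishing factor $e^{-\tau x_1}$ preserves supports, this second condition is exactly ``$\mathtt{tr}(u)$ is supported in $S^+_\tau$'', and one recovers the two defining conditions of $\mathcal A_\tau$.

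The one point that needs real care — the main obstacle — is matching ``$\mathtt{tr}(u)$ annihilates all smooth functions vanishing on $S^+_\tau$'' with ``$\supp\mathtt{tr}(u)\subset S^+_\tau$''. One implication is immediate (test against $C_c^\infty(\p M\setminus S^+_\tau)$). The converse is delicate because $s=\tfrac12$ is the endpoint Sobolev exponent for boundary traces: I would show that any smooth $k$ with $k|_{S^+_\tau}=0$ is an $H^{1/2}(\p M)$-limit of functions vanishing near $S^+_\tau$. Concretely, take $\chi_j\in C^\infty(\p M)$ vanishing on the $1/j$-collar of $S^+_\tau$ with $|\nabla\chi_j|\lesssim j$; then $\chi_j k\to k$ in $L^2(\p M)$, while the bound $|k(x)|\lesssim\dist(x,S^+_\tau)$ (valid since $k$ is smooth and vanishes on $S^+_\tau$) combined with the regularity of $S^+_\tau=\{\sgn(\tau)\,\p_\nu\varphi\ge|3\tau|^{-1}\}$ as a superlevel set of a smooth function on $\p M$ (so its $1/j$-collars are thin) gives $\|k\,\nabla\chi_j\|_{L^2(\p M)}\to0$, hence $\chi_j k\to k$ in $H^1(\p M)$ and a fortiori in $H^{1/2}(\p M)$; passing to the limit in $\langle\mathtt{tr}(e^{-\tau x_1}u),\chi_j k\rangle=0$ finishes the argument. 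The remaining items — closedness of $\mathcal A_\tau$ in $L^2(M)$ and the routine bookkeeping of weights and conjugations — are straightforward.
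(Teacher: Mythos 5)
Your argument is correct and follows essentially the same route as the paper: identify $\pi_\tau^\perp$ with projection onto $\mathcal L_\tau^\perp$ and characterize that orthogonal complement via testing against $C_c^\infty(M^{\rm int})$ and the generalized Green's identity of Corollary~\ref{generalized Green's identity}. The only difference is one of completeness, in your favor: you prove both implications (the paper proves only the inclusion $\mathcal A_\tau\subset\mathcal L_\tau^\perp$ and cites \cite{NaS} for the converse), and your cutoff approximation handling the endpoint $H^{-1/2}$--$H^{1/2}$ pairing of a distribution supported in $S^+_\tau$ against a function merely vanishing pointwise on $S^+_\tau$ supplies a justification that the paper's one-line appeal to the support condition leaves implicit.
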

\begin{proof}
It is enough to show that $u$ is orthogonal to $e^{-\tau x_1}(-\Delta_g)e^{\tau x_1}\mathcal D^+_{\tau}$ if and only if $u$ is in $\mathcal A_\tau$. Suppose that $u\in \mathcal A_\tau$. Then for $v\in \mathcal D^+_{\tau}$, we have
$$
(u|e^{-\tau x_1}(-\Delta_g)e^{\tau x_1}v)_{L^2(M)}=(e^{\tau x_1}(-\Delta_g)e^{-\tau x_1}u|v)_{L^2(M)}-(\mathtt{tr}(u),\mathtt{tr}_\nu(v))_{H^{-1/2,1/2}(\p M)}.
$$
Since $\mathtt{tr}(u)$ supported in $S^+_{\tau}$, $\mathtt{tr}_\nu(v)=0$ in $S^+_{\tau}$ and $e^{\tau x_1}(-\Delta_g)e^{-\tau x_1} u=0$, we obtain 
$$
(u|e^{-\tau x_1}(-\Delta_g)e^{\tau x_1}v)_{L^2(M)}=0,
$$
which means that $u$ is orthogonal to $e^{-\tau x_1}(-\Delta_g)e^{\tau x_1}\mathcal D^+_{\tau}$. Converse is as in~\cite[Lemma~3.3]{NaS}.
\end{proof}

\begin{Proposition}\label{solvability result giving H_tau}
Let $(M,g)$ be an admissible manifold. There is $\tau_0>0$ such that for all $\tau\in\R$ with $|\tau|\ge \tau_0$ and for a given $v\in L^2(M)$, there is a unique solution $u\in L^2(M)$ of the equation
$$
e^{\tau x_1}(-\Delta_g)e^{-\tau x_1} u=v\quad\text{in}\quad M
$$
such that $\mathtt{tr}(u)$ is supported in $S^+_{\tau}$, $\pi_\tau u=u$ and $\|u\|_{L^2(M)}\le C_0 \frac{1}{|\tau|}\|v\|_{L^2(M)}$ with constant $C_0>0$ independent of $\tau$.
\end{Proposition}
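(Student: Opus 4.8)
The plan is to construct $u$ by Hilbert-space duality: I would reduce the statement to the Riesz representation, on the Hilbert space $\mathcal L_\tau$, of a suitable bounded linear functional. The crucial ingredient is an a priori estimate. Applying the Carleman estimate \eqref{Carleman estimate} (with $q=0$) to the conjugated operator $e^{-\tau x_1}(-\Delta_g)e^{\tau x_1}$, the point being that the defining condition of $\mathcal D^+_\tau$ makes the boundary term on the right-hand side vanish while one retains only the term $|\tau|\,\|w\|_{L^2(M)}$ on the left, one obtains, for $|\tau|\ge\tau_0$,
$$
\|w\|_{L^2(M)}\le\frac{C_0}{|\tau|}\,\|e^{-\tau x_1}(-\Delta_g)e^{\tau x_1}w\|_{L^2(M)},\qquad w\in\mathcal D^+_\tau,
$$
with $C_0$ independent of $\tau$. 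In particular $w\mapsto e^{-\tau x_1}(-\Delta_g)e^{\tau x_1}w$ is injective on $\mathcal D^+_\tau$; this also follows directly, since if it vanishes then $e^{\tau x_1}w$ is harmonic with zero Dirichlet trace, hence zero.

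Given $v\in L^2(M)$, I would then define a linear functional on the subspace $e^{-\tau x_1}(-\Delta_g)e^{\tau x_1}\mathcal D^+_\tau$, which is dense in $\mathcal L_\tau$ by definition, by $e^{-\tau x_1}(-\Delta_g)e^{\tau x_1}w\mapsto(w|v)_{L^2(M)}$. It is well defined by the injectivity just noted, and by the a priori estimate it is bounded, with respect to the $L^2$-norm, by $C_0|\tau|^{-1}\|v\|_{L^2(M)}$. Extending it to $\mathcal L_\tau$ and applying the Riesz representation theorem produces a unique $u\in\mathcal L_\tau$ with $\pi_\tau u=u$, $\|u\|_{L^2(M)}\le C_0|\tau|^{-1}\|v\|_{L^2(M)}$, and
$$
(u\,|\,e^{-\tau x_1}(-\Delta_g)e^{\tau x_1}w)_{L^2(M)}=(v|w)_{L^2(M)},\qquad w\in\mathcal D^+_\tau.
$$

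Next I would read off the equation and the support property from this identity. Testing it against $w\in C^\infty_c(M^{\mathrm{int}})\subset\mathcal D^+_\tau$ and unwinding the conjugation gives $(-\Delta_g)(e^{-\tau x_1}u)=e^{-\tau x_1}v$ in the sense of distributions; as the right-hand side lies in $L^2(M)$, this shows $e^{-\tau x_1}u\in H_{\Delta_g}(M)$, i.e. $u\in e^{\tau x_1}H_{\Delta_g}(M)$, and $e^{\tau x_1}(-\Delta_g)e^{-\tau x_1}u=v$ in $M$; in particular $\mathtt{tr}(u)\in H^{-1/2}(\p M)$ is defined. Applying Corollary~\ref{generalized Green's identity} to $e^{-\tau x_1}u\in H_{\Delta_g}(M)$ and $e^{\tau x_1}w\in H^2(M)$ for an arbitrary $w\in C^\infty(M)$ with $\mathtt{tr}(w)=0$, and simplifying the exponential factors, yields
$$
(v|w)_{L^2(M)}=(u\,|\,e^{-\tau x_1}(-\Delta_g)e^{\tau x_1}w)_{L^2(M)}+\<\mathtt{tr}(u),\mathtt{tr}_\nu(w)\>_{H^{-1/2,1/2}(\p M)}.
$$
Comparison with the previous display (valid for $w\in\mathcal D^+_\tau$) forces $\<\mathtt{tr}(u),\mathtt{tr}_\nu(w)\>_{H^{-1/2,1/2}(\p M)}=0$ for every $w\in\mathcal D^+_\tau$; since, by surjectivity of the trace map, $\mathtt{tr}_\nu(w)$ runs over all smooth functions on $\p M$ supported in the open set $S^-_\tau$, it follows that $\mathtt{tr}(u)$ vanishes on $S^-_\tau$, i.e. $\supp\mathtt{tr}(u)\subset S^+_\tau$. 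This establishes existence.

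Uniqueness is immediate from Lemma~\ref{orthogonal projection of pi^perp_tau}: if $u_1,u_2$ both satisfy the conclusion, then $u_1-u_2$ solves $e^{\tau x_1}(-\Delta_g)e^{-\tau x_1}(u_1-u_2)=0$ with $\supp\mathtt{tr}(u_1-u_2)\subset S^+_\tau$, hence $u_1-u_2\in\mathcal A_\tau=\mathcal L_\tau^\perp$, while $\pi_\tau u_i=u_i$ gives $u_1-u_2\in\mathcal L_\tau$; therefore $u_1=u_2$. I expect the only genuine difficulty to be the a priori estimate of the first step, and specifically the careful bookkeeping of the sign of $\tau$ — which governs both which of the sets $S^\pm_\tau$ appears in \eqref{Carleman estimate} and which of them enters the definition of $\mathcal D^+_\tau$ — so that the boundary term in the Carleman estimate is indeed annihilated by membership in $\mathcal D^+_\tau$. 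Once that estimate is in hand, the remainder is routine Hilbert-space duality together with the generalized Green identity already recorded in Corollary~\ref{generalized Green's identity}.
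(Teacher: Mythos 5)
Your argument is essentially the paper's own proof: the same linear functional $e^{-\tau x_1}(-\Delta_g)e^{\tau x_1}w\mapsto(v|w)_{L^2(M)}$ on $\mathcal D^+_{\tau}$, bounded via the Carleman estimate \eqref{Carleman estimate}, extended to $\mathcal L_{\tau}$ and represented by Riesz to obtain $u$ with $\pi_\tau u=u$, the equation read off from interior test functions, the support of $\mathtt{tr}(u)$ in $S^+_{\tau}$ obtained from Corollary~\ref{generalized Green's identity}, and uniqueness from Lemma~\ref{orthogonal projection of pi^perp_tau}. The sign bookkeeping you flag is indeed the only delicate point; your a priori estimate is written in terms of $\|e^{-\tau x_1}(-\Delta_g)e^{\tau x_1}w\|_{L^2(M)}$, which is the form the duality step actually needs, whereas the paper's displayed bound carries the opposite conjugation --- a discrepancy inherited from the paper's own sign conventions relating \eqref{Carleman estimate}, $S^\pm_\tau$ and $\mathcal D^+_\tau$, not a defect introduced by your argument.
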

\begin{proof}
First, we show the existence. Define a linear functional $L$ on $e^{-\tau x_1}(-\Delta_g)e^{\tau x_1}\mathcal D^+_{\tau}$ by
$$
L(e^{-\tau x_1}(-\Delta_g)e^{\tau x_1} w)=(v|w)_{L^2(M)},\quad w\in \mathcal D^+_{\tau}.
$$
Then we have
\begin{align*}
|L(e^{-\tau x_1}(-\Delta_g)e^{\tau x_1} w)|&\le \|v\|_{L^2(M)}\|w\|_{L^2(M)}\\
&\le C_0 \frac{1}{|\tau|}\|v\|_{L^2(M)}\|e^{\tau x_1}(-\Delta_g)e^{-\tau x_1}w\|_{L^2(M)},
\end{align*}
where in the last step we have used the Carleman estimate \eqref{Carleman estimate}. By the Hahn-Banach theorem, we may extend $L$ to a linear continuous functional $\widetilde L$ on $\mathcal L_{\tau}$. On the orthogonal complement of $\mathcal L_{\tau}$ in $L^2(M)$ we define $\widetilde L$ to be zero. By the Riesz representation theorem, there exists $u\in L^2(M)$ such that
$$
\widetilde L(f)=(u|f)_{L^2(M)},\quad f\in L^2(M).
$$
In particular,
\begin{equation}\label{widetilde L acting on D_sgn}
\begin{aligned}
(u|e^{-\tau x_1}(-\Delta_g)e^{\tau x_1} w)_{L^2(M)}&=\widetilde L(e^{-\tau x_1}(-\Delta_g)e^{\tau x_1}w)\\
&=(v|w)_{L^2(M)},\qquad w\in\mathcal D^+_{\tau}.
\end{aligned}
\end{equation}
If we take $w\in C^\infty_0(M^{\rm int})$ in the above equation, we obtain $e^{-\tau x_1}(-\Delta_g)e^{\tau x_1}u=v$. Moreover,
$$
\|u\|_{L^2(M)}\le C_0 \frac{1}{|\tau|}\|v\|_{L^2(M)}.
$$
Since $\widetilde L\equiv 0$ on the orthogonal complement of $\mathcal L_{\tau}$ in $L^2(M)$, we have that $u\in\mathcal L_{\tau}$ and hence $\pi_\tau u=u$.
\medskip

To finish the proof, we need to show that $\mathtt{tr}(u)$ is supported in $S^+_{\tau}$. For arbitrary $w\in \mathcal D^+_{\tau}$, using the generalized Green's identity from Corollary~\ref{generalized Green's identity}, we get
$$
(u|e^{-\tau x_1}(-\Delta_g)e^{\tau x_1} w)_{L^2(M)}=\<\mathtt{tr}(u),\mathtt{tr}_\nu(w)\>_{H^{-1/2,1/2}(\p M)}+(v|w)_{L^2(M)}.
$$
According to \eqref{widetilde L acting on D_sgn}, we have $\<\mathtt{tr}_\nu(w),\mathtt{tr}(u)\>_{\p M}=0$. Since $w\in \mathcal D^+_{\tau}$ was arbitrary, we can conclude that $\mathtt{tr}(u)$ is supported in $S^+_{\tau}$.
\medskip

Now, we prove uniqueness. Suppose that $u'\in L^2(M)$ is another solution of the equation $e^{\tau x_1}(-\Delta_g)e^{-\tau x_1} u'=v$ satisfying all the conditions of the proposition. Then $e^{\tau x_1}(-\Delta_g)e^{-\tau x_1}(u-u')=0$, $\mathtt{tr}(u-u')$ is supported in $S^+_{\tau}$, and $\pi_\tau(u-u')=u-u'$. However, by Lemma~\ref{orthogonal projection of pi^perp_tau}, $\pi_\tau(u-u')=0$. Thus, we obtain $u-u'=0$ which finishes the proof.
\end{proof}

Let $H_\tau:L^2(M)\to L^2(M)$ be the solution operator obtained in the previous result. In other words, the operator $H_\tau$ is defined by $H_\tau v=u$, where $u$ and $v$ are as in Proposition~\ref{solvability result giving H_tau}. The following is an immediate corollary of the preceeding result.

\begin{Corollary}\label{corollary of solvability result giving H_tau}
Let $(M,g)$ be an admissible manifold. There is $\tau_0>0$ such that for all $\tau\in\R$ with $|\tau|\ge \tau_0$, there is a linear operator
$$
H_\tau:L^2(M)\to L^2(M)
$$
such that
$$
e^{\tau x_1}(-\Delta_g)e^{-\tau x_1}H_\tau v=v,\quad v\in L^2(M)
$$
and $\pi_\tau H_\tau=H_\tau$. This operator satisfies
$$
\|H_\tau f\|_{L^2(M)}\le C_0 \frac{1}{|\tau|}\|f\|_{L^2(M)}
$$
where $C_0>0$ is independent of $\tau$. Moreover, $H_\tau:L^2(M)\to e^{\tau x_1}H_{\Delta_g}(M)$ and for all $v\in L^2(M)$ support of $\mathtt{tr}(H_\tau v)$ is in $S^+_{\tau}$.
\end{Corollary}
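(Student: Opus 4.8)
The plan is to read $H_\tau$ off directly from Proposition~\ref{solvability result giving H_tau}. For $|\tau|\ge\tau_0$ and each $v\in L^2(M)$, that proposition furnishes a solution $u\in L^2(M)$ of $e^{\tau x_1}(-\Delta_g)e^{-\tau x_1}u=v$ with $\mathtt{tr}(u)$ supported in $S^+_\tau$, with $\pi_\tau u=u$, and with $\|u\|_{L^2(M)}\le C_0|\tau|^{-1}\|v\|_{L^2(M)}$, and, crucially, this $u$ is the \emph{only} element of $L^2(M)$ solving the equation with trace supported in $S^+_\tau$ and with $\pi_\tau u=u$. I would therefore set $H_\tau v:=u$; the uniqueness clause is exactly what makes this well defined.

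Next I would check that $H_\tau$ is linear. Given $v_1,v_2\in L^2(M)$ and scalars $a,b$, the function $w:=aH_\tau v_1+bH_\tau v_2$ solves $e^{\tau x_1}(-\Delta_g)e^{-\tau x_1}w=av_1+bv_2$; its trace equals $a\,\mathtt{tr}(H_\tau v_1)+b\,\mathtt{tr}(H_\tau v_2)$ by linearity of $\mathtt{tr}$, hence is supported in $S^+_\tau$; and since $\pi_\tau$ is a linear projection fixing each $H_\tau v_j$, we get $\pi_\tau w=w$. By the uniqueness clause, $w=H_\tau(av_1+bv_2)$. The identity $e^{\tau x_1}(-\Delta_g)e^{-\tau x_1}H_\tau v=v$, the relation $\pi_\tau H_\tau=H_\tau$, the bound $\|H_\tau f\|_{L^2(M)}\le C_0|\tau|^{-1}\|f\|_{L^2(M)}$, and the support statement for $\mathtt{tr}(H_\tau v)$ are then verbatim restatements of assertions in the proposition.

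It remains to see that $H_\tau$ takes values in $e^{\tau x_1}H_{\Delta_g}(M)$. Here I would put $\tilde u:=e^{-\tau x_1}H_\tau v$. Since $M$ is compact and $x_1$ is smooth, multiplication by $e^{-\tau x_1}$ is a bounded operation on $L^2(M)$, so $\tilde u\in L^2(M)$; and the defining equation becomes $(-\Delta_g)\tilde u=e^{-\tau x_1}v\in L^2(M)$, whence $\tilde u\in H_{\Delta_g}(M)$ and $H_\tau v=e^{\tau x_1}\tilde u\in e^{\tau x_1}H_{\Delta_g}(M)$. In particular $\mathtt{tr}(H_\tau v)$ is defined in $H^{-1/2}(\p M)$ through Proposition~\ref{extensions of the trace operators}, so the support assertion is meaningful.

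No serious obstacle is expected: all of the analytic content lives in Proposition~\ref{solvability result giving H_tau}, and the corollary merely repackages its conclusions as properties of a single linear operator. The one point deserving a line of care is the last one — that the conjugated Laplacian having range in $L^2(M)$ forces $e^{-\tau x_1}H_\tau v$ into the maximal domain $H_{\Delta_g}(M)$ — but since on the compact manifold $M$ the conjugating factors $e^{\pm\tau x_1}$ are bounded with bounded inverses, this is routine.
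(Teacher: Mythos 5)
Your proposal is correct and matches the paper's approach: the paper simply defines $H_\tau v:=u$ with $u$ the unique solution from Proposition~\ref{solvability result giving H_tau} and declares the corollary immediate. Your extra checks (well-definedness and linearity via the uniqueness clause, which by Lemma~\ref{orthogonal projection of pi^perp_tau} indeed holds without needing the norm bound, and the observation that $e^{-\tau x_1}H_\tau v\in H_{\Delta_g}(M)$ follows from the equation since $e^{\pm\tau x_1}$ are bounded on the compact manifold) are exactly the steps the paper leaves implicit.
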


Thus, the operator $H_\tau$ satisfies Theorem~\ref{existence of G_tau operator} except \eqref{main difficulty in the proof of existence of G_tau}. We shall accodingly modify $H_\tau$ to obtain \eqref{main difficulty in the proof of existence of G_tau}. We need the technical result.
\begin{Lemma}\label{technical lemma}
Let $T_\tau:=H_\tau \pi_{-\tau}$. Then $T_\tau^*=T_{-\tau}$.
\end{Lemma}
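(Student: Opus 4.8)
The plan is to compute both $T_\tau^*$ and $T_{-\tau}$ as operators from $L^2(M)$ to $L^2(M)$ and show they agree by testing against arbitrary elements. Recall $T_\tau = H_\tau \pi_{-\tau}$, so $T_\tau^* = \pi_{-\tau}^* H_\tau^* = \pi_{-\tau} H_\tau^*$, since orthogonal projections are self-adjoint. Thus it suffices to understand the adjoint of $H_\tau$. Since $\pi_{-\tau}H_{-\tau} = H_{-\tau}$ by Corollary~\ref{corollary of solvability result giving H_tau}, we have $T_{-\tau} = H_{-\tau}\pi_\tau$, and the claim $T_\tau^* = T_{-\tau}$ reduces to checking $\pi_{-\tau}H_\tau^* = H_{-\tau}\pi_\tau$, i.e. that $(H_\tau^* v \,|\, w)_{L^2(M)} = (v \,|\, H_{-\tau} w)_{L^2(M)}$ for all $v, w \in L^2(M)$ after suitable projections — equivalently, that $H_{-\tau}$ is, up to the projections $\pi_\tau, \pi_{-\tau}$, a genuine adjoint of $H_\tau$.

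First I would unwind the definitions via the generalized Green's identity of Corollary~\ref{generalized Green's identity}. Set $u = H_\tau v$ and $u' = H_{-\tau} w$ for $v, w \in L^2(M)$. Then $e^{\tau x_1}(-\Delta_g)e^{-\tau x_1} u = v$ with $\mathtt{tr}(u)$ supported in $S^+_\tau$, and $e^{-\tau x_1}(-\Delta_g)e^{\tau x_1} u' = w$ with $\mathtt{tr}(u')$ supported in $S^+_{-\tau}$. I would then compute $(v \,|\, u')_{L^2(M)} - (u \,|\, w)_{L^2(M)}$ by moving the conjugated Laplacians across the inner product. Writing $L_\tau := e^{\tau x_1}(-\Delta_g)e^{-\tau x_1}$, one has $(L_\tau u \,|\, u')_{L^2(M)} = (u \,|\, L_{-\tau} u')_{L^2(M)}$ modulo boundary terms, because conjugation by $e^{\pm\tau x_1}$ turns the formal adjoint of $L_\tau$ into $L_{-\tau}$ (the first-order drift term flips sign). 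Applying Corollary~\ref{generalized Green's identity} to $-\Delta_g$ acting on $e^{-\tau x_1}u$ and $e^{\tau x_1}u'$ (both of which lie in $H_{\Delta_g}(M)$, with one of them in $H^2(M)$ after using the last statement of Proposition~\ref{extensions of the trace operators} once $\mathtt{tr}$ is checked to be regular enough — here one should be a little careful, possibly approximating by $C^\infty$ elements of $\mathcal D^\pm_\tau$ as in Proposition~\ref{solvability result giving H_tau}), the boundary contribution is a sum of pairings of $\mathtt{tr}(u)$ against $\mathtt{tr}_\nu(u')$ and of $\mathtt{tr}_\nu(u)$ against $\mathtt{tr}(u')$.

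The key cancellation is that these boundary terms vanish because of the disjoint support conditions $S^+_\tau \cap S^+_{-\tau} = \varnothing$ — indeed $\p_\nu\varphi \ge |3\tau|^{-1} > 0$ on $S^+_\tau$ and $\p_\nu\varphi \le -|3\tau|^{-1} < 0$ on $S^+_{-\tau}$ when these are defined with opposite signs of the weight, so the two sets are disjoint (after possibly enlarging $\tau_0$). Since $\mathtt{tr}(u)$ is supported in $S^+_\tau$ and $\mathtt{tr}_\nu(u')$ is supported in $S^+_\tau$ as well (being zero on $S^+_{-\tau}$ for $u'$ in the relevant domain... here I would instead use that $u' \in \mathcal A_{-\tau}$-type structure forces $\mathtt{tr}_\nu(u')$ to be supported away from $S^+_\tau$), the products $\mathtt{tr}(u)\,\mathtt{tr}_\nu(u')$ and $\mathtt{tr}_\nu(u)\,\mathtt{tr}(u')$ vanish identically on $\p M$. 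This yields $(v \,|\, H_{-\tau} w)_{L^2(M)} = (H_\tau v \,|\, w)_{L^2(M)}$ for all $v, w$, and then inserting the projections — using $\pi_\tau H_\tau = H_\tau$, $\pi_{-\tau} H_{-\tau} = H_{-\tau}$, self-adjointness of $\pi_{\pm\tau}$, and replacing $v, w$ by $\pi_{-\tau}v, \pi_\tau w$ respectively — gives $(T_\tau v \,|\, w)_{L^2(M)} = (v \,|\, T_{-\tau} w)_{L^2(M)}$, i.e. $T_\tau^* = T_{-\tau}$. The main obstacle I anticipate is the careful bookkeeping of which boundary piece is supported where, and justifying the use of the Green's identity at the level of $H_{\Delta_g}$-regularity rather than $C^\infty$; the cleanest route is to run the whole computation first for $u, u'$ arising from smooth $w \in \mathcal D^+_\tau$, $\mathcal D^+_{-\tau}$ (as in the proof of Proposition~\ref{solvability result giving H_tau}) and then pass to the limit using the $L^2 \to L^2$ bounds on $H_{\pm\tau}$ and density.
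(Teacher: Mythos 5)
Your reduction to showing that $H_{-\tau}$ is an adjoint of $H_\tau$ ``up to the projections'' is the right idea, but the step where you actually carry it out fails. You apply Green's identity to $u=H_\tau v$ and $u'=H_{-\tau}w$ for \emph{arbitrary} $v,w\in L^2(M)$ and claim the boundary terms vanish. The boundary contribution consists of pairings of $\mathtt{tr}(u)$ against $\mathtt{tr}_\nu(u')$ and of $\mathtt{tr}_\nu(u)$ against $\mathtt{tr}(u')$, i.e.\ always a \emph{trace} against a \emph{normal derivative}. Proposition~\ref{solvability result giving H_tau} controls only the supports of $\mathtt{tr}(H_{\pm\tau}\cdot)$ (in $S^+_{\pm\tau}$); it gives no support information at all about $\mathtt{tr}_\nu(H_{\pm\tau}\cdot)$, and membership in the spaces $\mathcal A_{\pm\tau}$ or $\mathcal L_{\pm\tau}$ likewise says nothing about normal derivatives. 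So the disjointness of $S^+_\tau$ and $S^+_{-\tau}$, which would kill a product of the two traces, is irrelevant to the terms that actually appear, and the asserted identity $(H_\tau v\,|\,w)=(v\,|\,H_{-\tau}w)$ for all $v,w$, i.e.\ $H_\tau^*=H_{-\tau}$, is not established. In fact it cannot be true: if $H_\tau^*=H_{-\tau}$, then $T_\tau^*=\pi_{-\tau}H_{-\tau}=H_{-\tau}$ while $T_{-\tau}=H_{-\tau}\pi_\tau$, so the lemma would force $H_{-\tau}\pi_\tau^\perp=0$; but $H_{-\tau}$ is injective (since $e^{-\tau x_1}(-\Delta_g)e^{\tau x_1}H_{-\tau}=\id$) and $\pi_\tau^\perp\neq 0$ because $\mathcal A_\tau$ contains nontrivial solutions of $e^{\tau x_1}(-\Delta_g)e^{-\tau x_1}u=0$ with trace supported in $S^+_\tau$. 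This is precisely why the Green's operator is defined as $G_\tau=H_\tau+\pi_\tau^\perp H_{-\tau}^*$ with a genuinely nonzero correction term.

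The paper avoids this by never pairing two outputs of $H_{\pm\tau}$ directly. It first observes that $T_\tau^*\pi_\tau^\perp=T_{-\tau}\pi_\tau^\perp=0$ and that both $T_\tau^*$ and $T_{-\tau}$ have range in $\mathcal L_{-\tau}$, so it suffices to test $T_\tau^*$ and $H_{-\tau}$ on elements $e^{-\tau x_1}(-\Delta_g)e^{\tau x_1}v$ with $v\in\mathcal D^+_\tau$, against elements $e^{\tau x_1}(-\Delta_g)e^{-\tau x_1}w$ with $w\in\mathcal D^+_{-\tau}$. In each application of Green's identity one factor is then a smooth function with vanishing trace and with normal derivative vanishing on $S^+_{\pm\tau}$, and the other factor has its trace supported exactly there; this is what makes every boundary term vanish. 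If you want to salvage your argument, you must restrict to these dense families of test functions (and use $\pi_{\pm\tau}H_{\pm\tau}=H_{\pm\tau}$ to justify the reduction), rather than attempting the adjoint identity for all of $L^2(M)$.
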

\begin{proof}
Note that $T_\tau^*\pi^\perp_\tau=\pi_{-\tau} H_\tau^*(\id-\pi_\tau)=\pi_{-\tau} H_\tau^*-\pi_{-\tau} H_\tau^*\pi_\tau=0$, where in the first step we have used the fact that $\pi_{-\tau}^*=\pi_{-\tau}$ (since $\pi_{-\tau}$ is projection) and in the last step we have used that $H_\tau^*\pi_\tau=H_\tau^*$ (this follows from $\pi_\tau H_\tau=H_\tau$ which is true by Corollary~\ref{corollary of solvability result giving H_tau}). Also note that $T_{-\tau}\pi_\tau^\perp=H_{-\tau}\pi_\tau\pi_\tau^\perp=0$. Thus, $T_\tau^*\pi^\perp_\tau=T_{-\tau}\pi_\tau^\perp=0$, and hence, to prove the lemma it is sufficient to show that
$$
T^*_\tau e^{-\tau x_1}(-\Delta_g)e^{\tau x_1} v=H_{-\tau}e^{-\tau x_1}(-\Delta_g)e^{\tau x_1} v
$$
for all $\mathcal D^+_{\tau}$. Observe that $\pi_{-\tau}T_\tau^*=\pi_{-\tau}^2 H_\tau^*=\pi_{-\tau} H_\tau^*=T^*_{-\tau}$ (since $\pi_{-\tau}^2=\pi_{-\tau}$). Therefore, it is enough to show that
\begin{multline*}
(e^{\tau x_1}(-\Delta_g)e^{-\tau x_1}w|T^*_\tau e^{-\tau x_1}(-\Delta_g)e^{\tau x_1}v)_{L^2(M)}\\
=(e^{\tau x_1}(-\Delta_g)e^{-\tau x_1}w|H_{-\tau} e^{-\tau x_1}(-\Delta_g)e^{\tau x_1}v)_{L^2(M)},
\end{multline*}
for all $w\in \mathcal D^+_{-\tau}$ and for all $v\in \mathcal D^+_{\tau}$. We have
\begin{align*}
(e^{\tau x_1}(-\Delta_g)e^{-\tau x_1}w|&T^*_\tau e^{-\tau x_1}(-\Delta_g)e^{\tau x_1}v)_{L^2(M)}\\
&=(e^{\tau x_1}(-\Delta_g)e^{-\tau x_1}w|\pi_{-\tau}H^*_\tau e^{-\tau x_1}(-\Delta_g)e^{\tau x_1}v)_{L^2(M)}\\
&=(H_\tau \pi_{-\tau}e^{\tau x_1}(-\Delta_g)e^{-\tau x_1}w| e^{-\tau x_1}(-\Delta_g)e^{\tau x_1}v)_{L^2(M)}\\
&=(H_\tau e^{\tau x_1}(-\Delta_g)e^{-\tau x_1}w| e^{-\tau x_1}(-\Delta_g)e^{\tau x_1}v)_{L^2(M)}.
\end{align*}
Since by Corollary~\ref{corollary of solvability result giving H_tau} we know that $\mathtt{tr}(H_\tau e^{\tau x_1}(-\Delta_g)e^{-\tau x_1}w)$ is supported in $S^+_{\tau}$, we can use Green's identity and the fact that $v\in\mathcal D^+_{\tau}$ to get
\begin{align*}
(e^{\tau x_1}(-\Delta_g)e^{-\tau x_1}w|&T^*_\tau e^{-\tau x_1}(-\Delta_g)e^{\tau x_1}v)_{L^2(M)}\\
&=(e^{\tau x_1}(-\Delta_g)e^{-\tau x_1}H_\tau e^{\tau x_1}(-\Delta_g)e^{-\tau x_1}w|v)_{L^2(M)}
\end{align*}
Since $e^{\tau x_1}(-\Delta_g)e^{-\tau x_1}H_\tau=\id$ by Corollary~\ref{corollary of solvability result giving H_tau}, we obtain
\begin{align*}
(e^{\tau x_1}(-\Delta_g)e^{-\tau x_1}w|&T^*_\tau e^{-\tau x_1}(-\Delta_g)e^{\tau x_1}v)_{L^2(M)}\\
&=(e^{\tau x_1}(-\Delta_g)e^{-\tau x_1}w|v)_{L^2(M)}\\
&=(w|e^{-\tau x_1}(-\Delta_g)e^{\tau x_1}v)_{L^2(M)}.
\end{align*}
Here, in the last step we used the Green's identity and that $w|_{\p M}=v|_{\p M}=0$.
\medskip

Using that $\mathtt{tr}(H_{-\tau} e^{-\tau x_1}(-\Delta_g)e^{\tau x_1}v)$ is supported in $S^+_{-\tau}$, $w\in\mathcal D_{-\tau}^+$ and the Green's identity, we obtain
\begin{align*}
(e^{\tau x_1}(-\Delta_g)e^{-\tau x_1}w|H_{-\tau} e^{-\tau x_1}&(-\Delta_g)e^{\tau x_1}v)_{L^2(M)}\\
&=(w|e^{-\tau x_1}(-\Delta_g)e^{\tau x_1}H_{-\tau} e^{-\tau x_1}(-\Delta_g)e^{\tau x_1}v)_{L^2(M)}\\
&=(w|e^{-\tau x_1}(-\Delta_g)e^{\tau x_1}v)_{L^2(M)}.
\end{align*}
In the last step we used the fact that $e^{-\tau x_1}(-\Delta_g)e^{\tau x_1}H_{-\tau}=\id$ (by Corollary~\ref{corollary of solvability result giving H_tau}). The proof of the lemma is thus complete.
\end{proof}

\begin{proof}[Proof of Theorem~\ref{existence of G_tau operator}]
Define $G_\tau=H_\tau+\pi_\tau^\perp H_{-\tau}^*$. By Corollary~\ref{corollary of solvability result giving H_tau} and Lemma~\ref{orthogonal projection of pi^perp_tau}, it follows that
$$
e^{\tau x_1}(-\Delta_g)e^{-\tau x_1} G_\tau v=v,\quad v\in L^2(M),
$$
$$
\|G_\tau f\|_{L^2(M)}\le C_0\frac{1}{|\tau|}\|f\|_{L^2(M)},\quad f\in L^2(M),
$$
$G_\tau:L^2(M)\to e^{\tau x_1}H_{\Delta_g}(M)$ and that for all $v\in L^2(M)$ support of $\mathtt{tr}(G_\tau v)$ is in~$S^+_{\tau}$.
\medskip

It is left to prove \eqref{main difficulty in the proof of existence of G_tau}. For this, we need first to show that $G_\tau^*=G_{-\tau}$. Using Lemma~\ref{technical lemma}, we can show
$$
G_\tau^*=H^*_\tau+H_{-\tau}\pi_\tau^\perp=(H_\tau\pi_{-\tau}^\perp+T_\tau)^*+H_{-\tau}-T_{-\tau}=H_{-\tau}+\pi_{-\tau}^\perp H^*_\tau=G_{-\tau}.
$$
Using this, for $f\in L^2(M)$ and $u\in \mathcal D^+_{-\tau}$, we have
$$
(f|G_{\tau}e^{\tau x_1}(-\Delta_g)e^{-\tau x_1}u)_{L^2(M)}=(G_{-\tau}f|e^{\tau x_1}(-\Delta_g)e^{-\tau x_1}u)_{L^2(M)}.
$$
We have shown that $\mathtt{tr}(G_{-\tau}f)$ is supported in $S^+_{-\tau}$. This fact together with $u\in\mathcal D^+_{-\tau}$ allows us to use the generalized Green's identity from Corollary~\ref{generalized Green's identity} and get
$$
(f|G_{\tau}e^{\tau x_1}(-\Delta_g)e^{-\tau x_1}u)_{L^2(M)}=(e^{-\tau x_1}(-\Delta_g)e^{\tau x_1}G_{-\tau}f|u)_{L^2(M)}=(f|u)_{L^2(M)}.
$$
Here, in the last step we used the already proven fact that $e^{-\tau x_1}(-\Delta_g)e^{\tau x_1} G_{-\tau}=\id$. This finishes the proof.
\end{proof}

\section{Single layer operators}\label{S5}
The aim of this section is to construct the single layer operators $S_\tau$ corresponding to the Green's operators $G_\tau$ constructed in the previous section.
\medskip

Let $\tau_0>0$ be as in Theorem~\ref{existence of G_tau operator}. For $\tau\in\R$ with $|\tau|\ge \tau_0$, consider the operator
$$
(\mathtt{tr}\circ G_\tau)^*:e^{-\tau x_1}(\mathcal H_g(\p M))^*\to L^2(M).
$$
In other words, $(\mathtt{tr}\circ G_\tau)^*$ defined for $h\in e^{-\tau x_1}(\mathcal H_g(\p M))^*$ by
$$
(f|(\mathtt{tr}\circ G_\tau)^*h)_{L^2(M)}=\<(\mathtt{tr}\circ G_\tau)f,h\>_{H^{-1/2,1/2}(\p M)},\quad f\in L^2(M).
$$
\begin{Proposition}\label{properties of tr G_tau map}
For all $h\in e^{-\tau x_1}(\mathcal H_g(\p M))^*$ we have
$$
e^{-\tau x_1}(-\Delta_g)e^{\tau x_1} (\mathtt{tr}\circ G_\tau)^*h=0
$$
and the support of $\mathtt{tr}((\mathtt{tr}\circ G_\tau)^*h)$ is in $S^+_{-\tau}$. Moreover, suppose that $B$ is a neighborhood of $S^+_{\tau}$ such that $\overline B\subset \p M_{\sgn(\tau)}$, and that the support of $h$ is in $\p M\setminus B$. Then $(\mathtt{tr}\circ G_\tau)^*h=0$.
\end{Proposition}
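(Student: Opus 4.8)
The plan is to work from the defining relation $(f\,|\,(\mathtt{tr}\circ G_\tau)^*h)_{L^2(M)}=\langle(\mathtt{tr}\circ G_\tau)f,h\rangle_{H^{-1/2,1/2}(\p M)}$, $f\in L^2(M)$, feeding in test functions $f$ for which $\mathtt{tr}(G_\tau f)$ is explicitly computable by means of the left inverse identity \eqref{main difficulty in the proof of existence of G_tau} on $\mathcal D^+_{-\tau}$; the other structural input is that $\mathtt{tr}(G_\tau f)$ is always supported in $S^+_\tau$ (Theorem~\ref{existence of G_tau operator}). Write $\psi:=(\mathtt{tr}\circ G_\tau)^*h$. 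For the equation: any $\varphi\in C^\infty_0(M^{\rm int})$ lies in $\mathcal D^+_{-\tau}$, so with $f=e^{\tau x_1}(-\Delta_g)e^{-\tau x_1}\varphi$ we get $G_\tau f=\varphi$, hence $\mathtt{tr}(G_\tau f)=0$ and $(f\,|\,\psi)_{L^2(M)}=0$. Since the formal transpose of $e^{\tau x_1}(-\Delta_g)e^{-\tau x_1}$ with respect to $d\Vol_g$ is $e^{-\tau x_1}(-\Delta_g)e^{\tau x_1}$, integrating by parts against $\varphi$ turns this into $e^{-\tau x_1}(-\Delta_g)e^{\tau x_1}\psi=0$ in $M$, that is $(-\Delta_g)(e^{\tau x_1}\psi)=0$. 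In particular $e^{\tau x_1}\psi\in H_{\Delta_g}(M)$, so by Proposition~\ref{extensions of the trace operators} its trace belongs to $\mathcal H_g(\p M)$ and $\mathtt{tr}(\psi):=e^{-\tau x_1}\mathtt{tr}(e^{\tau x_1}\psi)$ is meaningful.

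For the support of $\mathtt{tr}(\psi)$, fix $w\in C^\infty(\p M)$ with $\supp w\cap S^+_{-\tau}=\emptyset$ and, by surjectivity of the smooth trace map, pick $v\in C^\infty(M)$ with $\mathtt{tr}(v)=0$ and $\mathtt{tr}_\nu(v)=w$. Since $\mathtt{tr}(v)=0$, conjugation does not disturb the Neumann datum, $\mathtt{tr}_\nu(e^{\tau x_1}v)=e^{\tau x_1}w$, which vanishes on $S^+_{-\tau}$; together with $\mathtt{tr}(e^{\tau x_1}v)=0$ this gives $e^{\tau x_1}v\in\mathcal D^+_{-\tau}$. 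Hence, writing $f:=e^{\tau x_1}(-\Delta_g)v=e^{\tau x_1}(-\Delta_g)e^{-\tau x_1}(e^{\tau x_1}v)$, identity \eqref{main difficulty in the proof of existence of G_tau} yields $G_\tau f=e^{\tau x_1}v$, so $\mathtt{tr}(G_\tau f)=0$ and $(f\,|\,\psi)_{L^2(M)}=0$. On the other hand $(f\,|\,\psi)_{L^2(M)}=((-\Delta_g)v\,|\,e^{\tau x_1}\psi)_{L^2(M)}$, and the generalized Green's identity of Corollary~\ref{generalized Green's identity}, applied to the $(-\Delta_g)$-harmonic function $e^{\tau x_1}\psi\in H_{\Delta_g}(M)$ and to $v\in H^2(M)$ with $\mathtt{tr}(v)=0$, $\mathtt{tr}_\nu(v)=w$, collapses the right-hand side to $-\langle\mathtt{tr}(e^{\tau x_1}\psi),w\rangle$. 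Thus this pairing vanishes for every admissible $w$, i.e.\ $\supp\mathtt{tr}(\psi)=\supp\mathtt{tr}(e^{\tau x_1}\psi)\subset S^+_{-\tau}$. I expect this to be the main obstacle: one has to recognize that the right test object is such a $v$ — the condition $\mathtt{tr}(v)=0$ being precisely what keeps $e^{\tau x_1}v$ inside $\mathcal D^+_{-\tau}$ — and then chain \eqref{main difficulty in the proof of existence of G_tau} with Corollary~\ref{generalized Green's identity}.

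For the last assertion, assume $B$ is a neighborhood of $S^+_\tau$ with $\overline B\subset\p M_{\sgn(\tau)}$ and $\supp h\subset\p M\setminus B$. For any $f\in L^2(M)$, $\mathtt{tr}(G_\tau f)$ is supported in the compact set $S^+_\tau$, which is disjoint from the closed set $\supp h$, so a standard cutoff argument — pick $\chi\in C^\infty(\p M)$ with $\chi\equiv1$ near $S^+_\tau$ and $\supp\chi\subset B$, so that $\chi\,\mathtt{tr}(G_\tau f)=\mathtt{tr}(G_\tau f)$, $\chi h=0$, and multiplication by $\chi$ is self-adjoint for the $\mathcal H_g(\p M)$--$(\mathcal H_g(\p M))^*$ duality because it is bounded on $\mathcal H_g(\p M)$ — gives $(f\,|\,\psi)_{L^2(M)}=\langle\mathtt{tr}(G_\tau f),h\rangle=0$. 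Since $f$ is arbitrary, $(\mathtt{tr}\circ G_\tau)^*h=0$. The integrations by parts, the complex-conjugate bookkeeping in the $L^2$ inner products and the boundary dualities, and this cutoff argument are all routine.
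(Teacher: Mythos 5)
Your argument is correct and essentially the paper's own: both rest on the duality definition of $(\mathtt{tr}\circ G_\tau)^*$, the identity \eqref{main difficulty in the proof of existence of G_tau} applied to test functions in $\mathcal D^+_{-\tau}$, the generalized Green's identity of Corollary~\ref{generalized Green's identity}, and, for the final claim, the fact that $\mathtt{tr}(G_\tau f)$ is supported in $S^+_{\tau}$. The only differences are cosmetic: you construct test functions with prescribed Cauchy data $(0,w)$ instead of pairing against arbitrary elements of $\mathcal D^+_{-\tau}$, and you insert a boundary cutoff where the paper simply invokes disjointness of supports.
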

\begin{proof}
Let $h\in e^{-\tau x_1}(\mathcal H_g(\p M))^*$. Then for all $f\in\mathcal D^+_{-\tau}$, we obtain
\begin{equation}\label{eq:6.1}
\begin{aligned}
(e^{\tau x_1}(-\Delta_g)e^{-\tau x_1}f|(\mathtt{tr}\circ G_\tau&)^*h)_{L^2(M)}\\
&=\<(\mathtt{tr}\circ G_\tau)e^{\tau x_1}(-\Delta_g)e^{-\tau x_1}f,h\>_{H^{-1/2,1/2}(\p M)}\\
&=\<\mathtt{tr}(f),h\>_{H^{-1/2,1/2}(\p M)}=0.
\end{aligned}
\end{equation}
Here, we have used \eqref{main difficulty in the proof of existence of G_tau} and $\mathtt{tr}(f)=0$. If we take $f\in C^\infty_0(M)$ in \eqref{eq:6.1} and use the generalized Green's identity in Corollary~\ref{generalized Green's identity}, we can show that
$$
(f|e^{-\tau x_1}(-\Delta_g)e^{\tau x_1} (\mathtt{tr}\circ G_\tau)^*h)_{L^2(M)}=(e^{\tau x_1}(-\Delta_g)e^{-\tau x_1}f|(\mathtt{tr}\circ G_\tau)^*h)_{L^2(M)}=0.
$$
Hence, we obtain $e^{-\tau x_1}(-\Delta_g)e^{\tau x_1} (\mathtt{tr}\circ G_\tau)^*h=0$ for all $h\in e^{-\tau x_1}(\mathcal H_g(\p M))^*$.
\medskip

Let us now show that the support of $\mathtt{tr}((\mathtt{tr}\circ G_\tau)^*h)$ is in $S^+_{-\tau}$. For arbitrary $f\in \mathcal D^+_{-\tau}$, using the generalized Green's identity from Corollary~\ref{generalized Green's identity}, we get
\begin{align*}
\<\mathtt{tr}((\mathtt{tr}\circ G_\tau)^*h),\mathtt{tr}_\nu(f)&\>_{H^{-1/2,1/2}(\p M)}\\
&=\<\mathtt{tr}((\mathtt{tr}\circ G_\tau)^*h),\mathtt{tr}_\nu(f)\>_{H^{-1/2,1/2}(\p M)}\\
&\quad-\<\mathtt{tr}_\nu((\mathtt{tr}\circ G_\tau)^*h),\mathtt{tr}(f)\>_{H^{-3/2,3/2}(\p M)}\\
&=(e^{-\tau x_1}(-\Delta_g) e^{\tau x_1}(\mathtt{tr}\circ G_\tau)^*h|f)_{L^2(M)}\\
&\quad-((\mathtt{tr}\circ G_\tau)^*h|e^{\tau x_1}(-\Delta_g) e^{-\tau x_1}f)_{L^2(M)}\\
&=-((\mathtt{tr}\circ G_\tau)^*h|e^{\tau x_1}(-\Delta_g) e^{-\tau x_1}f)_{L^2(M)}=0,
\end{align*}
where in the last step we have used \eqref{eq:6.1}.
\medskip

Now, we prove the last statement of the proposition. If $h$ is supported in $\p M\setminus B$, then for all $f\in L^2(M)$ we have
$$
(f|(\mathtt{tr}\circ G_\tau)^*h)_{L^2(M)}=\<(\mathtt{tr}\circ G_\tau)f,h\>_{H^{-1/2,1/2}(S^+_{\tau})}=0.
$$
This is because by the last statement of Theorem~\ref{existence of G_tau operator}, $(\mathtt{tr}\circ G_\tau)f$ is supported in $S^+_{\tau}$. The proof of the proposition is thus complete.
\end{proof}

For $\tau\in\R$ with $|\tau|\ge\tau_0$, define the operator $S_\tau$ for $h\in (\mathcal H_g(\p M))^*$ by
$$
S_\tau h=e^{-\tau x_1}(\mathtt{tr}\circ (\mathtt{tr}\circ G_\tau)^*)^*(e^{\tau x_1}h).
$$
\begin{Proposition}\label{properties of S_tau operator}
For $\tau\in\R$ with $|\tau|\ge\tau_0$, the operator $S_\tau$ is bounded $(\mathcal H_g(\p M))^*\to \mathcal H_g(\p M)$, and for $h\in(\mathcal H(\p M))^*$, $S_\tau h$ depends only on $h|_{\p M_{-\sgn(\tau)}}$ and supported in~$B$.
\end{Proposition}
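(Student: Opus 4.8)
The plan is to deduce every assertion from the mapping and support properties already established for $G_\tau$ in Theorem~\ref{existence of G_tau operator} and for $(\mathtt{tr}\circ G_\tau)^*$ in Proposition~\ref{properties of tr G_tau map}, tracking the weights $e^{\pm\tau x_1}$ through the two adjoints in the definition of $S_\tau$. First I would record the chain of bounded maps. By Theorem~\ref{existence of G_tau operator} together with the boundedness of $\mathtt{tr}:H_{\Delta_g}(M)\to\mathcal H_g(\p M)$ from Proposition~\ref{solvability of the Dirichlet problem}, the map $\mathtt{tr}\circ G_\tau$ is bounded $L^2(M)\to e^{\tau x_1}\mathcal H_g(\p M)$, so $(\mathtt{tr}\circ G_\tau)^*$ is bounded $e^{-\tau x_1}(\mathcal H_g(\p M))^*\to L^2(M)$. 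By Proposition~\ref{properties of tr G_tau map} the range of $(\mathtt{tr}\circ G_\tau)^*$ lies in $\{u\in L^2(M):e^{-\tau x_1}(-\Delta_g)e^{\tau x_1}u=0\}$, so $e^{\tau x_1}u\in H_{\Delta_g}(M)$ there and $\mathtt{tr}\circ(\mathtt{tr}\circ G_\tau)^*$ is bounded $e^{-\tau x_1}(\mathcal H_g(\p M))^*\to e^{-\tau x_1}\mathcal H_g(\p M)$. Dualizing once more --- using that $\mathcal H_g(\p M)$ is a Hilbert space, isometric via $P_0$ to the closed subspace $b_0(M)\subset L^2(M)$, and hence reflexive --- the operator $(\mathtt{tr}\circ(\mathtt{tr}\circ G_\tau)^*)^*$ is bounded $e^{\tau x_1}(\mathcal H_g(\p M))^*\to e^{\tau x_1}\mathcal H_g(\p M)$. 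Composing with multiplication by $e^{\tau x_1}$ on the right and $e^{-\tau x_1}$ on the left yields boundedness of $S_\tau:(\mathcal H_g(\p M))^*\to\mathcal H_g(\p M)$.

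Both support statements I would extract from a single computation of the pairing of $S_\tau h$ against a test function. Fix $h\in(\mathcal H_g(\p M))^*$ and $\gamma\in C^\infty(\p M)$; unwinding the two adjoints in the definition of $S_\tau$ and keeping track of the smooth nonvanishing weights $e^{\pm\tau x_1}$ (moving them across the bilinear dualities and into the traces), one obtains
$$
\<S_\tau h,\gamma\>=\<\mathtt{tr}(w),h\>,\qquad w:=e^{\tau x_1}\big((\mathtt{tr}\circ G_\tau)^*(e^{-\tau x_1}\gamma)\big)\in H_{\Delta_g}(M),
$$
where by Proposition~\ref{properties of tr G_tau map} the trace $\mathtt{tr}(w)$ is supported in $S^+_{-\tau}$ and both pairings are the natural duality between $\mathcal H_g(\p M)$ and $(\mathcal H_g(\p M))^*$. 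From this the claim that $S_\tau h$ depends only on $h|_{\p M_{-\sgn(\tau)}}$ follows: since $|3\tau|^{-1}>0$ we have $S^+_{-\tau}\subset\{x\in\p M:-\sgn(\tau)\,\p_\nu\varphi(x)>0\}=\p M_{-\sgn(\tau)}$, so if $h$ vanishes on $\p M_{-\sgn(\tau)}$ then $\supp h$ and $\supp\mathtt{tr}(w)$ are disjoint and the right-hand side is $0$ for every $\gamma$, whence $S_\tau h=0$; linearity then gives the general statement. For the support of $S_\tau h$: pick a neighborhood $B$ of $S^+_\tau$ with $\overline B\subset\p M_{\sgn(\tau)}$ (such $B$ exists since $S^+_\tau$ is closed and contained in the open set $\p M_{\sgn(\tau)}$), and choose an open $B_0$ with $S^+_\tau\subset B_0$ and $\overline{B_0}\subset B$. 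If $\supp\gamma\subset\p M\setminus\overline{B_0}$, then $e^{-\tau x_1}\gamma$ is supported in $\p M\setminus B_0$, so the last statement of Proposition~\ref{properties of tr G_tau map}, applied with $B_0$, gives $(\mathtt{tr}\circ G_\tau)^*(e^{-\tau x_1}\gamma)=0$, hence $\<S_\tau h,\gamma\>=0$; as $\gamma\in C^\infty(\p M)$ supported in the open set $\p M\setminus\overline{B_0}$ was arbitrary, $\supp(S_\tau h)\subset\overline{B_0}\subset B$.

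I expect the merely technical part to be the weight bookkeeping in the first paragraph (pinning down the duals of the weighted spaces $e^{\pm\tau x_1}\mathcal H_g(\p M)$ and checking each composition maps between the stated spaces), and the one genuinely delicate point to be the locality, used for the dependence statement, of the duality pairing between $\mathcal H_g(\p M)$ and $(\mathcal H_g(\p M))^*$ --- that two elements with disjoint supports pair to $0$. This is legitimate because $C^\infty(\p M)$ sits continuously and densely in $\mathcal H_g(\p M)$ (since $\mathtt{tr}:H_{\Delta_g}(M)\to\mathcal H_g(\p M)$ is a continuous surjection, hence open, and $C^\infty(M)$ is dense in $H_{\Delta_g}(M)$), so that $(\mathcal H_g(\p M))^*$ embeds in $\mathcal D'(\p M)$ and the pairing extends the distributional one; converting this into the required vanishing --- by localizing $\mathtt{tr}(w)$, which is supported in the closed set $S^+_{-\tau}$, away from $\supp h$ --- is the only step needing real care.
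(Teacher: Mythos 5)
Your proposal is correct and follows essentially the same route as the paper: boundedness comes from composing the mapping property $(\mathtt{tr}\circ G_\tau)^*:e^{-\tau x_1}(\mathcal H_g(\p M))^*\to e^{-\tau x_1}H_{\Delta_g}(M)$ with $\mathtt{tr}$ and dualizing, while the dependence on $h|_{\p M_{-\sgn(\tau)}}$ and the support in $B$ are read off by duality from the two support statements of Proposition~\ref{properties of tr G_tau map}. You merely make the paper's ``by duality'' step explicit by testing against smooth functions and by flagging the locality of the $\mathcal H_g(\p M)$--$(\mathcal H_g(\p M))^*$ pairing, which is a faithful (and slightly more careful) rendering of the same argument.
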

\begin{proof}
By Proposition~\ref{properties of tr G_tau map}, we have that the operator $(\mathtt{tr}\circ G_\tau)^*:e^{-\tau x_1}(\mathcal H_g(\p M))^*\to e^{-\tau x_1}H_{\Delta_g}(M)$ is bounded, and hence the operator $\mathtt{tr}\circ (\mathtt{tr}\circ G_\tau)^*:e^{-\tau x_1}(\mathcal H_g(\p M))^*\to e^{-\tau x_1}\mathcal H_g(\p M)$ is bounded as well. This implies the boundedness of $(\mathtt{tr}\circ (\mathtt{tr}\circ G_\tau)^*)^*:e^{\tau x_1}(\mathcal H_g(\p M))^*\to e^{\tau x_1}\mathcal H_g(\p M)$. Therefore, $S_\tau$ is a bounded operator $(\mathcal H_g(\p M))^*\to \mathcal H_g(\p M)$.
\medskip

We have by Proposition~\ref{properties of tr G_tau map} that $\mathtt{tr}\circ (\mathtt{tr}\circ G_\tau)^*(e^{-\tau x_1}\tilde h)$ is supported in $S^+_{-\tau}$ if $\tilde h\in (\mathcal H_g(\p M))^*$. By duality, for $h\in (\mathcal H_g(\p M))^*$, $S_\tau h=e^{-\tau x_1}(\mathtt{tr}\circ (\mathtt{tr}\circ G_\tau)^*)^*(e^{\tau x_1}h)$ depends only on $h|_{\p M_{-\sgn(\tau)}}$.
\medskip

By the last statement of Proposition~\ref{properties of tr G_tau map}, if $\tilde h\in (\mathcal H_g(\p M))^*$ is supported in $\p M\setminus B$ then $\mathtt{tr}((\mathtt{tr}\circ G_\tau)^*(e^{-\tau x_1}\tilde h))=0$. By duality, for any $h\in (\mathcal H_g(\p M))^*$, $e^{-\tau x_1}(\mathtt{tr}\circ (\mathtt{tr}\circ G_\tau)^*)^*(e^{\tau x_1}h)$ supported in $B$.
\end{proof}

\section{Boundary integral equation}\label{S6}
In the present section, we prove the solvability of the following boundary integral equation: for $\tau\in\R$ with $|\tau|\ge\tau_0$
\begin{equation}\label{boundary integral equation}
\(\id+S_\tau(\Lambda_{g,q}-\Lambda_{g,0})\)h=f,\quad f,h\in \mathcal H_g(\p M).
\end{equation}
To prove the solvability of \eqref{boundary integral equation}, we need the following result on basic properties of the operator $S_\tau(\Lambda_{g,q}-\Lambda_{g,0})$.
\begin{Proposition}\label{properties of S_tau(Lambda_q-Lambda_0)}
Suppose that $q\in L^\infty(M)$ and $0$ is not a Dirichlet eigenvalue of $-\Delta_g+q$ in $M$. There is $\tau_0>0$ such that for all $\tau\in\R$ with $|\tau|\ge\tau_0$, the operator $S_\tau(\Lambda_{g,q}-\Lambda_{g,0})$ is a bounded operator $\mathcal H_g(\p M)\to \mathcal H_g(\p M)$, and for $f\in\mathcal H_g(\p M)$, $S_\tau(\Lambda_{g,q}-\Lambda_{g,0})f$ is supported in $B$ and can be computed from the knowledge of $\Lambda_q f|_{\p M_{-\sgn(\tau)}}$. Moreover, the following factorization identity holds
$$
S_\tau (\Lambda_{g,q}-\Lambda_{g,0})=\mathtt{tr}\circ e^{-\tau x_1}G_\tau e^{\tau x_1}q P_q.
$$
\end{Proposition}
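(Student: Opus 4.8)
The plan is to read the boundedness, support and computability statements off mapping properties already established, and to obtain the factorization by unwinding the definition of $S_\tau$ and feeding the outcome into the integral identity \eqref{main integral identity}. For the first three assertions: by Proposition~\ref{extending difference of DN maps} the operator $\Lambda_{g,q}-\Lambda_{g,0}$ is bounded $\mathcal H_g(\p M)\to(\mathcal H_g(\p M))^*$, and by Proposition~\ref{properties of S_tau operator} the operator $S_\tau$ is bounded $(\mathcal H_g(\p M))^*\to\mathcal H_g(\p M)$ with $S_\tau k$ supported in $B$ and depending only on $k|_{\p M_{-\sgn(\tau)}}$. Composing gives the boundedness of $S_\tau(\Lambda_{g,q}-\Lambda_{g,0})$ on $\mathcal H_g(\p M)$ and that its range lies in $B$; moreover, for $f\in\mathcal H_g(\p M)$ the element $S_\tau(\Lambda_{g,q}-\Lambda_{g,0})f$ depends only on $\big((\Lambda_{g,q}-\Lambda_{g,0})f\big)|_{\p M_{-\sgn(\tau)}}$, and since $\Lambda_{g,0}$ is known a priori this is recoverable from $\Lambda_{g,q}f|_{\p M_{-\sgn(\tau)}}$.

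For the factorization, fix $f\in\mathcal H_g(\p M)$ and put $u:=P_qf$, so $u\in b_q(M)$ and $qu=qP_qf\in L^2(M)$. Both $S_\tau(\Lambda_{g,q}-\Lambda_{g,0})f$ and $\mathtt{tr}\big(e^{-\tau x_1}G_\tau e^{\tau x_1}qu\big)$ lie in $\mathcal H_g(\p M)$, so it is enough to show they pair identically against every $h\in(\mathcal H_g(\p M))^*$. Starting from $S_\tau h'=e^{-\tau x_1}(\mathtt{tr}\circ(\mathtt{tr}\circ G_\tau)^*)^*(e^{\tau x_1}h')$, unwinding the two adjoints and shuttling the weights $e^{\pm\tau x_1}$ across the pairings (with the defining relation $(\phi\,|\,(\mathtt{tr}\circ G_\tau)^*k)_{L^2(M)}=\langle\mathtt{tr}(G_\tau\phi),k\rangle$ in hand), one arrives at
\[
\big\langle h,\ S_\tau(\Lambda_{g,q}-\Lambda_{g,0})f\big\rangle=\big\langle \mathtt{tr}(\zeta_h),\ (\Lambda_{g,q}-\Lambda_{g,0})f\big\rangle,\qquad \zeta_h:=e^{\tau x_1}(\mathtt{tr}\circ G_\tau)^*(e^{-\tau x_1}h).
\]
By Proposition~\ref{properties of tr G_tau map}, $e^{-\tau x_1}(-\Delta_g)e^{\tau x_1}(\mathtt{tr}\circ G_\tau)^*(e^{-\tau x_1}h)=0$, so $\zeta_h\in H_{\Delta_g}(M)$ is $\Delta_g$-harmonic; hence $\zeta_h\in b_0(M)$ and $P_0(\mathtt{tr}(\zeta_h))=\zeta_h$. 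Applying \eqref{main integral identity} with $\mathtt{tr}(\zeta_h)\in\mathcal H_g(\p M)$ in place of $h$ gives
\[
\big\langle \mathtt{tr}(\zeta_h),\ (\Lambda_{g,q}-\Lambda_{g,0})f\big\rangle=\big(P_0(\mathtt{tr}(\zeta_h))\ \big|\ qP_qf\big)_{L^2(M)}=\big(\zeta_h\ \big|\ qu\big)_{L^2(M)} .
\]
Finally, writing $\zeta_h=e^{\tau x_1}(\mathtt{tr}\circ G_\tau)^*(e^{-\tau x_1}h)$, using that multiplication by $e^{\tau x_1}$ is self-adjoint on $L^2(M)$, the defining relation of $(\mathtt{tr}\circ G_\tau)^*$ once more, and $G_\tau\colon L^2(M)\to e^{\tau x_1}H_{\Delta_g}(M)$ from Theorem~\ref{existence of G_tau operator} to carry the weight back across the pairing, one gets
\[
\big(\zeta_h\ \big|\ qu\big)_{L^2(M)}=\big\langle h,\ \mathtt{tr}\big(e^{-\tau x_1}G_\tau e^{\tau x_1}qP_qf\big)\big\rangle .
\]
The three displays together give $\big\langle h,\ S_\tau(\Lambda_{g,q}-\Lambda_{g,0})f\big\rangle=\big\langle h,\ \mathtt{tr}(e^{-\tau x_1}G_\tau e^{\tau x_1}qP_qf)\big\rangle$ for all $h$, i.e. $S_\tau(\Lambda_{g,q}-\Lambda_{g,0})=\mathtt{tr}\circ e^{-\tau x_1}G_\tau e^{\tau x_1}qP_q$.

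The conceptual input is light — essentially only \eqref{main integral identity}, the harmonicity statement in Proposition~\ref{properties of tr G_tau map}, and the mapping properties of $G_\tau$. The step most likely to hide errors is the bookkeeping: tracking which weighted space (and, with the sesquilinear duality pairings used in the paper, which complex conjugate) each object carries as one passes through the chain from $\mathtt{tr}\circ G_\tau$ to $(\mathtt{tr}\circ G_\tau)^*$ to $\mathtt{tr}\circ(\mathtt{tr}\circ G_\tau)^*$ to its Banach adjoint, and checking that the pairings produced are exactly those occurring in \eqref{main integral identity} and Proposition~\ref{properties of tr G_tau map}. If one prefers to bypass \eqref{main integral identity}, the same value $(\zeta_h\,|\,qu)_{L^2(M)}$ is produced by writing $(\Lambda_{g,q}-\Lambda_{g,0})f=\mathtt{tr}_\nu(P_qf-P_0f)$, noting that $P_qf-P_0f\in H^2(M)$ has vanishing Dirichlet trace and $(-\Delta_g)(P_qf-P_0f)=-qP_qf$, and applying the generalized Green's identity of Corollary~\ref{generalized Green's identity} to the pair $\zeta_h\in H_{\Delta_g}(M)$, $P_qf-P_0f\in H^2(M)$.
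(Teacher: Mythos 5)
Your argument is correct and follows essentially the same route as the paper: the first three assertions by composing Proposition~\ref{extending difference of DN maps} with Proposition~\ref{properties of S_tau operator}, and the factorization by pairing against $h\in(\mathcal H_g(\p M))^*$, unwinding the adjoints in the definition of $S_\tau$, noting via Proposition~\ref{properties of tr G_tau map} that $e^{\tau x_1}(\mathtt{tr}\circ G_\tau)^*(e^{-\tau x_1}h)\in b_0(M)$, and invoking \eqref{main integral identity} — the paper merely runs the same chain of equalities in the opposite direction, starting from $\<h,\mathtt{tr}\circ e^{-\tau x_1}G_\tau e^{\tau x_1}qP_q(f)\>$.
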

\begin{proof}
First part of the proposition is a consequence of Proposition~\ref{extending difference of DN maps} and Proposition~\ref{properties of S_tau operator}. To prove the last statement, consider $h\in (\mathcal H_g(\p M))^*$ and $f\in \mathcal H_g(\p M)$. Then
$$
\<h,\mathtt{tr}\circ e^{-\tau x_1}G_\tau e^{\tau x_1}q P_q(f)\>_{H^{-1/2,1/2}(\p M)}=(e^{\tau x_1}(\mathtt{tr}\circ G_\tau)^*(e^{-\tau x_1}h)|qP_q(f))_{L^2(M)}.
$$
By Proposition~\ref{properties of tr G_tau map}, $e^{\tau x_1}(\mathtt{tr}\circ G_\tau)^*(e^{-\tau x_1}h)$ is in $b_0$. Using \eqref{main integral identity}, we show that
\begin{align*}
(e^{\tau x_1}(\mathtt{tr}\circ G_\tau)^*(&e^{-\tau x_1}h)|qP_q(f))_{L^2(M)}\\
&=\<e^{\tau x_1}\mathtt{tr}\((\mathtt{tr}\circ G_\tau)^*(e^{-\tau x_1}h)\),(\Lambda_{g,q}-\Lambda_{g,0})f\>_{H^{-1/2,1/2}(\p M)}\\
&=\<h,e^{-\tau x_1}\(\mathtt{tr}\circ (\mathtt{tr}\circ G_\tau)^*)^*(e^{\tau x_1}(\Lambda_{g,q}-\Lambda_{g,0})f\)\>_{H^{-1/2,1/2}(\p M)}\\
&=\<h,S_\tau (\Lambda_{g,q}-\Lambda_{g,0})f\>_{H^{-1/2,1/2}(\p M)}.
\end{align*}
The proof is thus finished.
\end{proof}

The following result shows that the boundary integral equation is equivalent to the certain integral equation; compare with \cite[Proposition~3.2]{KSaU}.
\begin{Proposition}\label{solvability results are equivalent}
Suppose that $q\in L^\infty(M)$ and $0$ is not a Dirichlet eigenvalue of $-\Delta_g+q$ in $M$. There is $\tau_0>0$ such that for all $\tau\in\R$ with $|\tau|\ge\tau_0$ and for all $f,h\in\mathcal H_g(\p M)$, $\(\id+S_\tau(\Lambda_{g,q}-\Lambda_{g,0})\)h=f$ holds if and only if $\(\id+e^{-\tau x_1}G_\tau e^{\tau x_1}q\)P_q(h)=P_0(f)$.
\end{Proposition}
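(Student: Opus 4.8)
The plan is to prove the equivalence of the two solvability statements by exploiting the factorization identity
$$
S_\tau(\Lambda_{g,q}-\Lambda_{g,0})=\mathtt{tr}\circ e^{-\tau x_1}G_\tau e^{\tau x_1}q P_q
$$
from Proposition~\ref{properties of S_tau(Lambda_q-Lambda_0)}, together with the mapping properties of $P_q$ and $P_0$. The central observation is that $P_0$ is a linear isomorphism $\mathcal H_g(\p M)\to b_0(M)$ with inverse $\mathtt{tr}|_{b_0}$, so it suffices to translate the boundary equation in $\mathcal H_g(\p M)$ into an equivalent equation in $b_0(M)$ and check that the latter coincides with the claimed interior equation after applying $P_0$.

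First I would start from $\bigl(\id+S_\tau(\Lambda_{g,q}-\Lambda_{g,0})\bigr)h=f$ and substitute the factorization, obtaining $h+\mathtt{tr}\circ e^{-\tau x_1}G_\tau e^{\tau x_1}qP_q(h)=f$ in $\mathcal H_g(\p M)$. Next I would apply $P_0$ to both sides; since $P_0$ is linear and bounded, and since $P_0\circ\mathtt{tr}$ restricted to $b_0(M)$ is the identity (one needs here that $e^{-\tau x_1}G_\tau e^{\tau x_1}q P_q(h)$ lands in $b_0(M)$, which follows from Theorem~\ref{existence of G_tau operator}: $e^{\tau x_1}(-\Delta_g)e^{-\tau x_1}G_\tau v=v$ is wrong direction, so rather one uses that $e^{-\tau x_1}(-\Delta_g)e^{\tau x_1}\bigl(e^{-\tau x_1}G_\tau e^{\tau x_1}w\bigr)=e^{-\tau x_1}\cdot$... — more precisely, $(-\Delta_g)\bigl(e^{-\tau x_1}G_\tau(e^{\tau x_1}w)\bigr)=e^{-\tau x_1}\,e^{\tau x_1}(-\Delta_g)e^{-\tau x_1}G_\tau(e^{\tau x_1}w)=e^{-\tau x_1}e^{\tau x_1}w$ is not quite it either; the clean statement is that $e^{-\tau x_1}G_\tau e^{\tau x_1}$ maps into $H_{\Delta_g}(M)$ and that $P_0\mathtt{tr}$ acts as identity only on $b_0$, so one must instead observe that $P_0(h)$ and $e^{-\tau x_1}G_\tau e^{\tau x_1}qP_q(h)$ are the relevant $b_0$-representatives). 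To be safe I would argue via traces in the other direction: apply $\mathtt{tr}$ to the proposed interior equation $\bigl(\id+e^{-\tau x_1}G_\tau e^{\tau x_1}q\bigr)P_q(h)=P_0(f)$ and use $\mathtt{tr}\, P_q(h)=h$, $\mathtt{tr}\, P_0(f)=f$ to recover the boundary equation, giving one implication immediately and cleanly.

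For the reverse implication I would run the argument backwards: given the boundary equation, I would show both sides of the interior equation have the same image under $\mathtt{tr}$ \emph{and} both solve $(-\Delta_g+q)$ (or rather $(-\Delta_g)$) with the same right-hand side, so that uniqueness of the relevant Dirichlet-type problem (via the last statement of Proposition~\ref{extensions of the trace operators} and the hypothesis that $0$ is not a Dirichlet eigenvalue) forces them to be equal. Concretely: set $u:=\bigl(\id+e^{-\tau x_1}G_\tau e^{\tau x_1}q\bigr)P_q(h)$ and $u_0:=P_0(f)$; one checks $u_0\in b_0(M)$ by definition and that $u\in b_0(M)$ because $e^{-\tau x_1}(-\Delta_g)e^{\tau x_1}\bigl(e^{-\tau x_1}G_\tau e^{\tau x_1}w\bigr)$... — here one uses that $G_\tau$ solves $e^{\tau x_1}(-\Delta_g)e^{-\tau x_1}G_\tau=\id$, i.e. $(-\Delta_g)\bigl(e^{-\tau x_1}G_\tau(e^{\tau x_1}qP_q h)\bigr)=e^{-\tau x_1}e^{\tau x_1}qP_q h=qP_q h=-(-\Delta_g)P_q(h)$ since $(-\Delta_g+q)P_q(h)=0$, hence $(-\Delta_g)u=(-\Delta_g)P_q(h)+(-\Delta_g)\bigl(e^{-\tau x_1}G_\tau e^{\tau x_1}qP_q h\bigr)=-qP_q(h)+qP_q(h)=0$, so $u\in b_0(M)$. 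Since $\mathtt{tr}(u)=\mathtt{tr}(u_0)$ by the boundary equation and $b_0\to\mathcal H_g(\p M)$ is injective (Proposition~\ref{pre solv} with $q=0$), we get $u=u_0$.

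The main obstacle I anticipate is not conceptual but bookkeeping: keeping straight the two conjugated Laplacians $e^{\tau x_1}(-\Delta_g)e^{-\tau x_1}$ and $e^{-\tau x_1}(-\Delta_g)e^{\tau x_1}$ and verifying that $e^{-\tau x_1}G_\tau e^{\tau x_1}$ is a genuine right inverse of $(-\Delta_g)$ on $L^2(M)$ — this is exactly the content of $e^{\tau x_1}(-\Delta_g)e^{-\tau x_1}G_\tau=\id$ conjugated back, so $(-\Delta_g)\bigl(e^{-\tau x_1}G_\tau(e^{\tau x_1}v)\bigr)=e^{-\tau x_1}\bigl(e^{\tau x_1}(-\Delta_g)e^{-\tau x_1}\bigr)G_\tau(e^{\tau x_1}v)=e^{-\tau x_1}e^{\tau x_1}v=v$, which is the identity that makes the cancellation $-qP_q(h)+qP_q(h)=0$ work. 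Once that is in hand, the equivalence is a short argument as above, and one should also remark that all operators involved ($P_q$, $P_0$, $G_\tau$, $\mathtt{tr}$) are bounded, so the equivalence respects the topologies and in particular transports constructibility from one formulation to the other.
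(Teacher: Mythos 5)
Your proposal is correct and follows essentially the same route as the paper: the direction from the interior equation to the boundary equation by applying $\mathtt{tr}$ together with the factorization identity of Proposition~\ref{properties of S_tau(Lambda_q-Lambda_0)}, and the converse by checking that $\(\id+e^{-\tau x_1}G_\tau e^{\tau x_1}q\)P_q(h)$ is harmonic (via $e^{\tau x_1}(-\Delta_g)e^{-\tau x_1}G_\tau=\id$) and has trace $f$, then invoking uniqueness of the harmonic extension (injectivity of $\mathtt{tr}$ on $b_0$). The abandoned detour about applying $P_0$ directly is harmless, since the argument you settle on is exactly the paper's.
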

\begin{proof}
Suppose that $f,h\in\mathcal H_g(\p M)$ satisfies $\(\id+S_\tau(\Lambda_{g,q}-\Lambda_{g,0})\)h=f$. Note that by Theorem~\ref{existence of G_tau operator}, we can show that
$$
\Delta_g\(\id+e^{-\tau x_1}G_\tau e^{\tau x_1}q\)P_q(h)=qP_q(h)-qP_q(h)=0.
$$
Therefore, it is enough to prove that
$$
\mathtt{tr}\(\(\id+e^{-\tau x_1}G_\tau e^{\tau x_1}q\)P_q(h)\)=f,
$$
or equivalently
$$
h+\mathtt{tr}\(e^{-\tau x_1}G_\tau e^{\tau x_1}qP_q(h)\)=f.
$$
Using the factorization identity in Proposition~\ref{properties of S_tau(Lambda_q-Lambda_0)}, we can see that the left hand-side is $\(\id+S_\tau(\Lambda_{g,q}-\Lambda_{g,0})\)h$, which is equal to $f$ by assumption.
\medskip

The converse direction can be shown by applying $\mathtt{tr}$ to the both sides of the identity
$$
\(\id+e^{-\tau x_1}G_\tau e^{\tau x_1}q\)P_q(h)=P_0(f)
$$
and using the factorization identity in Proposition~\ref{properties of S_tau(Lambda_q-Lambda_0)}.
\end{proof}

\begin{Corollary}\label{corollary of solvability results are equivalent}
Suppose that $q\in L^\infty(M)$ and $0$ is not a Dirichlet eigenvalue of $-\Delta_g+q$ in $M$. There is $\tau_0>0$ such that for all $\tau\in\R$ with $|\tau|\ge\tau_0$, the operator $\id+S_\tau(\Lambda_{g,q}-\Lambda_{g,0}):\mathcal H_g(\p M)\to \mathcal H_g(\p M)$ is an isomorphism if and only if so is the operator $\id+e^{-\tau x_1}G_\tau e^{\tau x_1}q:b_q(M)\to b_0(M)$.
\end{Corollary}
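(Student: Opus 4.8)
The plan is to realize the operator $\id+S_\tau(\Lambda_{g,q}-\Lambda_{g,0})$ on $\mathcal H_g(\p M)$ as a conjugate of $\id+e^{-\tau x_1}G_\tau e^{\tau x_1}q$ by the (topologically invertible) solution operators $P_q$ and $P_0$, so that invertibility of one operator is transported to the other.

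First I would collect the mapping properties of the two operators. Write $A:=\id+S_\tau(\Lambda_{g,q}-\Lambda_{g,0})$; by Proposition~\ref{properties of S_tau(Lambda_q-Lambda_0)} this is a bounded operator $\mathcal H_g(\p M)\to\mathcal H_g(\p M)$. Write $B:=\id+e^{-\tau x_1}G_\tau e^{\tau x_1}q$. Since $G_\tau$ is bounded on $L^2(M)$ by Theorem~\ref{existence of G_tau operator} and multiplication by $e^{\pm\tau x_1}$ and by $q\in L^\infty(M)$ are bounded on $L^2(M)$, the operator $B$ is bounded on $L^2(M)$; and for $u\in b_q(M)$ the identity $e^{\tau x_1}(-\Delta_g)e^{-\tau x_1}G_\tau=\id$ together with $(-\Delta_g)u=-qu$ gives $(-\Delta_g)(Bu)=-qu+qu=0$, so $B$ restricts to a bounded operator $b_q(M)\to b_0(M)$ (with the $L^2$ subspace topologies). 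Recall also from Proposition~\ref{solvability of the Dirichlet problem} that $P_q:\mathcal H_g(\p M)\to b_q(M)$ is a homeomorphism, and that $P_0:\mathcal H_g(\p M)\to b_0(M)$ is, by the very definition of the norm on $\mathcal H_g(\p M)$, an isometric isomorphism.

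Next I would upgrade the equivalence in Proposition~\ref{solvability results are equivalent} to an operator identity: given $h\in\mathcal H_g(\p M)$, apply that proposition with $f:=Ah$ to get $BP_q(h)=P_0(Ah)$. Since $h$ was arbitrary this reads $P_0\circ A=B\circ P_q$ as operators $\mathcal H_g(\p M)\to b_0(M)$. Because $P_0$ and $P_q$ are topological isomorphisms we may rewrite this as $A=P_0^{-1}BP_q$ and equivalently $B=P_0AP_q^{-1}$. A composition of a bounded operator with topological isomorphisms on either side is a topological isomorphism precisely when the middle factor is; hence $A$ is an isomorphism on $\mathcal H_g(\p M)$ if and only if $B$ is an isomorphism $b_q(M)\to b_0(M)$, which is the assertion of the corollary.

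I do not expect a serious obstacle here: all the substantive work is already contained in Propositions~\ref{solvability of the Dirichlet problem} and~\ref{solvability results are equivalent}, and the corollary is a soft bookkeeping consequence. The only points that need a line of justification are that $B$ actually maps $b_q(M)$ into $b_0(M)$ and is bounded there, and that ``isomorphism'' is meant in the topological sense, so that inverting the above conjugation stays within bounded operators---which is immediate from the explicit formulas $A=P_0^{-1}BP_q$ and $B=P_0AP_q^{-1}$, or alternatively from the Open Mapping Theorem.
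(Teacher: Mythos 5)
Your argument is correct and is exactly the reasoning the paper leaves implicit: the corollary is stated without proof as an immediate consequence of Proposition~\ref{solvability results are equivalent}, and your conjugation identity $P_0\circ\bigl(\id+S_\tau(\Lambda_{g,q}-\Lambda_{g,0})\bigr)=\bigl(\id+e^{-\tau x_1}G_\tau e^{\tau x_1}q\bigr)\circ P_q$, combined with the fact that $P_0$ and $P_q$ are homeomorphisms (Proposition~\ref{solvability of the Dirichlet problem}), is precisely the intended bookkeeping. No gaps.
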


The following proposition combined together with the above two results implies the solvability of the boundary integral equation \eqref{boundary integral equation}.
\begin{Proposition}\label{solvability of boundary integral equation}
Suppose that $q\in L^\infty(M)$ and $0$ is not a Dirichlet eigenvalue of $-\Delta_g+q$ in $M$. There is $\tau_0>0$ such that for all $\tau\in\R$ with $|\tau|\ge\tau_0$, the operator $\id+e^{-\tau x_1}G_\tau e^{\tau x_1}q:b_q(M)\to b_0(M)$ is an isomorphism.
\end{Proposition}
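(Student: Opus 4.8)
The plan is to strip off the exponential weights and reduce the statement to a Neumann series argument on $L^2(M)$. Since $M$ is compact, $x_1$ is bounded on $M$, so for each fixed $\tau$ multiplication by $e^{\tau x_1}$ is a bounded invertible operator on $L^2(M)$, with inverse multiplication by $e^{-\tau x_1}$. Since multiplication operators commute, a direct computation gives, on all of $L^2(M)$,
$$
\bigl(\id+e^{-\tau x_1}G_\tau e^{\tau x_1}q\bigr)u=e^{-\tau x_1}\bigl(\id+G_\tau q\bigr)\bigl(e^{\tau x_1}u\bigr),
$$
where $q$ is viewed as the multiplication operator $u\mapsto qu$. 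Hence $\id+e^{-\tau x_1}G_\tau e^{\tau x_1}q$ is conjugate, via the isomorphism $u\mapsto e^{\tau x_1}u$ of $L^2(M)$, to $\id+G_\tau q$, and it will be enough to show (i) that $\id+G_\tau q$ is an isomorphism of $L^2(M)$, and (ii) that the conjugated operator carries $b_q(M)$ onto $b_0(M)$.

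For (i) I would invoke the norm bound of Theorem~\ref{existence of G_tau operator}, which gives $\|G_\tau(qf)\|_{L^2(M)}\le (C_0/|\tau|)\,\|q\|_{L^\infty(M)}\,\|f\|_{L^2(M)}$; after enlarging $\tau_0$ so that $C_0\|q\|_{L^\infty(M)}<\tau_0$, the operator $f\mapsto G_\tau(qf)$ has operator norm strictly less than $1$ on $L^2(M)$ for $|\tau|\ge\tau_0$, so the Neumann series yields a bounded inverse $(\id+G_\tau q)^{-1}$, and consequently $e^{-\tau x_1}(\id+G_\tau q)^{-1}e^{\tau x_1}$ is a bounded two-sided inverse of $\id+e^{-\tau x_1}G_\tau e^{\tau x_1}q$ on $L^2(M)$. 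For (ii) I would use the two structural properties of $G_\tau$ from Theorem~\ref{existence of G_tau operator}, namely $e^{\tau x_1}(-\Delta_g)e^{-\tau x_1}G_\tau=\id$ and $G_\tau\colon L^2(M)\to e^{\tau x_1}H_{\Delta_g}(M)$. If $u\in b_q(M)$, then $-\Delta_g u=-qu\in L^2(M)$, the term $e^{-\tau x_1}G_\tau(e^{\tau x_1}qu)$ lies in $H_{\Delta_g}(M)$, and a short conjugation computation using the first property gives $\Delta_g\bigl((\id+e^{-\tau x_1}G_\tau e^{\tau x_1}q)u\bigr)=0$, so the image belongs to $b_0(M)$. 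Conversely, given $v\in b_0(M)$, I would set $w:=(\id+G_\tau q)^{-1}(e^{\tau x_1}v)$ and $u:=e^{-\tau x_1}w$; rewriting $w=e^{\tau x_1}v-G_\tau(qw)$ and applying the same two properties (together with $\Delta_g v=0$) shows $u\in H_{\Delta_g}(M)$ and $(-\Delta_g+q)u=0$, i.e.\ $u\in b_q(M)$, while the factorization identity gives $(\id+e^{-\tau x_1}G_\tau e^{\tau x_1}q)u=v$. Combined with injectivity on $L^2(M)$, this shows $\id+e^{-\tau x_1}G_\tau e^{\tau x_1}q$ is a bijection $b_q(M)\to b_0(M)$ whose inverse is the restriction of the bounded operator $e^{-\tau x_1}(\id+G_\tau q)^{-1}e^{\tau x_1}$, hence an isomorphism.

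The one point that genuinely requires care is that one must not try to estimate $e^{-\tau x_1}G_\tau e^{\tau x_1}$ as an operator directly: the product of the sup-norms of $e^{\tau x_1}$ and $e^{-\tau x_1}$ on $M$ grows exponentially in $|\tau|$, so a naive perturbation bound is useless. The whole point is that after conjugating, the only estimate one needs is the gain $C_0/|\tau|$ in $\|G_\tau\|$, which is exactly what makes the Neumann series converge; all remaining steps are routine verifications that the weighted Laplace identities of Theorem~\ref{existence of G_tau operator} are valid in $H_{\Delta_g}(M)$.
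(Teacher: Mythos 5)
Your proposal is correct and follows essentially the same route as the paper: conjugate by $e^{\tau x_1}$ (using that $q$ and the exponential weights commute as multiplication operators), invert $\id+G_\tau q$ by a Neumann series via the $C_0/|\tau|$ bound of Theorem~\ref{existence of G_tau operator}, and then use $e^{\tau x_1}(-\Delta_g)e^{-\tau x_1}G_\tau=\id$ together with $G_\tau:L^2(M)\to e^{\tau x_1}H_{\Delta_g}(M)$ to check that the operator and its inverse exchange $b_q(M)$ and $b_0(M)$. The only cosmetic difference is that you verify the forward mapping property $b_q(M)\to b_0(M)$ inside this proof, whereas the paper records that computation in Proposition~\ref{solvability results are equivalent} and here only checks that the $L^2$ inverse carries $b_0(M)$ into $b_q(M)$.
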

\begin{proof}
Since $\|G_\tau\|_{L^2(M)\to L^2(M)}\le \mathcal{O}(|\tau|^{-1})$ by Theorem~\ref{existence of G_tau operator}, the operator $\id+G_\tau q:L^2(M)\to L^2(M)$ is an isomorphism for big enough $|\tau|\gg 1$. Then for such $\tau$, the operator $\id+e^{-\tau x_1}G_\tau e^{\tau x_1}q:L^2(M)\to L^2(M)$ is an isomorphism whose inverse is $e^{-\tau x_1}(\id+G_\tau q)^{-1}e^{\tau x_1}$. Let $u\in b_0$ and $w=e^{-\tau x_1}(\id+G_\tau q)^{-1}e^{\tau x_1}u$. We need to show that $w\in b_q$. Applying $\id+e^{-\tau x_1}G_\tau e^{\tau x_1}q$ to $w$, we get that
$$
w+e^{-\tau x_1}G_\tau e^{\tau x_1}qw=u.
$$
Since $e^{\tau x_1}(-\Delta_g)e^{-\tau x_1}G_\tau=\id$ (by Theorem~\ref{existence of G_tau operator}), we get $(-\Delta_g)e^{-\tau x_1}G_\tau e^{\tau x_1}=\id$ and hence $(-\Delta_g+q)w=0$.
\end{proof}

\section{Complex geometrical optics solutions}\label{S7}
Let $q\in L^\infty(M)$ be such that $0$ is not a Dirichlet eigenvalue of $-\Delta_g+q$ in $M$, and let $\tau\in\R$ with $|\tau|\ge\tau_0$. In this section we construct the complex geometrical optics solutions for the Schr\"odinger equation $(-\Delta_g+q)u=0$ in $M$ whose trace is supported in $\Gamma_{\sgn(\tau)}$.

\subsection{\bf Solution operator}
To construct the complex geometrical optics solutions, we need to generalize Proposition~\ref{solvability result giving H_tau} to the case when the solution is determined on $S^-_{\tau}$.
\medskip

Set $\mathcal D=\{\psi\in C^\infty(M):\mathtt{tr}(\psi)=0\}$ and define
$$
M_\tau=\{(e^{-\tau x_1}(-\Delta_g)e^{\tau x_1}\psi,\mathtt{tr}_\nu(\psi)|_{S^+_{\tau}}):\psi\in \mathcal D\}\subset L^2(M)\times L^2(S^+_{\tau}).
$$

\begin{Lemma}\label{orthogonal complement to M_tau}
For $\tau\in\R$ with $|\tau|>0$, let $(u,u^+_{\tau})\in L^2(M)\times L^2(S^+_{\tau})$. Then $(u,u^+_{\tau})$ is in orthogonal to the closure of $M_\tau$ if and only if $e^{\tau x_1}(-\Delta_g)e^{-\tau x_1} u=0$, $\mathtt{tr}(u)|_{S^-_{\tau}}=0$ and $\mathtt{tr}(u)|_{S^+_{\tau}}=u^+_{\tau}$.
\end{Lemma}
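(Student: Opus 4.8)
\emph{Plan.} The statement is the natural upgrade of Lemma~\ref{orthogonal projection of pi^perp_tau}, where now the Neumann datum recorded on $S^+_\tau$ is exactly what accounts for the boundary term in Green's identity; accordingly I would prove it directly from Corollary~\ref{generalized Green's identity}, using no Carleman estimate. Since the inner product on $L^2(M)\times L^2(S^+_\tau)$ is continuous, $(u,u^+_\tau)$ is orthogonal to $\overline{M_\tau}$ if and only if it is orthogonal to $M_\tau$, i.e. if and only if
\[
(u|e^{-\tau x_1}(-\Delta_g)e^{\tau x_1}\psi)_{L^2(M)}+(u^+_\tau|\mathtt{tr}_\nu(\psi)|_{S^+_{\tau}})_{S^+_{\tau}}=0\qquad\text{for all }\psi\in\mathcal D .
\]
The first thing to secure is that the traces of $u$ make sense. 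Testing this relation against $\psi\in C^\infty_0(M^{\rm int})$ (for which $\mathtt{tr}_\nu(\psi)|_{S^+_\tau}=0$) gives $e^{\tau x_1}(-\Delta_g)e^{-\tau x_1}u=0$ in $\mathcal D'(M^{\rm int})$, hence $(-\Delta_g)(e^{-\tau x_1}u)=0$ and $e^{-\tau x_1}u\in H_{\Delta_g}(M)$; this is also the hypothesis in the ``if'' direction, so from now on I assume it, and then $\mathtt{tr}(u):=e^{\tau\varphi}\mathtt{tr}(e^{-\tau x_1}u)$ is a well-defined element of $H^{-1/2}(\p M)$ by Proposition~\ref{extensions of the trace operators}.

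The core is a single computation. For $\psi\in\mathcal D\subset H^2(M)$, apply Corollary~\ref{generalized Green's identity} to $e^{-\tau x_1}u\in H_{\Delta_g}(M)$ and $e^{\tau x_1}\psi\in H^2(M)$. Using $(-\Delta_g)(e^{-\tau x_1}u)=0$, $\mathtt{tr}(e^{\tau x_1}\psi)=e^{\tau\varphi}\mathtt{tr}(\psi)=0$, $\mathtt{tr}_\nu(e^{\tau x_1}\psi)=e^{\tau\varphi}\mathtt{tr}_\nu(\psi)$ (the term $\tau(\p_\nu\varphi)e^{\tau\varphi}\mathtt{tr}(\psi)$ dropping because $\mathtt{tr}(\psi)=0$), and $\mathtt{tr}(e^{-\tau x_1}u)=e^{-\tau\varphi}\mathtt{tr}(u)$, the smooth real weights $e^{\pm\tau\varphi}$ cancel in the boundary duality and one obtains
\[
(u|e^{-\tau x_1}(-\Delta_g)e^{\tau x_1}\psi)_{L^2(M)}=-\<\mathtt{tr}(u),\mathtt{tr}_\nu(\psi)\>_{H^{-1/2,1/2}(\p M)}.
\]
Let $v^+$ denote the extension of $u^+_\tau$ by zero to all of $\p M$, so that $(u^+_\tau|\mathtt{tr}_\nu(\psi)|_{S^+_\tau})_{S^+_\tau}=\<v^+,\mathtt{tr}_\nu(\psi)\>_{H^{-1/2,1/2}(\p M)}$. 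Substituting the displayed identity into the orthogonality relation, I get that $(u,u^+_\tau)\perp M_\tau$ is equivalent to
\[
\<\mathtt{tr}(u)-v^+,\ \mathtt{tr}_\nu(\psi)\>_{H^{-1/2,1/2}(\p M)}=0\qquad\text{for all }\psi\in\mathcal D .
\]

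Both implications now follow. For ``if'': if $\mathtt{tr}(u)|_{S^-_\tau}=0$ and $\mathtt{tr}(u)|_{S^+_\tau}=u^+_\tau$ then $\mathtt{tr}(u)=v^+$ in $H^{-1/2}(\p M)$, so the last display holds trivially and $(u,u^+_\tau)\perp M_\tau$, hence $\perp\overline{M_\tau}$. For ``only if'': for every $\phi\in C^\infty(\p M)$ there is $\psi\in\mathcal D$ with $\mathtt{tr}_\nu(\psi)=\phi$ (a local construction in a collar $\p M\times[0,\varepsilon)$, in the spirit of the surjectivity of the trace map used in the proof of Proposition~\ref{extensions of the trace operators}); since $C^\infty(\p M)$ is dense in $H^{1/2}(\p M)$ and $\mathtt{tr}(u)-v^+\in H^{-1/2}(\p M)$, the last display forces $\mathtt{tr}(u)=v^+$ in $H^{-1/2}(\p M)$, which is exactly $\mathtt{tr}(u)|_{S^-_\tau}=0$ and $\mathtt{tr}(u)|_{S^+_\tau}=u^+_\tau$, together with the already established $e^{\tau x_1}(-\Delta_g)e^{-\tau x_1}u=0$.

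The only points that require any care are the a priori regularity step that makes $\mathtt{tr}(u)$ meaningful (dispatched by testing against $C^\infty_0(M^{\rm int})$) and the routine verifications that $\mathtt{tr}_\nu(e^{\tau x_1}\psi)=e^{\tau\varphi}\mathtt{tr}_\nu(\psi)$ when $\mathtt{tr}(\psi)=0$ and that the real weights cancel in the duality pairing; I do not expect a genuine obstacle, since everything else is the same bookkeeping as in Lemma~\ref{orthogonal projection of pi^perp_tau}.
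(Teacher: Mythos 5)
Your proof is correct and follows essentially the same route as the paper's: test the orthogonality relation against $\psi\in C^\infty_0(M^{\rm int})$ to get $e^{\tau x_1}(-\Delta_g)e^{-\tau x_1}u=0$, then apply the generalized Green's identity of Corollary~\ref{generalized Green's identity} to identify the boundary term and read off the trace conditions, with the converse being the same computation run backwards. The only difference is that you spell out details the paper leaves implicit (that $e^{-\tau x_1}u\in H_{\Delta_g}(M)$ so its trace is defined, the cancellation of the weights $e^{\pm\tau\varphi}$ in the duality pairing, and the density/surjectivity argument needed to conclude $\mathtt{tr}(u)=v^+$), all of which are correctly handled.
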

\begin{proof}
Suppose that $(u,u^+_{\tau})$ is orthogonal to $M_\tau$. Then for $\psi\in \mathcal D$, we have
$$
(u|e^{-\tau x_1}(-\Delta_g)e^{\tau x_1}\psi)_{L^2(M)}+(u^+_{\tau}|\mathtt{tr}_\nu(\psi)|_{S^+_{\tau}})_{S^+_{\tau}}=0.
$$
Taking $\psi\in C^\infty_0(M^{\rm int})$, this gives $e^{\tau x_1}(-\Delta_g)e^{-\tau x_1} u=0$.
\medskip

Now, consider arbitrary $\psi\in \mathcal D$. Using the generalized Green's identity from Corollary~\ref{generalized Green's identity}, we get
$$
(u|e^{-\tau x_1}(-\Delta_g)e^{\tau x_1}\psi)_{L^2(M)}=-\<\mathtt{tr}(u),\mathtt{tr}_\nu(\psi)\>_{H^{-1/2,1/2}(\p M)}.
$$
Combining this together with the previous equality gives that $\mathtt{tr}(u)|_{S^-_{\tau}}=0$ and $\mathtt{tr}(u)|_{S^+_{\tau}}=u^+_{\tau}$.
\medskip

To prove the converse, suppose that $(u,u^+_{\tau})$ is such that $e^{\tau x_1}(-\Delta_g)e^{-\tau x_1} u=0$, $\mathtt{tr}(u)|_{S^-_{\tau}}=0$ and $\mathtt{tr}(u)|_{S^+_{\tau}}=u^+_{\tau}$. Then for $\psi\in \mathcal D$, we have
\begin{align*}
(u|e^{-\tau x_1}(-\Delta_g)e^{\tau x_1}&\psi)_{L^2(M)}\\
&=(e^{\tau x_1}(-\Delta_g)e^{-\tau x_1}u|\psi)_{L^2(M)}-\<\mathtt{tr}(u),\mathtt{tr}_\nu(\psi)\>_{H^{-1/2,1/2}(\p M)}\\
&=-\<\mathtt{tr}(u),\mathtt{tr}_\nu(\psi)\>_{H^{-1/2,1/2}(\p M)}\\
&=-(u^+_{\tau}|\mathtt{tr}_\nu (\psi)|_{S^+_{\tau}})_{S^+_{\tau}},
\end{align*}
which means that $(u,u^+_{\tau})$ is orthogonal to $M_\tau$.
\end{proof}
Let us denote by $m_\tau$ the operator of orthogonal projection onto the closure of $M_\tau$ in $L^2(M)\times L^2(S^+_{\tau})$.

\begin{Proposition}\label{solvability result giving R_tau}
Let $(M,g)$ be an admissible manifold. There are constants $C_0,\tau_0>0$ such that for all $\tau\in \R$ with $|\tau|\ge\tau_0$, $\delta>0$ and for given $f\in L^2(M)$ and $f^-_{\tau}\in L^2(S^-_{\tau})$, there exists a unique solution $u\in L^2(M)$ of the equation
$$
e^{\tau x_1}(-\Delta_g)e^{-\tau x_1} u=f\quad\text{in}\quad M
$$
such that $\mathtt{tr}(u)|_{S^-_\tau}=f^-_{\tau}$, $m_\tau(u,\mathtt{tr}(u)|_{S^+_{\tau}})=(u,\mathtt{tr}(u)|_{S^+_{\tau}})$
and
$$
\|u\|_{L^2(M)}\le C_0\frac{1}{|\tau|}\|f\|_{L^2(M)}+C_0\frac{1}{(\delta |\tau|)^{1/2}}\|f^-_{\tau}\|_{L^2(S^-_{\tau,\delta})}+C_0\|f^-_{\tau}\|_{L^2(S^0_{\tau,\delta})}.
$$
\end{Proposition}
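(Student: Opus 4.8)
The plan is to adapt the duality argument used for Proposition~\ref{solvability result giving H_tau} to the product Hilbert space $L^2(M)\times L^2(S^+_\tau)$, with the orthogonality information now carried by the subspace $M_\tau$ and by Lemma~\ref{orthogonal complement to M_tau}. First I would record the weak formulation: applying the generalized Green's identity of Corollary~\ref{generalized Green's identity} to $e^{-\tau x_1}u$ and $e^{\tau x_1}\psi$, one checks that any $u$ with $e^{\tau x_1}(-\Delta_g)e^{-\tau x_1}u=f$, $\mathtt{tr}(u)|_{S^-_\tau}=f^-_\tau$ and $\mathtt{tr}(u)|_{S^+_\tau}\in L^2(S^+_\tau)$ satisfies
$$
((u,\mathtt{tr}(u)|_{S^+_\tau})\,|\,(e^{-\tau x_1}(-\Delta_g)e^{\tau x_1}\psi,\mathtt{tr}_\nu(\psi)|_{S^+_\tau}))_{L^2(M)\times L^2(S^+_\tau)}=(f|\psi)_{L^2(M)}-(f^-_\tau|\mathtt{tr}_\nu(\psi)|_{S^-_\tau})_{S^-_\tau}
$$
for every $\psi\in\mathcal D=\{\psi\in C^\infty(M):\mathtt{tr}(\psi)=0\}$. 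Accordingly I would define a linear functional $L$ on $M_\tau$ by setting
$$
L(e^{-\tau x_1}(-\Delta_g)e^{\tau x_1}\psi,\mathtt{tr}_\nu(\psi)|_{S^+_\tau}):=(f|\psi)_{L^2(M)}-(f^-_\tau|\mathtt{tr}_\nu(\psi)|_{S^-_\tau})_{S^-_\tau},\qquad \psi\in\mathcal D.
$$

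The heart of the argument is to prove that $L$ is bounded (hence well defined) on $M_\tau$ with
$$
\|L\|\le \frac{C_0}{|\tau|}\|f\|_{L^2(M)}+\frac{C_0}{(\delta|\tau|)^{1/2}}\|f^-_\tau\|_{L^2(S^-_{\tau,\delta})}+C_0\|f^-_\tau\|_{L^2(S^0_{\tau,\delta})}.
$$
One estimates $|(f|\psi)_{L^2(M)}|\le\|f\|_{L^2(M)}\|\psi\|_{L^2(M)}$, splits $|(f^-_\tau|\mathtt{tr}_\nu(\psi)|_{S^-_\tau})_{S^-_\tau}|$ over the two pieces $S^-_{\tau,\delta}$ and $S^0_{\tau,\delta}$ of $S^-_\tau$, and then bounds $\|\psi\|_{L^2(M)}$, $\|\p_\nu\psi\|_{S^-_{\tau,\delta}}$ and $\|\p_\nu\psi\|_{S^0_{\tau,\delta}}$ by means of the Carleman estimate \eqref{Carleman estimate} with $q=0$, applied with the sign of the large parameter and the boundary decomposition $\p M=S^+_\tau\cup S^-_{\tau,\delta}\cup S^0_{\tau,\delta}$ matched to those appearing in $M_\tau$: the left-hand side of \eqref{Carleman estimate} then supplies exactly the combination $(\delta|\tau|)^{1/2}\|\p_\nu\psi\|_{S^-_{\tau,\delta}}+\|\p_\nu\psi\|_{S^0_{\tau,\delta}}+|\tau|\|\psi\|_{L^2(M)}$, while its right-hand side is bounded by $\|e^{-\tau x_1}(-\Delta_g)e^{\tau x_1}\psi\|_{L^2(M)}+|\tau|^{1/2}\|\p_\nu\psi\|_{S^+_\tau}$, i.e. by the norm in $L^2(M)\times L^2(S^+_\tau)$ of the argument of $L$. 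Assembling these inequalities gives the stated bound; making the boundary bookkeeping match up exactly is the one nonroutine step, and it is the main obstacle of the proof.

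Granting this bound, the remainder follows the pattern of Proposition~\ref{solvability result giving H_tau}. By the Hahn--Banach theorem extend $L$ continuously to the closure $\overline{M_\tau}$, declare it to be $0$ on the orthogonal complement of $\overline{M_\tau}$, and use the Riesz representation theorem to write the resulting functional on $L^2(M)\times L^2(S^+_\tau)$ as $(\,\cdot\,|(u,u^+_\tau))$ for some $(u,u^+_\tau)\in L^2(M)\times L^2(S^+_\tau)$; then $\|u\|_{L^2(M)}\le\|(u,u^+_\tau)\|\le\|L\|$ is precisely the claimed estimate, and since the functional vanishes on the orthogonal complement of $\overline{M_\tau}$ we get $(u,u^+_\tau)\in\overline{M_\tau}$, that is, $m_\tau(u,u^+_\tau)=(u,u^+_\tau)$. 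Testing the representation identity against $\psi\in C^\infty_0(M^{\rm int})$ yields $e^{\tau x_1}(-\Delta_g)e^{-\tau x_1}u=f$, so $u\in e^{\tau x_1}H_{\Delta_g}(M)$ and its traces are well defined; testing against arbitrary $\psi\in\mathcal D$ and running the weak-formulation identity in reverse (now legitimately, via Corollary~\ref{generalized Green's identity}) gives
$$
(\mathtt{tr}(u)|_{S^+_\tau}-u^+_\tau\,|\,\mathtt{tr}_\nu(\psi)|_{S^+_\tau})_{S^+_\tau}+(\mathtt{tr}(u)|_{S^-_\tau}-f^-_\tau\,|\,\mathtt{tr}_\nu(\psi)|_{S^-_\tau})_{S^-_\tau}=0
$$
for all $\psi\in\mathcal D$, and since $\{\mathtt{tr}_\nu(\psi):\psi\in\mathcal D\}$ is dense in $L^2(\p M)$ this forces $\mathtt{tr}(u)|_{S^-_\tau}=f^-_\tau$ and $\mathtt{tr}(u)|_{S^+_\tau}=u^+_\tau\in L^2(S^+_\tau)$. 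Hence $u$ has all the required properties.

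Finally, for uniqueness, if $u$ and $u'$ both satisfy the conclusions then $v:=u-u'$ has $e^{\tau x_1}(-\Delta_g)e^{-\tau x_1}v=0$, $\mathtt{tr}(v)|_{S^-_\tau}=0$ and $m_\tau(v,\mathtt{tr}(v)|_{S^+_\tau})=(v,\mathtt{tr}(v)|_{S^+_\tau})$; thus $(v,\mathtt{tr}(v)|_{S^+_\tau})$ lies in $\overline{M_\tau}$ and, by Lemma~\ref{orthogonal complement to M_tau}, is orthogonal to $\overline{M_\tau}$, so it vanishes and $u=u'$.
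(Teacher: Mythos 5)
Your proposal is correct and follows essentially the same route as the paper: the same dual functional on $M_\tau$ bounded via the Carleman estimate \eqref{Carleman estimate}, extension to the closure and zero on the orthogonal complement, Riesz representation in $L^2(M)\times L^2(S^+_\tau)$, identification of the equation and of the boundary traces through Corollary~\ref{generalized Green's identity}, and uniqueness via Lemma~\ref{orthogonal complement to M_tau}. The only cosmetic differences (the explicit Hahn--Banach step and the density of $\{\mathtt{tr}_\nu(\psi):\psi\in\mathcal D\}$ in $L^2(\p M)$) are implicit in the paper's argument.
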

\begin{proof}
Define a linear functional $l:M_\tau\to\C$ by
$$
l(e^{-\tau x_1}(-\Delta_g)e^{\tau x_1}\psi,\mathtt{tr}_\nu(\psi)|_{S^+_\tau})=(f|\psi)_{L^2(M)}-\(f^-_\tau|\mathtt{tr}_\nu(\psi)\)_{S^-_\tau}.
$$
On the orthogonal complement of $M_\tau$ we define $l$ to be zero. By the Carleman estimate~\eqref{Carleman estimate}, we have
\begin{align*}
|l(&e^{-\tau x_1}(-\Delta_g)e^{\tau x_1}\psi,\mathtt{tr}_\nu(\psi)|_{S^+_\tau})|\\
&\le \|f\|_{L^2(M)}\|\psi\|_{L^2(M)}+\|f^-_\tau\|_{S^-_{\tau,\delta}}\|\mathtt{tr}_\nu(\psi)\|_{S^-_{\tau,\delta}}+\|f^-_\tau\|_{S^0_{\tau,\delta}}\|\mathtt{tr}_\nu(\psi)\|_{S^0_{\tau,\delta}}\\
&\le C_0\(\frac{1}{|\tau|}\|f\|_{L^2(M)}+\frac{1}{(\delta |\tau|)^{1/2}}\|f^-_{\tau}\|_{L^2(S^-_{\tau,\delta})}+\|f^-_{\tau}\|_{L^2(S^0_{\tau,\delta})}\)\\
&\qquad\times \(\|e^{-\tau x_1}(-\Delta_g)e^{\tau x_1}\psi\|_{L^2(M)}+|\tau|^{1/2}\|\mathtt{tr}_\nu(\psi)\|_{S^+_\tau}\).
\end{align*}
By Riesz representation theorem, there is $(u,u^+_\tau)\in L^2(M)\times L^2(S^+_\tau)$ such that
$$
l(w,w^+_\tau)=(u|w)_{L^2(M)}+(u^+_\tau|w^+_\tau)_{S^+_\tau},
$$
for $(w,w^+_\tau)\in L^2(M)\times L^2(S^+_\tau)$. In particular, for $\psi\in\mathcal D$, we have
$$
(u|e^{-\tau x_1}(-\Delta_g)e^{\tau x_1}\psi)_{L^2(M)}+(u^+_\tau|\mathtt{tr}_\nu(\psi))_{S^+_\tau}=(f|\psi)_{L^2(M)}-\(f^-_\tau|\mathtt{tr}_\nu(\psi)\)_{S^-_\tau}.
$$
Taking $\psi\in C^\infty_0(M^{\rm int})$, gives that $e^{\tau x_1}(-\Delta_g)e^{-\tau x_1} u=f$. Moreover,
$$
\|u\|_{L^2(M)}\le C_0\frac{1}{|\tau|}\|f\|_{L^2(M)}+C_0\frac{1}{(\delta |\tau|)^{1/2}}\|f^-_{\tau}\|_{L^2(S^-_{\tau,\delta})}+C_0\|f^-_{\tau}\|_{L^2(S^0_{\tau,\delta})}.
$$
Since $l\equiv 0$ on the orthogonal complement of $M_{\tau}$ in $L^2(M)\times L^2(S^+_\tau)$, we have that $(u,\mathtt{tr}(u)|_{S^+_\tau})$ is in the closure of $M_{\tau}$ and hence $m_\tau(u,\mathtt{tr}(u)|_{S^+_\tau})=(u,\mathtt{tr}(u)|_{S^+_\tau})$.
\medskip

For arbitrary $\psi\in \mathcal D$, using the generalized Green's identity from Corollary~\ref{generalized Green's identity}, we get
$$
(u|e^{-\tau x_1}(-\Delta_g)e^{\tau x_1}\psi)_{L^2(M)}+\<\mathtt{tr}(u),\mathtt{tr}_\nu(\psi)\>_{H^{-1/2,1/2}(\p M)}=(f|\psi)_{L^2(M)}.
$$
Comparing this with the previous equality, this gives that $\mathtt{tr}(u)|_{S^-_\tau}=f^-_\tau$ and $\mathtt{tr}(u)|_{S^+_\tau}=u^+_\tau$.
\medskip

Now, we prove uniqueness. Suppose that $u'\in L^2(M)$ is another solution of the equation $e^{\tau x_1}(-\Delta_g)e^{-\tau x_1} u'=f$ satisfying all the conditions of the proposition. Then $e^{\tau x_1}(-\Delta_g)e^{-\tau x_1}(u-u')=0$, $\mathtt{tr}(u-u')|_{S^-_\tau}=0$, $\mathtt{tr}(u-u')|_{S^+_\tau}=u^+_\tau-u'{}^+_\tau$, and $(u-u',u^+_\tau-u'{}^+_\tau)$ is in the closure of $M_\tau$. However, by Lemma~\ref{orthogonal complement to M_tau}, $(u-u',u^+_\tau-u'{}^+_\tau)$ is orthogonal to the closure of $M_\tau$. Thus, we obtain $u-u'=0$ which finishes the proof.
\end{proof}

Let $R_\tau:L^2(M)\times L^2(S^-_\tau)\to L^2(M)$ be the solution operator obtained in the previous result. In other words, the operator $R_\tau$ is defined by $R_\tau (f,f^-_\tau)=u$, where $u,f,f^-_\tau$ are as in Proposition~\ref{solvability result giving R_tau}. 


\subsection{Construction of complex geometrical optics solutions}
Now, we are ready to construct complex geometrical optics solutions whose traces are supported in $\Gamma_{\sgn(\tau)}$. These are the solutions of the form
\begin{equation}\label{form of CGO}
u=e^{-\tau x_1}(a+r_0),
\end{equation}
where $r_0$ is a correction term and $a$ is an amplitude.

\subsubsection{\bf Construction in the case $q=0$}\label{CGO q=0} Recall that the transversal manifold $(M_0,g_0)$ is assumed to be simple. Let $\gamma:[0,T]\to M_0$ be the given geodesic in $(M_0,g_0)$. Choose another simple manifold $(\widetilde M_0,g_0)$ such that $(M_0,g_0)\subset\subset(\widetilde M_0^{\rm int},g_0)$ and extend the geodesic $\gamma$ in $\widetilde M_0$. Choose $\varepsilon>0$ such that $\gamma(t)\in \widetilde M_0\setminus M_0$ for all $t\in (-2\varepsilon,0)\cup (T,2\varepsilon)$ and set $p=\gamma(-\varepsilon)$ which is in $\widetilde M_0\setminus M_0$. Simplicity of $(\widetilde M_0,g_0)$ implies that there are globally defined polar coordinates $(r,\theta)$ centered at $p$. In these polar coordinates $\gamma$ corresponds to $r\mapsto (r,\theta_0)$ for some $\theta_0\in S^{n-2}$. Following \cite[Section~5.2]{DKSU}, we choose the following specific $a$:
$$
a(x_1,r,\theta)=e^{-i\tau r}|g|^{-1/4}c^{1/2}e^{i\lambda(x_1+ir)}b(\theta),
$$
where $\lambda\in \R$ and $b\in C^\infty(S^{n-2})$ is fixed such that $b$ is supported near $\theta_0$ so that $a=0$ near ${\p M_0\setminus E}$.
\medskip

Assume now that $u$ has the required form~\eqref{form of CGO}. Then the equation $(-\Delta_g)u_0=0$ is equivalent to
\begin{equation}\label{equation for r_0}
e^{\tau x_1}(-\Delta_g)e^{-\tau x_1}r_0=f,
\end{equation}
where $f:=e^{\tau x_1}\Delta_g e^{-\tau x_1}a$. Set $\Phi=x_1+ir$. Then a starightforward calculation shows that
\begin{align*}
f&=-e^{i\tau r}e^{\tau \Phi}(-\Delta_g)e^{-\tau\Phi}|g|^{-1/4}c^{1/2}e^{i\lambda(x_1+ir)}b(\theta)\\
&=-e^{i\tau r}[-\tau^2\<d\Phi,d\Phi\>_g+\tau(2\<d\Phi,d\cdot\>_g+\Delta_g\Phi)-\Delta_g](|g|^{-1/4}c^{1/2}e^{i\lambda(x_1+ir)}b(\theta)).
\end{align*}
Here the Riemannian inner product $\<\cdot,\cdot\>_g$ was extended as a complex bilinear form acting on complex valued $1$-forms. It was shown in \cite[Section~5]{DKSU} that $\<d\Phi,d\Phi\>_g=0$ and $(2\<d\Phi,d\cdot\>_g+\Delta_g\Phi)(|g|^{-1/4}e^{i\lambda(x_1+ir)}b(\theta))=0$. Hence, we get
\begin{equation}\label{def of f}
f=-e^{i\tau r}(-\Delta_{g})(|g|^{-1/4}c^{1/2}e^{i\lambda(x_1+ir)}b(\theta)).
\end{equation}
This shows that $\|f\|_{L^2(M)}\lesim 1$ as $\tau\to\infty$.
\medskip

We want to ensure that $\mathtt{tr}(u_0)$ is supported in $\Gamma_{\sgn(\tau)}$ where $\Gamma_{\sgn(\tau)}\supset \p M_{\sgn(\tau)}\cup \Gamma_{\rm a}$. To achieve this, following \cite{KSa}, we take a small parameter $\delta>0$ to be chosen later, and define the following sets
$$
V^{\tau,\delta}:=\{x\in S^-_{\tau}:\dist_{\p M}(x,\Gamma_{\rm i})<\delta\text{ or }x\in\p M_{\sgn(\tau)}\},\quad \Gamma_{\rm a}^{\tau,\delta}:=S^-_\tau\setminus V^{\tau,\delta}.
$$
Note that $\p M_{\sgn(\tau)}\cup \p M_{\rm tan}\subset (S^-_\tau)^{\rm int}$. For the boundary condition, we set
$$
f_{\tau,\delta}^-:=\begin{cases}
-a&\text{ on }\hfill V^{\tau,\delta},\\
0&\text{ on }\hfill\Gamma_{\rm a}^{\tau,\delta}.
\end{cases}
$$
Defining $f_{\tau,\delta}^-$ in such a way, we have $f_{\tau,\delta}^-|_{\Gamma_{\rm a}^{\tau,\delta}\cap \p M_{\rm tan}}=0$. Recall that $\Gamma_{\rm i}\subset \R\times(\p M_0\setminus E)$ and $a$ was chosen in a way to satisfy $a=0$ near ${\p M_0\setminus E}$. Therefore, $f_{\tau,\delta}^-|_{V^{\tau,\delta}\cap \p M_{\rm tan}}=0$, and hence we have
$$
f_{\tau,\delta}^-|_{\p M_{\rm tan}}=0.
$$
Since $\|f_{\tau,\delta}^-\|_{L^\infty(S^-_\tau)}\lesim 1$, we obtain the following estimates
$$
\|f^-_{\tau,\delta}\|_{L^2(S^-_{\tau,\delta})}\lesim \sigma_{\p M}(S^-_{\tau,\delta})
$$
and
\begin{multline*}
\|f^-_{\tau,\delta}\|_{L^2(S^0_{\tau,\delta})}\lesim \sigma_{\p M}\(\{x\in \p M:-\delta<\sgn(\tau)\p_\nu \varphi(x)<0\}\)\\
+\sigma_{\p M}\(\{x\in \p M:0<\sgn(\tau)\p_\nu \varphi(x)<(3|\tau|)^{-1}\}\).
\end{multline*}
If we set
$$
r_0=R_\tau(f,f_{\tau,\delta}^-),
$$
then, by Proposition~\ref{solvability result giving R_tau}, $r_0$ solves \eqref{equation for r_0} with $\mathtt{tr}(r_0)|_{S^-_\tau}=f^-_{\tau,\delta}$ and satisfies
\begin{multline*}
\|r_0\|_{L^2(M)}\lesim \frac{1}{|\tau|}+\frac{1}{(\delta |\tau|)^{1/2}}\sigma_{\p M}(S^-_{\tau,\delta})\\
+\sigma_{\p M}\(\{x\in \p M:-\delta<\sgn(\tau)\p_\nu \varphi(x)<0\}\)\\
+\sigma_{\p M}\(\{x\in \p M:0<\sgn(\tau)\p_\nu \varphi(x)<(3|\tau|)^{-1}\}\).
\end{multline*}
Thus, there is constant $C_0>0$ such that
$$
\|r_0\|_{L^2(M)}\le C_0\(\frac{1}{|\tau|}+\frac{1}{(\delta |\tau|)^{1/2}}+o_{\tau\to\infty}(1)+o_{\delta\to 0}(1)\).
$$
We choose $\delta$ such that $C_0o_{\delta\to 0}(1)\le \varepsilon/2$. Then we take $|\tau|\ge\tau_0$ large enough so that
$$
C_0\(\frac{1}{|\tau|}+\frac{1}{(\delta |\tau|)^{1/2}}+o_{\tau\to\infty}(1)\)\le \varepsilon/2.
$$
Therefore, we get $\|r_0\|_{L^2(M)}\to 0$ as $\tau\to\infty$. This will give the complex geometrical optics solution $u_0=e^{-\tau x_1}(a+r_0)$ to $(-\Delta_g)u_0=0$ whose trace is supported in $\Gamma_{\sgn(\tau)}$. Thus, we have proved the following proposition.

\begin{Proposition}\label{CGO harmonic}
Let $(M,g)$ be an admissible manifold. Suppose that $\gamma:[0,T]\to M_0$ is a given nontangential geodesic in $(M_0,g_0)$, and let $\theta_0\in S^{n-2}$ be as in the begining of Section~\ref{CGO q=0}. For $\tau\in\R$ with $|\tau|\ge\tau_0$ and $\delta>0$, for any $\lambda\in \R$ and for any $b\in C^\infty(S^{n-2})$ supported sufficiently close to $\theta_0$, there is a solution $u_0\in H_{\Delta_g}(M)$ to the equation $(-\Delta_g)u_0=0$ of the form
$$
u_0=e^{-\tau x_1}(a+r_0),
$$
and satisfying
$$
\supp(\mathtt{tr}(u_0))\subset \Gamma_{\sgn(\tau)}
$$
where
$$
a=e^{-i\tau r}|g|^{-1/4}c^{1/2}e^{i\lambda(x_1+ir)}b(\theta),
$$
and $\|r_0\|_{L^2(M)}\to 0$ as $\tau\to\infty$.
\end{Proposition}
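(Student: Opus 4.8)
\emph{Proof strategy.} The plan is to build $u_0$ by the complex geometrical optics construction on admissible manifolds of \cite{DKSU}, modified so that the Dirichlet trace of the correction term is \emph{prescribed} on $S^-_\tau$; solvability of the conjugated equation with boundary data imposed on $S^-_\tau$ is precisely what Proposition~\ref{solvability result giving R_tau} supplies. First I would fix the amplitude $a$: extend $(M_0,g_0)$ to a slightly larger simple manifold $(\widetilde M_0,g_0)$, prolong the nontangential geodesic $\gamma$ past $\p M_0$, pick a base point $p=\gamma(-\varepsilon)$ in $\widetilde M_0\setminus M_0$, and use the global polar coordinates $(r,\theta)$ about $p$ — in which $\gamma$ is the ray $\theta\equiv\theta_0$ — to take $a$ of the \cite{DKSU} form with angular cutoff $b\in C^\infty(S^{n-2})$ supported so close to $\theta_0$ that $a$ vanishes near $\p M_0\setminus E$. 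This localization is available because both endpoints of $\gamma$ lie on $E$.

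Writing $u_0=e^{-\tau x_1}(a+r_0)$ as in \eqref{form of CGO}, the equation $(-\Delta_g)u_0=0$ is equivalent to the conjugated equation \eqref{equation for r_0} for $r_0$ with source $f=e^{\tau x_1}\Delta_g e^{-\tau x_1}a$. Setting $\Phi=x_1+ir$ and invoking the eikonal identity $\<d\Phi,d\Phi\>_g=0$ together with the transport equation for the amplitude from \cite{DKSU}, the $\tau^2$- and $\tau$-order terms cancel and $f$ reduces to the $\tau$-independent expression \eqref{def of f}; hence $\|f\|_{L^2(M)}\lesim 1$ uniformly as $\tau\to\infty$.

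Next I would arrange the trace. Fixing a small $\delta>0$ and forming the sets $V^{\tau,\delta}$, $\Gamma_{\rm a}^{\tau,\delta}$ as above, I prescribe on $S^-_\tau$ the boundary value $f^-_{\tau,\delta}$ equal to $-a$ on $V^{\tau,\delta}$ and to $0$ on $\Gamma_{\rm a}^{\tau,\delta}$, and set $r_0=R_\tau(f,f^-_{\tau,\delta})$ with $R_\tau$ the solution operator of Proposition~\ref{solvability result giving R_tau}. Since $\mathtt{tr}(r_0)|_{S^-_\tau}=f^-_{\tau,\delta}$, on $V^{\tau,\delta}$ the traces of $a$ and $r_0$ cancel, while $\Gamma_{\rm a}^{\tau,\delta}$ and $S^+_\tau$ are contained in $\Gamma_{\sgn(\tau)}$ by construction; hence $\supp(\mathtt{tr}(u_0))\subset\Gamma_{\sgn(\tau)}$. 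For the size of $r_0$, the decisive point is that $a$ vanishes near $\p M_0\setminus E$ and $\Gamma_{\rm i}\subset\R\times(\p M_0\setminus E)$, so $f^-_{\tau,\delta}|_{\p M_{\rm tan}}=0$; consequently $f^-_{\tau,\delta}$ is supported only where $\sgn(\tau)\p_\nu\varphi$ is either bounded away from $0$, so that $\|f^-_{\tau,\delta}\|_{L^2(S^-_{\tau,\delta})}$ stays bounded and enters the estimate of Proposition~\ref{solvability result giving R_tau} with the small weight $(\delta|\tau|)^{-1/2}$, or tends to $0$ with $\tau$, so that $\|f^-_{\tau,\delta}\|_{L^2(S^0_{\tau,\delta})}\to0$ as $\tau\to\infty$ and $\delta\to0$. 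Combined with $\|f\|_{L^2(M)}\lesim 1$ this gives $\|r_0\|_{L^2(M)}\le C_0(|\tau|^{-1}+(\delta|\tau|)^{-1/2}+o_{\tau\to\infty}(1)+o_{\delta\to0}(1))$, so choosing $\delta$ small first and then $|\tau|\ge\tau_0$ large yields $\|r_0\|_{L^2(M)}\to 0$, which is what the proposition claims.

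I expect the main obstacle to be exactly the compatibility underlying the third paragraph: the prescribed datum $f^-_{\tau,\delta}$ on $S^-_\tau$ must simultaneously cancel $\mathtt{tr}(a)$ on all of $S^-_\tau$ lying outside the accessible part (so that $\supp(\mathtt{tr}(u_0))\subset\Gamma_{\sgn(\tau)}$) and vanish identically on $\p M_{\rm tan}$ — otherwise the term $\|f^-_{\tau,\delta}\|_{L^2(S^0_{\tau,\delta})}$ would carry the full surface measure of a fixed neighborhood of the tangential set, which need not tend to $0$, and the estimate for $r_0$ would fail to close. Reconciling these two requirements is what dictates both the angular localization of $a$ near $\p M_0\setminus E$ and the precise definition of $V^{\tau,\delta}$; everything else is routine bookkeeping with Proposition~\ref{solvability result giving R_tau}.
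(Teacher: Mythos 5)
Your proposal is correct and follows essentially the same route as the paper's own construction in Section~\ref{CGO q=0}: the same DKSU-type amplitude localized near $\theta_0$ so that $a$ vanishes near $\p M_0\setminus E$, the same reduction to \eqref{equation for r_0} with $\tau$-uniformly bounded source, the same prescription of $f^-_{\tau,\delta}$ on $V^{\tau,\delta}$ and $\Gamma_{\rm a}^{\tau,\delta}$ solved via $R_\tau$ from Proposition~\ref{solvability result giving R_tau}, and the same choice of $\delta$ first and then $|\tau|$ large to make $\|r_0\|_{L^2(M)}\to 0$. Your closing observation that the whole scheme hinges on $f^-_{\tau,\delta}$ vanishing on $\p M_{\rm tan}$ (forced by the angular localization of $a$ and $\Gamma_{\rm i}\subset\R\times(\p M_0\setminus E)$) is exactly the point the paper makes as well.
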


\begin{Remark}\label{CGO harmonic supported in pM_(sgn(tau))}{\rm
Modifying the above arguments in appropriate places, one can construct complex geometrical optics solutions whose traces are supported in $\p M_{\sgn(\tau)}$ if $\p M_{\rm tan}$ has zero measure in $\p M$. Let us indicate these modifications. Up to \eqref{def of f} everything is same except that we do not put any restrictions on $b$, so that we do not require $a$ to vanish on any part of the boundary. In order to ensure that $\supp(\mathtt{tr}(u))\subset\p M_{\sgn(\tau)}$, for fixed $\delta>0$, we set
$$
f^-_{\tau,\delta}:=-a.
$$
Since $\|f_{\tau,\delta}^-\|_{L^\infty(S^-_\tau)}\lesim 1$ and $\sigma_{\p M}(\p M_{\rm tan})=0$, we obtain the following estimates
$$
\|f^-_{\tau,\delta}\|_{L^2(S^-_{\tau,\delta})}\lesim \sigma_{\p M}(S^-_{\tau,\delta})
$$
and
\begin{multline*}
\|f^-_{\tau,\delta}\|_{L^2(S^0_{\tau,\delta})}\lesim \sigma_{\p M}\(\{x\in \p M:-\delta<\sgn(\tau)\p_\nu \varphi(x)<0\}\)\\
+\sigma_{\p M}\(\{x\in \p M:0<\sgn(\tau)\p_\nu \varphi(x)<(3|\tau|)^{-1}\}\).
\end{multline*}
We use Proposition~\ref{solvability result giving R_tau} to solve \eqref{equation for r_0} for $r_0$ with $\mathtt{tr}(r_0)|_{S^-_\tau}=f^-_{\tau,\delta}$ and to show that $r_0$ satisfies the same estimate as before for some $C_0>0$ constant:
$$
\|r_0\|_{L^2(M)}\le C_0\(\frac{1}{|\tau|}+\frac{1}{(\delta |\tau|)^{1/2}}+o_{\tau\to\infty}(1)+o_{\delta\to 0}(1)\).
$$
Thus, we have constructed the complex geometrical optics solution $u_0\in H_{\Delta_g}(M)$ to $(-\Delta_g)u_0=0$ of the form
$$
u_0=e^{-\tau x_1}(a+r_0)
$$
whose trace is supported in $\p M_{\sgn(\tau)}$ and $\|r_0\|_{L^2(M)}\to 0$ as $\tau\to\infty$.
}\end{Remark}

\subsubsection{\bf Construction for general $q$} Next, we construct complex geometrical optics solutions for the Schr\"odinger equation $(-\Delta_g+q)u=0$ in $M$ with $q\in L^\infty(M)$ such that $\supp(\mathtt{tr}(u))\subset\Gamma_{\sgn(\tau)}$.

\begin{Proposition}\label{CGO Schrodinger}
Let $(M,g)$ be an admissible manifold and let $q\in L^\infty(M)$ be such that $0$ is not a Dirichlet eigenvalue of $-\Delta_g+q$ in $M$. Suppose that $\gamma:[0,T]\to M_0$ is a given nontangential geodesic in $(M_0,g_0)$. For any $\tau\in\R$ with $|\tau|\ge \tau_0$, there is a solution $u\in L^2(M)$ to the equation $(-\Delta_g+q)u=0$ of the form
$$
u=u_0+e^{-\tau x_1}r_1,
$$
where $u_0$ is as in Proposition~\ref{CGO harmonic} and one has
$$
(\id+G_\tau\circ q)r_1=-G_\tau q e^{\tau x_1}u_0,
$$
and $\|r_1\|_{L^2(M)}\lesim \frac{1}{|\tau|}$ as $\tau\to\infty$. Moreover, $\mathtt{tr}(u)$ is supported in $\Gamma_{\sgn(\tau)}$.
\end{Proposition}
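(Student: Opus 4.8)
The plan is to obtain $u$ as a perturbation of the harmonic complex geometrical optics solution $u_0$ from Proposition~\ref{CGO harmonic}, by solving a Neumann-series equation built from the Green's operator $G_\tau$ of Theorem~\ref{existence of G_tau operator}.

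First I would use the bound $\|G_\tau\|_{L^2(M)\to L^2(M)}\le C_0/|\tau|$ from Theorem~\ref{existence of G_tau operator}: after enlarging $\tau_0$ we get $\|G_\tau\circ q\|_{L^2(M)\to L^2(M)}\le\|q\|_{L^\infty(M)}C_0/|\tau|\le\tfrac12$ for $|\tau|\ge\tau_0$, so $\id+G_\tau\circ q$ is invertible on $L^2(M)$ with the inverse of norm at most $2$. Since $u_0=e^{-\tau x_1}(a+r_0)\in H_{\Delta_g}(M)$, the function $q e^{\tau x_1}u_0=q(a+r_0)$ lies in $L^2(M)$, and I would \emph{define}
$$
r_1:=-(\id+G_\tau\circ q)^{-1}\bigl(G_\tau q e^{\tau x_1}u_0\bigr),
$$
which by construction solves $(\id+G_\tau\circ q)r_1=-G_\tau q e^{\tau x_1}u_0$. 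The estimate $\|r_1\|_{L^2(M)}\lesim|\tau|^{-1}$ then follows since $\|a\|_{L^2(M)}\lesim1$ (the modulus of $a$ does not depend on $\tau$) and $\|r_0\|_{L^2(M)}\to0$ by Proposition~\ref{CGO harmonic}, so that $\|q e^{\tau x_1}u_0\|_{L^2(M)}\lesim1$ and $\|r_1\|_{L^2(M)}\le 2\|G_\tau\|_{L^2\to L^2}\,\|q e^{\tau x_1}u_0\|_{L^2(M)}\lesim|\tau|^{-1}$.

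Next I would verify the equation. Rewriting the identity for $r_1$ as $r_1=-G_\tau q(r_1+e^{\tau x_1}u_0)=G_\tau v$ with $v:=-q(r_1+e^{\tau x_1}u_0)\in L^2(M)$, the mapping property $G_\tau:L^2(M)\to e^{\tau x_1}H_{\Delta_g}(M)$ shows that $e^{-\tau x_1}r_1\in H_{\Delta_g}(M)$, hence $u=u_0+e^{-\tau x_1}r_1\in H_{\Delta_g}(M)$ has a well-defined trace. Applying $e^{\tau x_1}(-\Delta_g)e^{-\tau x_1}$ to $r_1=G_\tau v$ and using $e^{\tau x_1}(-\Delta_g)e^{-\tau x_1}G_\tau=\id$ from Theorem~\ref{existence of G_tau operator} gives
$$
(-\Delta_g)(e^{-\tau x_1}r_1)=e^{-\tau x_1}v=-q e^{-\tau x_1}r_1-q u_0,
$$
and adding $(-\Delta_g)u_0=0$ together with $q u=q u_0+q e^{-\tau x_1}r_1$ makes every term cancel, so $(-\Delta_g+q)u=0$.

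Finally, for the support statement I would use that $r_1=G_\tau v$ with $v\in L^2(M)$, so by the last assertion of Theorem~\ref{existence of G_tau operator} the trace $\mathtt{tr}(G_\tau v)$ is supported in $S^+_\tau$; since $e^{-\tau x_1}$ is nowhere zero, $\supp\mathtt{tr}(e^{-\tau x_1}r_1)\subset S^+_\tau$. From $S^+_\tau=\{x\in\p M:\sgn(\tau)\p_\nu\varphi(x)\ge|3\tau|^{-1}\}$ we get $\sgn(\tau)\p_\nu\varphi>0$ on $S^+_\tau$, i.e.\ $S^+_\tau\subset\p M_{\sgn(\tau)}\subset\Gamma_{\sgn(\tau)}$, and combined with $\supp\mathtt{tr}(u_0)\subset\Gamma_{\sgn(\tau)}$ from Proposition~\ref{CGO harmonic} this yields $\supp\mathtt{tr}(u)\subset\Gamma_{\sgn(\tau)}$. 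The whole argument is essentially bookkeeping once Theorem~\ref{existence of G_tau operator} is available; the one point that requires care is this last step, where it is essential to recognize $r_1$ as an element of the \emph{range} of $G_\tau$ (rather than merely as a solution of a conjugated equation) so that the support conclusion of Theorem~\ref{existence of G_tau operator} applies, together with the elementary inclusion $S^+_\tau\subset\p M_{\sgn(\tau)}$.
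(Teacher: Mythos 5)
Your proposal is correct and follows essentially the same route as the paper's proof: invert $\id+G_\tau\circ q$ by a Neumann series using $\|G_\tau\|_{L^2\to L^2}\lesssim|\tau|^{-1}$, verify $(-\Delta_g+q)u=0$ from the identity $e^{\tau x_1}(-\Delta_g)e^{-\tau x_1}G_\tau=\id$, and obtain the trace support from the $S^+_\tau$-support property in Theorem~\ref{existence of G_tau operator}. Your rewriting of $r_1$ as $G_\tau v$ with $v=-q(r_1+e^{\tau x_1}u_0)$ and the explicit inclusion $S^+_\tau\subset\p M_{\sgn(\tau)}\subset\Gamma_{\sgn(\tau)}$ merely spell out what the paper leaves implicit in its final step.
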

\begin{proof}
Consider the following integral equation
\begin{equation}\label{integral equation}
(\id+G_\tau \circ q)r_1=-G_\tau qe^{\tau x_1}u_0.
\end{equation}
Since $q\in L^\infty(M)$ and $\|G_\tau\|_{L^2(M)\to L^2(M)}\lesim \frac{1}{|\tau|}$ by Theorem~\ref{existence of G_tau operator}, for sufficiently large this integral equation has a unique solution $r_1=-(\id+G_\tau \circ q)^{-1}G_\tau qe^{\tau x_1}u_0$ in terms of the convergent Neumann series. Then $\|G_\tau\|_{L^2(M)\to L^2(M)}\lesim \frac{1}{|\tau|}$ implies that $\|r_1\|_{L^2(M)}\lesim \frac{1}{|\tau|}$. Using the fact that $(-\Delta_g)e^{-\tau x_1}G_\tau=e^{-\tau x_1}$ (by Theorem~\ref{existence of G_tau operator}) and that $(-\Delta_g)u_0=0$, and using \eqref{integral equation}, we can show
\begin{align*}
(-\Delta_g+q)u&=(-\Delta_g+q)u_0+(-\Delta_g+q)e^{-\tau x_1}r_1\\
&=qu_0+(-\Delta_g+q)(-e^{-\tau x_1}G_\tau qr_1-e^{-\tau x_1}G_\tau qe^{\tau x_1}u_0)\\
&=qu_0-e^{-\tau x_1}qr_1-qu_0-qe^{-\tau x_1}G_\tau qr_1-qe^{-\tau x_1}G_\tau qe^{\tau x_1}u_0\\
&=-e^{-\tau x_1}q(\id+G_\tau \circ q)r_1-qe^{-\tau x_1}G_\tau qe^{\tau x_1}u_0\\
&=e^{-\tau x_1}qG_\tau q e^{\tau x_1}u_0-qe^{-\tau x_1}G_\tau qe^{\tau x_1}u_0\\
&=0.
\end{align*}

Let us now prove the last part of the proposition. By Proposition~\ref{CGO harmonic}, we have that $\mathtt{tr}(u_0)$ is supported in $\Gamma_{\sgn(\tau)}$. Note that Theorem~\ref{existence of G_tau operator} implies that $\mathtt{tr}(G_\tau q r_1)$ is supported in $S^+_\tau$. These, together with \eqref{integral equation} imply that the trace of $u=u_0+e^{-\tau x_1}r_1$ is supported in $\Gamma_{\sgn(\tau)}$.
\end{proof}

\begin{Remark}\label{CGO Schrodinger supported in pM_(sgn(tau))}{\rm
If $\p M_{\rm tan}$ has zero measure in $\p M$, one can replace $u_0$ in the above proposition with the one obtained in Remark~\ref{CGO harmonic supported in pM_(sgn(tau))}. Then the proof of Proposition~\ref{CGO Schrodinger} shows that so-obtained complex geometrical optics solution $u\in H_{\Delta_g}(M)$ to $(-\Delta_g+q)u_0=0$ of the form
$$
u=u_0+e^{-\tau x_1}r_1
$$
has $\supp(\mathtt{tr}(u))\subset \p M_{\sgn(\tau)}$ and $\|r_1\|_{L^2(M)}\lesim \frac{1}{|\tau|}$ as $\tau\to\infty$.
}\end{Remark}

\section{Proofs of the main results}\label{S8}

\begin{proof}[Proof of Theorem~\ref{main th}]Suppose that $q\in C(M)$ such that $0$ is not a Dirichlet eigenvalue of $-\Delta_g+q$.  Assume the knowledge of $(M,g)$ and $\Lambda_{g,q}f$ on $\Gamma_-$ for all $f\in \mathcal H(\p M)$ supported in $\Gamma_+$. Then by Proposition~\ref{extending difference of DN maps}, the following integral identity holds
\begin{equation}\label{rewritten integral identity}
\<\mathtt{tr}(u_2),(\Lambda_{g,q}-\Lambda_{g,0})\mathtt{tr}(u_1)\>_{H^{-1/2,1/2}(\p M)}=(u_2|qu_1)_{L^2(M)},
\end{equation}
where $u_1\in H_{\Delta_g}(M)$ is a solution of $(-\Delta_g+q)u_1=0$ in $M$ with $\mathtt{tr}(u_1)$ supported in $\Gamma_+$, and $u_2\in H_{\Delta_g}(M)$ is a solution of $(-\Delta_g)u_2=0$ in $M$ with $\mathtt{tr}(u_2)$ supported in $\Gamma_-$.
\medskip

Let $\tau\ge\tau_0$. By Proposition~\ref{CGO Schrodinger}, there is $u_1\in H_{\Delta_{g}}(M)$ solving $(-\Delta_{g}+q)u_1=0$ in $M$ with $\mathtt{tr}(u_1)$ supported in $\Gamma_+$, and having the form
$$
u_1=e^{-\tau x_1}(e^{-i\tau r}|g|^{-1/4}c^{1/2}e^{i\lambda(x_1+ir)}b(\theta)+r'+r_0)=u_1'+e^{-\tau x_1}r_0
$$
where $\|r_0\|_{L^2(M)}\lesim \frac{1}{|\tau|}$ and $\|r'\|_{L^2(M)}\to 0$ as $\tau\to+\infty$ (here $u'_1$ is a solution to $(-\Delta_{g})u_1'=0$ as in Proposition~\ref{CGO harmonic}).
\medskip

By Proposition~\ref{CGO harmonic}, there is a $\overline{u}_2\in H_{\Delta_{g}}(M)$ solving $(-\Delta_g)\overline{u}_2=0$ in $M$ with $\mathtt{tr}({\overline u}_2)$ supported in $\Gamma_-$, and having the form
$$
\overline u_2=e^{\tau x_1}(e^{i\tau r}|g|^{-1/4}c^{1/2}e^{i\lambda(x_1+ir)}+r'')
$$
where $\|r''\|_{L^2(M)}\to 0$ as $\tau\to+\infty$. Then $u_2$ will be the complex geometrical optics solutions to $(-\Delta_{g}) u_2=0$ in $M$ with $\mathtt{tr}(u_2)$ supported in $\Gamma_-$, and having the form
$$
u_2=e^{\tau x_1}(e^{-i\tau r}|g|^{-1/4}c^{1/2}e^{-i\lambda(x_1-ir)}+\overline{r''})
$$

The important thing to note is that $u'_1$ as well as $u_2$ depend only on $(M,g)$, i.e. independent on $q$. Since $\mathtt{tr}(u_1)$ is supported in $\Gamma_+$ and $\mathtt{tr}(u_2)$ is supported in $\Gamma_-$, the left hand-side of \eqref{rewritten integral identity} requires only the given partial data of $\Lambda_{g,q}$.
\medskip

Now, we show that $\mathtt{tr}(u_1)$ can be reconstructed from the above mentioned partial knowledge of $\Lambda_{g,q}$. By Proposition~\ref{CGO Schrodinger} and Proposition~\ref{solvability results are equivalent}, one can check that $\mathtt{tr}(u_1)$ satisfies the following boundary integral equation
$$
(\id+S_\tau(\Lambda_{g,q}-\Lambda_{g,0}))\mathtt{tr}(u_1)=\mathtt{tr}(u_1').
$$
Then Corollary~\ref{corollary of solvability results are equivalent} and Proposition~\ref{solvability of boundary integral equation} imply solvability of the above boundary integral equation for sufficiently large $\tau$. Substituting this solution $\mathtt{tr}(u_1)$ into the left hand-side of \eqref{rewritten integral identity}, we can determine
$$
(u_2|qu_1)_{L^2(M)}
$$
for all complex geometrical optics solutions $u_1,u_2$ of the above form.
\medskip

Using the decay properties of $r',r'',\tilde r$ and taking limit as $\tau\to \infty$, we can reconstruct
$$
\int_M cq|g|^{-1/2}e^{2i\lambda(x_1+ir)}b(\theta)\,d\Vol_g.
$$
Now we extend $q$ as zero to $\R\times M_0$. Since $d\Vol_g=|g|^{1/2}dx_1\,dr\,d\theta$, the above expression becomes
$$
\int_{S^{n-2}}\int_0^\infty e^{-2\lambda r}\(\int_{-\infty}^\infty e^{2i\lambda x_1}(cq)(x_1,r,\theta)\,dx_1\)\,dr\,d\theta.
$$
Varying $b\in C^\infty(S^{n-2})$ so that the support of $b$ is sufficiently close to $\theta_0$ and noting that the term in the brackets is the one-dimensional Fourier transform of $q$ with respect to the $x_1$-variable, which we denote by $\widehat{q}$, we determine
$$
\int_0^\infty e^{-2\lambda r}\,\widehat{(cq)}(2\lambda,r,\theta_0)\,dr.
$$
Recalling that $r\mapsto (r,\theta_0)$ corresponds to the given nontangential geodesic $\gamma:[0,T]\to M$, we finish the proof.
\end{proof}

\begin{proof}[Proof of Theorem~\ref{main th2}]
Assume that $O\subset M_0$ is open such that $O\cap \p M_0\subset E$ and the local geodesic ray transform is invertible on $O$. According to Theorem~\ref{main th}, we can constructively determine
\begin{equation}\label{attenuated ray transform}
\int_0^T e^{-2\lambda t}\widehat{(cq)}(2\lambda,\gamma(t))\,dt
\end{equation}
for all nontangential geodesics $\gamma:[0,T]\to O$ with $\gamma(0),\gamma(T)\in E$. This is the local attenuated geodesic ray transform of $\widehat{(cq)}(2\lambda,\cdot)$ in $O$, with attenuation $-2\lambda$. Setting $\lambda=0$, we determine an unattenuated local geodesic ray transform of $\widehat{(cq)}(0,\cdot)$ in $O$. Then using the constructive invertibility assumption for the local geodesic ray transform, we recover $\widehat{(cq)}(0,\cdot)$ in $O$.
\medskip

Now, we go back to \eqref{attenuated ray transform} and differentiate it with respect to $\lambda$ at $\lambda=0$. Since we have reconstructed $\widehat{(cq)}(0,\cdot)$, we constructively determine the local geodesic ray transform of $\(\de{}{\lambda}\widehat{(cq)}\)(0,\cdot)$ in $O$. Using the invertibility assumption for the local geodesic ray transform again, we obtain $\(\de{}{\lambda}\widehat{(cq)}\)(0,\cdot)$ in $O$.
\medskip

Using this argument iteratively by taking higher derivatives of \eqref{attenuated ray transform} with respect to $\lambda$, we can reconstruct
$$
\(\de{{}^k}{\lambda^k}\widehat{(cq)}\)(0,\cdot)\,\text{ in }\,O\,\text{ for all integers }\,k\ge 0.
$$
Since $q$ is compactly supported in $x_1$-variable, its Fourier transform $\widehat{(cq)}(\lambda,\cdot)$ is analytic with respect to $\lambda$. Therefore, we have reconstructed the Taylor series expansion of $\widehat{(cq)}(\lambda,\cdot)$ in $O$. Then we determine $q$ in $M\cap(\R\times O)$ by inverting the one-dimensional Fourier transform of $cq$ with respect to the $x_1$-variable.
\end{proof}

\begin{proof}[Proof of Theorem~\ref{main th3}]
Let $(M,g)$ be a known admissible manifold such that $\p M_{\rm tan}$ is of measure zero in $\p M$. Suppose that $q\in C(M)$ such that $0$ is not a Dirichlet eigenvalue of $-\Delta_g+q$.  Assume the knowledge of $\Lambda_{g,q}f$ on $\p M_-$ for all $f\in \mathcal H(\p M)$ supported in $\p M_+$.
\medskip

Using Remark~\ref{CGO harmonic supported in pM_(sgn(tau))} and Remark~\ref{CGO Schrodinger supported in pM_(sgn(tau))}, as in the proof of Theorem~\ref{main th}, we can construct $u_1\in H_{\Delta_{g}}(M)$ and $u_2\in H_{\Delta_{g}}(M)$ solving $(-\Delta_{g}+q)u_1=0$ in $M$ with $\mathtt{tr}(u_1)$ supported in $\p M_+$ and solving $(-\Delta_{g}) u_2=0$ in $M$ with $\mathtt{tr}(u_2)$ supported in $\p M_-$, respectively, and having the forms
\begin{align*}
u_1&=e^{-\tau x_1}(e^{-i\tau r}|g|^{-1/4}c^{1/2}e^{i\lambda(x_1+ir)}b(\theta)+r'+r_0)=u_1'+e^{-\tau x_1}r_0,\\
u_2&=e^{\tau x_1}(e^{-i\tau r}|g|^{-1/4}c^{1/2}e^{-i\lambda(x_1-ir)}+\overline{r''}),
\end{align*}
where $\|r_0\|_{L^2(M)}\lesim \frac{1}{|\tau|}$, $\|r'\|_{L^2(M)}\to 0$ and $\|r''\|_{L^2(M)}\to 0$ as $\tau\to+\infty$ (here $u'_1$ is a solution to $(-\Delta_{g})u_1'=0$ as in Remark~\ref{CGO harmonic supported in pM_(sgn(tau))}).
\medskip

Continuing as in the proof of Theorem~\ref{main th}, but replacing $\Gamma_\pm$ by $\p M_\pm$, for any $\lambda\in\R$, we can constructively determine
$$
\int_0^T e^{-2\lambda t}\widehat{(cq)}(2\lambda,\gamma(t))\,dt
$$
for all the nontangential geodesics $\gamma:[0,T]\to M_0$ in $(M_0,g_0)$. Using the constructive invertibility assumption for the global geodesic ray transform, we reconstruct $q$ in $M$ via similar steps as in the proof of Theorem~\ref{main th2}.
\end{proof}

\section*{Acknowledgements}
The author thanks his advisor Professor Gunther Uhlmann for all his encouragement and support. Many thanks to Professor Mikko Salo 
very useful comments. The work of the author was partially supported by the National Science Foundation.


\begin{thebibliography}{ABC}
\bibitem{BU} A. Bukhgeim, G. Uhlmann, \emph{Recovering a potential from partial Cauchy data}, Comm. Partial Diff. Eq. {\bf 27} (2002), no. 3-4, 653--668.

\bibitem{Cal} A. Calder\'on, \emph{On an inverse boundary value problem}, Seminar on Numerical Analysis and its Applications to Continuum Physics, 65--73, Soc. Brasil. Mat., Rio de Janeiro, 1980.

\bibitem{DKSU} D. Dos Santos Ferreira, C. Kenig, M. Salo, G. Uhlmann, \emph{Limiting Carleman weights and anisotropic inverse problems}, Invent. Math. {\bf 178} (2009), no. 1, 119--171.

\bibitem{DKS} D. Dos Santos Ferreira, C. Kenig, M. Salo, \emph{Determining an unbounded potential from Cauchy data in admissible geometries}, Comm. PDE {\bf 38} (2013), no. 1, 50--68.

\bibitem{GTr} D. Gilbarg, N. S. Trudinger, \emph{Elliptic partial differential equations of second order}, Third printing, Springer-Verlag, 200.

\bibitem{Is} V. Isakov, \emph{On uniqueness in the inverse conductivity problem with local data}, Inverse Probl. Imaging {\bf 1} (2007), no. 1, 95--105.

\bibitem{KSa-survey} C. Kenig, M. Salo, \emph{Recent progress in the Calder\'on problem with partial data}, Contemp. Math. {\bf 615} (2014), 193--222.

\bibitem{KSa} C. Kenig, M. Salo, \emph{The Calder\'on problem with partial data on manifolds and applications}, Analysis \& PDE {\bf 6} (2013), no. 8, 2003--2048.

\bibitem{KSaU} C. Kenig, M. Salo, G. Uhlmann, \emph{Reconstructions from boundary measurements on admissible manifolds}, Inverse Probl. Imaging {\bf 5} (2011), no. 4, 859--877.

\bibitem{KSU} C. Kenig, J. Sj\"ostrand, G. Uhlmann, \emph{The Calder\'on problem with partial data}, Ann. of Math. (2) {\bf 165} (2007), no. 2, 567--591.

\bibitem{Kr} V. Krishnan, \emph{A generalization of inversion formulas of Pestov and Uhlmann}, J. Inv. Ill-Posed Problems, {\bf 18} (2010), 401--408.

\bibitem{LM}  J.-L. Lions and E. Magenes, \emph{Probl\`emes aux limites non homog\`enes et applications. Vol. 1}, Travaux et Recherches Math\'ematiques, No. 17, Dunod, Paris, 1968. MR MR0247243 (40 \#512).

\bibitem{LU} J. Lee, G. Uhlmann, \emph{Determining anisotropic real-analtic conductivities by boundary measurements}, Comm. Pure Appl. Math., {\bf 42} (1989), no. 8, 1097--1112.

\bibitem{Na} A. Nachman, \emph{Reconstructions from boundary measurements}, Ann. Math. {\bf 128} (1988), 531--576.

\bibitem{NaS} A. Nachman, B. Street, \emph{Reconstruction in the Calder\'on Problem with Partial Data}, Comm. PDE {\bf 35} (2010), 375--390.

\bibitem{No} R.G. Novikov, \emph{Multidimensional inverse spectral problem for the equation $-\Delta\psi+(v(x)-Eu(x))\psi=0$}, Funct. Anal. Appl. {\bf 22} (1988), 263--272.

\bibitem{PeU} L. Pestov, G. Uhlmann, {\it On the Characterization of the Range and Inversion Formulas for the Geodesic X-Ray Transform}, International Math. Research Notices, \textbf{80} (2004), 4331--4347.

\bibitem{SaU} M. Salo, G. Uhlmann, \emph{The attenuated ray transform on simple surfaces}, J. Diff. Geom. {\bf 88} (2011), no. 1, 161--187.

\bibitem{Shar} V. A. Sharafutdinov, \emph{Integral geometry of tensor fields}, Inverse and Ill-Posed Problems Series, VSP, Utrecht, 1994.

\bibitem{SyU} J. Sylvester, G. Uhlmann, \emph{ A global uniqueness theorem for an inverse boundary value problem}, Ann. of Math. (2) {\bf 125} (1987), no. 1, 153--169.

\bibitem{Tay} M. E. Taylor, \emph{Partial Differential Equations III. Nonlinear Equations}, Applied Mathematical Sciences, 117. Springer, New York, 2011.

\bibitem{U} G. Uhlmann, \emph{Inverse problems: seeing the unseen}, Bull. Math. Sci. {\bf 4} (2014), no. 2, 209--279.

\bibitem{UV} G. Uhlmann, A. Vasy, \emph{The inverse problem for the local geodesic ray transform, Inventiones mathematicae}, DOI: 10.1007/s00222-015-0631-7.

\end{thebibliography}
\end{document}